\newcommand{\HH}{\mathrm{H}}
\newcommand{\OO}{\mathcal{O}}
\DeclareMathOperator{\dRc}{dR,c}
\DeclareMathOperator{\JL}{JL}
\DeclareMathOperator{\fin}{fin}
\DeclareMathOperator{\Hom}{Hom}
\DeclareMathOperator{\LT}{LT}
\DeclareMathOperator{\Dr}{Dr}
\DeclareMathOperator{\Mod}{\mathbf{Mod}}
\DeclareMathOperator{\sm}{sm}
\DeclareMathOperator{\Gal}{Gal}
\DeclareMathOperator{\Nrd}{Nrd}
\DeclareMathOperator{\Char}{Char}
\DeclareMathOperator{\PicCon}{PicCon}
\DeclareMathOperator{\VectCon}{\mathbf{VectCon}}
\DeclareMathOperator{\Vect}{\mathbf{Vect}}
\DeclareMathOperator{\Coh}{\mathbf{Coh}}
\DeclareMathOperator{\Rep}{Rep}
\DeclareMathOperator{\End}{End}
\DeclareMathOperator{\reg}{reg}
\DeclareMathOperator{\ellip}{ell}
\DeclareMathOperator{\Irr}{Irr}
\DeclareMathOperator{\GL}{GL}
\DeclareMathOperator{\ab}{ab}
\DeclareMathOperator{\SL}{SL}
\DeclareMathOperator{\Aut}{Aut}
\DeclareMathOperator{\Stab}{Stab}
\DeclareMathOperator{\Spec}{Spec}
\DeclareMathOperator{\ESI}{ESI}
\DeclareMathOperator{\Der}{Der}
\newcommand{\colim@}[2]{%
  \vtop{\m@th\ialign{##\cr
    \hfil$#1\operator@font colim$\hfil\cr
    \noalign{\nointerlineskip\kern1.5\ex@}#2\cr
    \noalign{\nointerlineskip\kern-\ex@}\cr}}%
}
\newcommand{\colim}{%
  \mathop{\mathpalette\colim@{\rightarrowfill@\textstyle}}\nmlimits@
}
\newcommand{\bA}{{\mathbb A}}
\newcommand{\bD}{{\mathbb D}}
\newcommand{\bP}{{\mathbb P}}
\newcommand{\bQ}{{\mathbb Q}}
\newcommand{\bZ}{{\mathbb Z}}
\newcommand{\sA}{{\mathcal A}}
\newcommand{\sB}{{\mathcal B}}
\newcommand{\sC}{{\mathcal C}}
\newcommand{\sD}{{\mathcal D}}
\newcommand{\sF}{{\mathcal F}}
\newcommand{\sG}{{\mathcal G}}
\newcommand{\sL}{{\mathcal L}}
\newcommand{\sM}{{\mathcal M}}
\newcommand{\sN}{{\mathcal N}}
\newcommand{\sR}{{\mathcal R}}
\newcommand{\sS}{{\mathcal S}}
\newcommand{\sT}{{\mathcal T}}
\newcommand{\sV}{{\mathcal V}}
\newcommand{\sY}{{\mathcal Y}}
\newcommand{\sW}{{\mathcal W}}
\newcommand{\fo}{{\mathfrak o}}
\newcommand{\fO}{{\mathfrak O}}
\newtheorem{thm}{Theorem}
\numberwithin{thm}{section}
\newtheorem{lemma}[thm]{Lemma} 
\newtheorem{prop}[thm]{Proposition} 
\newtheorem{cor}[thm]{Corollary} 
\newtheorem*{theoremA}{Theorem A}
\newtheorem*{theoremB}{Theorem B}
\newtheorem*{theoremC}{Theorem C}
\theoremstyle{definition}
\newtheorem{defn}[thm]{Definition}
\newtheorem{remark}[thm]{Remark}
\title{The Categories of Lubin-Tate and Drinfeld Bundles}
\date{\today}
\author{James Taylor}
\email{james.taylor@math.unipd.it}
\address{Dipartimento di Matematica, Universit\`{a} degli Studi di Padova, Via Trieste 63, 35131 Padova, Italy}
\subjclass[2020]{14F10, 14G22, 14G32.}
\keywords{Equivariant Vector Bundles, $\sD$-modules, Drinfeld tower.}
\begin{document}
\begin{abstract}
    For a finite extension $F$ of $\bQ_p$ and $n \geq 1$, we show that the category of Lubin-Tate bundles on the $(n-1)$-dimensional Drinfeld symmetric space is equivalent to the category of finite-dimensional smooth representations of the group of units of the division algebra of invariant $1/n$ over $F$.
\end{abstract}
\maketitle

\setcounter{tocdepth}{1}
\tableofcontents

\section{Introduction}

Let $p$ be a prime, let $F$ be a finite extension of $\bQ_p$, and let $n \geq 1$. The \emph{Drinfeld symmetric space} $\Omega$ of dimension $(n-1)$ is the rigid analytic space defined by removing from $\bP^{n-1}$ all $F$-rational hyperplanes, which carries a natural action of $G \coloneqq \GL_n(F)$ by restricting the natural action of $G$ on $\bP^{n-1}$. One reason to study $\Omega$ is that $\Omega$ is a natural source of interesting representations of $G$.

One such source of representations of $G$ is the category $\Vect^G(\Omega)$ of $G$-equivariant vector bundles on $\Omega$, and to this end different classes of equivariant vector bundles on $\Omega$ have been studied by many different authors \cite{ORL, LINDEN, JUNDR, KOH, AW, AW2, TAY2, TAY3}.

\subsection{Lubin-Tate Bundles}
Suppose from now on that $K$ is a complete extension of $\breve{F}$, the completion of the maximal unramified extension of $F$, and let us consider $\Omega$ as a rigid space over $K$. This paper concerns the full subcategory $\Vect^G_{\LT}(\Omega) \subset \Vect^G(\Omega)$ of \emph{Lubin-Tate bundles} introduced by Kohlhaase \cite{KOH}. Concretely, an object $\sV \in \Vect^G(\Omega)$ is Lubin-Tate if for some $m \geq 1$ the $\OO(\sN_m)$-module
\[
\OO(\sN_m) \otimes_{\OO(\Omega)} \sV(\Omega)
\]
is generated by $G_m$-invariant elements, where $\sN_m$ is the level $m$ Drinfeld covering of $\Omega$ of finite degree and $G_m \coloneqq 1 + \pi^m M_n(\OO_F)$ is the $m$th congruence subgroup of $G$.

From the construction of the category $\Vect^G_{\LT}(\Omega)$, there is an equivalence
\[
\bD_{\Dr} \colon \Vect^H_{\Dr}(\bP^{n-1}) \xrightarrow{\sim} \Vect^G_{\LT}(\Omega)
\] 
where $\Vect^H_{\Dr}(\bP^{n-1}) \subset \Vect^H(\bP^{n-1})$ is the full subcategory of \emph{Drinfeld bundles} on $\bP^{n-1}$ and $H$ is the group of units of the division algebra over $F$ of invariant $1/n$. This equivalence allows one to translate problems regarding $G$-equivariant vector bundles on $\Omega$ to problems regarding $H$-equivariant vector bundles on $\bP^{n-1}$. For example, Kohlhaase defined a fully faithful inclusion functor
\[
\Psi_H \coloneqq \bD_{\Dr}(\OO_{\bP^{n-1}} \otimes_K -) \colon \Rep_{K}^{\sm}(H) \rightarrow \Vect^G_{\LT}(\Omega),
\]
from the category of finite-dimensional smooth representations of $H$ over $K$. This was used by Dospinescu and Le Bras to translate problems about $\Psi_H(V)$ into problems about $\OO_{\bP^{n-1}} \otimes_K V$ in their proof \cite[\S 10]{DLB} that the locally analytic $\GL_2(\bQ_p)$-representation $\Gamma(\Omega, \Psi_H(V))^*$ is coadmissible whenever $V$ is irreducible and $\dim(V) > 1$.

The functor $\Psi_H$ provides examples of objects of $\Vect^G_{\LT}(\Omega)$, but a priori $\Vect^G_{\LT}(\Omega)$ is much larger. Our main result is that, somewhat surprisingly, $\Psi_H$ is actually an equivalence.

\begin{theoremA}
    The functor
    \[
    \Psi_H \colon \Rep_{K}^{\sm}(H) \rightarrow \Vect^G_{\LT}(\Omega)
    \]
    is an equivalence of categories.
\end{theoremA}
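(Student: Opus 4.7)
The plan is to reduce the statement via the equivalence $\bD_{\Dr}$ and then prove essential surjectivity of $\Psi_H$ by extracting a smooth $H$-representation from any given Drinfeld bundle. Since $\bD_{\Dr}$ is an equivalence and (by Kohlhaase) $\Psi_H$ is fully faithful, what remains is to show that every Drinfeld bundle $\sW$ on $\bP^{n-1}$ is $H$-equivariantly isomorphic to a constant bundle $\OO_{\bP^{n-1}} \otimes_K V$ for some finite-dimensional smooth $H$-representation $V$ over $K$.

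The natural candidate is $V \coloneqq \Gamma(\bP^{n-1}, \sW)$, which is finite-dimensional over $K$ by properness of $\bP^{n-1}$ and inherits an $H$-action from the equivariance of $\sW$. There is a tautological $H$-equivariant evaluation map $\mathrm{ev} \colon \OO_{\bP^{n-1}} \otimes_K V \to \sW$, and the task is to show that $\mathrm{ev}$ is an isomorphism and that the $H$-action on $V$ is smooth. Rather than working on $\bP^{n-1}$ directly, I would transfer the question to the Drinfeld side via $\bD_{\Dr}$: writing $\sV \coloneqq \bD_{\Dr}(\sW)$, the defining condition of a Lubin-Tate bundle provides, for some $m \geq 1$, a generating set of $G_m$-invariant elements in $\OO(\sN_m) \otimes_{\OO(\Omega)} \sV(\Omega)$. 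These invariants should assemble, under the compatibility encoded in $\bD_{\Dr}$, into a finite-dimensional $K$-subspace stable under $H$, from which one recovers $\sV \cong \Psi_H(V)$ and hence the desired isomorphism $\sW \cong \OO_{\bP^{n-1}} \otimes_K V$.

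The chief obstacle is promoting the Lubin-Tate condition --- that the module of global sections on $\sN_m$ is merely \emph{generated} by $G_m$-invariants --- into a global identification of $\sV$ with a constant bundle. Concretely, one must (i) extract a finite-dimensional space of invariants of the correct rank, (ii) verify that the resulting comparison map is an isomorphism of coherent sheaves rather than only a surjection after base change to $\sN_m$, and (iii) deduce smoothness of the $H$-action, which should follow from compatibility of the construction as $m$ varies, so that each vector in $V$ is stabilized by the $m$-th congruence subgroup of $\OO_D^\times$ for some $m$. I expect step (ii) to be the most subtle, as it requires turning ``generated by invariants on $\sN_m$'' into a rigidity statement for Lubin-Tate bundles on $\Omega$, presumably via descent along the tower $\sN_\infty \to \Omega$ together with the compactness of $\bP^{n-1}$ to control global sections.
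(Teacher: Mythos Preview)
Your reduction to essential surjectivity is correct, and your candidate $V = \Gamma(\bP^{n-1}, \sW)$ is even the right object a posteriori. But the proposal does not contain a mechanism for the step you yourself flag as most subtle, and in fact two specific ingredients are missing without which the argument cannot be completed along the lines you sketch.

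First, you need a way to pass from ``generated by $G_m$-invariants on $\sN_m$'' to an actual trivialisation. The paper does this by showing that $\OO_{\sN_m}$ is \emph{irreducible} as a $G_m$-equivariant $\OO_{\sN_m}$-module, which forces the injective comparison map to be an isomorphism of the right rank. The irreducibility is not formal: it comes from the fact that the infinitesimal action of $G_m$ generates the tangent sheaf on affinoids of $\sN_m$ (via $\log$ of elementary matrices), so any $G_m$-stable $\OO$-submodule is automatically a $\sD$-submodule, and $\OO$ is $\sD$-simple on a connected space. Your proposal mentions neither $\sD$-modules nor the infinitesimal action, and ``descent along $\sN_\infty \to \Omega$ together with compactness of $\bP^{n-1}$'' does not supply this rigidity. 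Moreover, even after one has the $G_0$-equivariant description, extending it to a $G$-equivariant one requires a second use of the $\sD$-module structure: the canonical connection on $\OO_{\sM_m} \otimes_{L_m} V$ is what allows one to recover $V$ as horizontal sections and thereby check that the full $G \times H$-action preserves it.

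Second, and more seriously, your outline gives no reason why the resulting representation should be a representation of $H$ alone rather than a semi-linear representation of $G \times H$ over the non-trivial coefficient ring $L_\infty$. What collapses the latter category to $\Rep_K^{\sm}(H)$ is a purely group-theoretic fact: the normal closure of $G_m$ in $G$ is $G^m = \det^{-1}(1+\pi^m\OO_F)$, because $\SL_n(F)$ has no proper open normal subgroups. This is what forces the $G$-action on the invariants to factor through $\det$, after which Galois descent along the Lubin--Tate extensions kills it entirely. Nothing in your sketch hints at this step, and without it you would at best recover the intermediate equivalence $\Rep_{L_\infty}^{\sm}(G \times H) \simeq \Vect^G_{\LT}(\Omega)$, not the theorem as stated. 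Note in particular the asymmetry the paper points out: the analogous functor $\Psi_G$ is \emph{not} an equivalence for $n \geq 2$, so compactness of $\bP^{n-1}$ alone cannot be doing the work.
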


In particular, any Drinfeld bundle on $\bP^{n-1}$ is of the form $\OO_{\bP^{n-1}} \otimes_K V$. We also note that there is an asymmetry here: the analogously defined functor $\Psi_G$ is not an equivalence when $n \geq 2$ (Remark \ref{rem:PsiGnotequivingeneral}).

The main idea behind the proof of Theorem A is an equivalent characterisation of the notation of a Lubin-Tate bundle (Lemma \ref{lem:equivdefs}), from which the fact that $\Psi_H$ is an equivalence becomes relatively formal (cf.\ Corollary \ref{cor:maincorLT} and Theorem \ref{thm:finalmainthmLT}). The main observation is that for any Lubin-Tate bundle $\sV$, the $G_m$-invariants and $G^m$-invariants of the $\OO(\sN_m)$-module $\OO(\sN_m) \otimes_{\OO(\Omega)} \sV(\Omega)$ considered above actually coincide, where $G^m = \det^{-1}(1 + \pi^m \OO_F)$, the normal closure of $G_m$ in $G$.

\subsection{$G^0$-Finite Bundles}

The functor $\Psi_H$ also lands in the full subcategory 
\[
    \Vect^G(\Omega)_{G^0\text{-}\fin} \subset \Vect^G(\Omega),
\]
of $G^0$-finite bundles, where $G^0 = \ker(\nu \circ \det \colon G \rightarrow \bZ)$ (see Definition \ref{defn:finiteobj}). In our previous work \cite{TAY3} we showed that the natural factorisation of $\Psi_H$ through the category $\VectCon^G(\Omega)$ of $G$-equivariant vector bundles with connection on $\Omega$ induces an equivalence
\[
\Psi_H^{\nabla} \colon \Rep_{K}^{\sm}(H) \xrightarrow{\sim} \VectCon^G(\Omega)_{G^0\text{-}\fin}
\]
to the analogously defined full subcategory $\VectCon^G(\Omega)_{G^0\text{-}\fin}$ of $\VectCon^G(\Omega)$.

Using some results we establish relating the infinitesimal action of $G$ and the action of the tangent sheaf on the covering spaces $\sN_m$ (Lemma \ref{lem:groupgeneratestangentsheaf}), we adapt the techniques from \cite{TAY3} to also show that $\Psi_H$ is also an equivalence onto $\Vect^G(\Omega)_{G^0\text{-}\fin}$.
\begin{theoremB}
    The functor $\Psi_H$ induces an equivalence of categories
    \[
        \Psi_H \colon \Rep_{K}^{\sm}(H) \xrightarrow{\sim} \Vect^G(\Omega)_{G^0\text{-}\fin}.
    \]
\end{theoremB}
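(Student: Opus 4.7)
The plan is to deduce Theorem~B from Theorem~A together with the main result of \cite{TAY3}. Since $\Vect^G(\Omega)_{G^0\text{-}\fin}$ is a full subcategory of $\Vect^G(\Omega)$ in which $\Psi_H$ is already fully faithful by Theorem~A, and the discussion preceding Theorem~B shows that $\Psi_H$ lands in $\Vect^G(\Omega)_{G^0\text{-}\fin}$, we automatically obtain full faithfulness and an inclusion $\Vect^G_{\LT}(\Omega) \subseteq \Vect^G(\Omega)_{G^0\text{-}\fin}$. Thus the actual content of Theorem~B is essential surjectivity, equivalently the reverse inclusion that every $G^0$-finite equivariant bundle is Lubin-Tate.

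The strategy is to invoke the connection equivalence $\Psi_H^{\nabla}$ of \cite{TAY3} via the factorisation
\[
\Psi_H \colon \Rep_K^{\sm}(H) \xrightarrow[\sim]{\Psi_H^{\nabla}} \VectCon^G(\Omega)_{G^0\text{-}\fin} \longrightarrow \Vect^G(\Omega)_{G^0\text{-}\fin},
\]
in which the second arrow is the forgetful functor. Since $\Psi_H^{\nabla}$ is an equivalence, essential surjectivity of $\Psi_H$ is equivalent to that of the forgetful functor: given $\sV \in \Vect^G(\Omega)_{G^0\text{-}\fin}$, it suffices to produce a $G$-equivariant integrable connection $\nabla$ on $\sV$, since then $(\sV,\nabla) \simeq \Psi_H^{\nabla}(V)$ for some $V \in \Rep_K^{\sm}(H)$ by \cite{TAY3}, and forgetting the connection identifies $\sV \simeq \Psi_H(V)$ as $G$-equivariant bundles.

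Constructing the connection is where the real work lies. I would adapt the techniques of \cite{TAY3} — which there rely on having the connection available from the outset — to the connection-free setting, using as key input the structural results established during the proof of Theorem~A (in particular, the description of the $G_m$-invariant sections of Lubin-Tate bundles on the Drinfeld coverings $\sN_m$). The idea is to exploit the finiteness of $\langle \sV|_{G^0}\rangle$ to obtain sufficient regularity of the $G_m$-action on local sections to legitimately differentiate, producing an action of the Lie algebra $\mathfrak{g}$ of $G$ on $\sV$ and hence a candidate connection $\nabla$; integrability and $G$-equivariance should then follow formally from the equivariance of the construction.

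The main obstacle is precisely extracting enough analytic regularity from the purely Tannakian finiteness hypothesis to carry out this differentiation — this is the step where the preexisting connection is used crucially in \cite{TAY3}, and replacing it by the new structural information from the proof of Theorem~A is the crux of the argument. Once this differentiation is justified, the rest of the proof should reduce to routine checks against the equivalence of \cite{TAY3}.
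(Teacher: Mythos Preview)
Your reduction to essential surjectivity of the forgetful functor
\[
\VectCon^G(\Omega)_{G^0\text{-}\fin} \longrightarrow \Vect^G(\Omega)_{G^0\text{-}\fin}
\]
is a detour the paper avoids, and your plan to establish this by ``differentiating the $G_m$-action'' has a genuine gap. For an arbitrary $\sV \in \Vect^G(\Omega)_{G^0\text{-}\fin}$ there is no a priori reason the operators $g^{\sV}_U$ are close enough to the identity for $\log(g^{\sV}_U)$ to converge; the Tannakian finiteness of $\langle \sV|_{G^0}\rangle$ gives no such analytic control. In the one place the paper does differentiate the group action on a bundle (Proposition~\ref{prop:esssurjvect}), it is only \emph{after} already knowing that $\sV \cong \OO \otimes V$ as a $G_0$-module, so that $G_m$ acts trivially on the $V$ factor and the logarithm reduces to $\log(g^{\OO}_U) \otimes \id$. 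That structural input is precisely what is missing from your hypothesis.

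The paper instead proves Theorem~B directly in the $\Vect$ setting, without passing through $\VectCon$ at all. The point is that the general framework of Section~\ref{sect:generalities} is set up uniformly for both cases (A)~$=\VectCon$ and (B)~$=\Vect$; the only case-(B) hypothesis that needs checking is assumption~(4), namely that $\OO_{X_i}$ is irreducible as a $G_m$-$\OO_{X_i}$-module, and this is exactly Corollary~\ref{cor:irred} (which \emph{is} proved by differentiating the group action, but on $\OO$, not on an unknown $\sV$). With this in hand, Proposition~\ref{prop:mainprop}(3) in case~(B) shows the essential image of $\Phi^0_H$ in $\Vect^{G^0}(\Omega)$ is closed under sub-objects, and then the argument of \cite[\S7.11]{TAY3} runs verbatim with $\Vect$ in place of $\VectCon$ to give the essential image description (Theorem~\ref{thm:finiteVB}). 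The equivalence of the forgetful functor (Corollary~\ref{cor:forgetfulequiv}) is then a \emph{consequence} of Theorem~B combined with \cite{TAY3}, not an input to it.
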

    \noindent In particular, this shows that there is an equality of full subcategories
    \[
        \Vect^G_{\LT}(\Omega) = \Vect^G(\Omega)_{G^0\text{-}\fin}
    \]
    inside $\Vect^G(\Omega)$, and that the restriction functor
    \[
        \VectCon^G(\Omega)_{G^0\text{-}\fin} \rightarrow \Vect^G(\Omega)_{G^0\text{-}\fin}
    \]
    is an equivalence of categories.

\begin{remark}
	We further show that $\Vect^G(\Omega)_{G^0\text{-}\fin}$ is closed under sub-objects (Theorem \ref{thm:finiteVB1}), and so in particular that $\Psi_H$ preserves irreducibility. This generalises the results of Dospinescu and Le-Bras \cite[\S 10.1]{DLB} established during their proof of the coadmissibility of $\Gamma(\Omega, \Psi_H(V))^*$ described above, from $\GL_2(F)$ to $\GL_n(F)$.
\end{remark}

\begin{remark}
    Theorem A and Theorem B show that objects of $\Vect^G_{\LT}(\Omega)$ are functorially equipped with a connection, which for $W \in \Rep_K^{\sm}(H)$ agrees with the connection on $\Psi_H(W)$ considered in \cite[Rem.\ 4.8]{KOH} and the connection on $\Psi_H(W)$ which we consider in Section \ref{sect:DRside}.
\end{remark}

\subsection{Other Results}

We also give a complete description of the category of $G_0$-equivariant Lubin-Tate bundles $\Vect^{G_0}_{\LT}(\Omega)$ considered by Kohlhaase (Theorem \ref{thm:mainthmLT0}). Using this, we give a precise description of which of these Lubin-Tate bundles arise from smooth representations of $G_0 = \GL_n(\OO_F)$ and $H_0 = \OO_D^{\times}$, and how these subcategories intersect (Theorem \ref{thm:intersectEI}).

The proof of Theorem B used $\sD$-modules in an essential way, even though the statement itself does not involve them. As another application of the use of $\sD$-modules to the study of $\Psi_H$, we prove:

\begin{theoremC}
    For any smooth character $\chi \colon F^\times \rightarrow K^\times$ and $V \in \Rep_{K}^{\sm}(H)$,
    \[
    \Psi_H(V \otimes \chi_H) = \Psi_H(V) \otimes \chi_G.
    \]
\end{theoremC}

Here $\chi \mapsto \chi_G = \chi \circ \det$ and $\chi \mapsto \chi_H = \chi \circ \Nrd$ are the canonical identifications of the smooth character groups of $G, H$ and $F^\times$. Our proof of Theorem C uses in an essential way the enrichment of $\Psi_H$ to $\Psi_H^{\nabla}$, and makes use of a result of Kohlhaase on the trace of the action of elliptic elements of $G$ on certain stalks of $\Psi_H(V)$ \cite[Thm.\ 4.7]{KOH}.

\subsection*{Outline of the Paper}
In Section \ref{sect:smoothSLreps} and Section \ref{sect:generalities} we prove some general technical results regarding equivariant vector bundles arising from certain semilinear representations, which we use in many places throughout the rest of the paper. We advise the reader to skip these sections and refer back to them when necessary. In Section \ref{sect:LTDRbundles} we relate the infinitesimal action of $G$ on Drinfeld spaces with the action of the tangent sheaf. In Section \ref{sect:DRside0} we describe the category of $G_0$-equivariant Lubin-Tate bundles (Theorem \ref{thm:mainthmLT0}), and use this description to establish properties of $\Vect^{G_0}_{\LT}(\Omega)$. In Section \ref{sect:DRside} we prove Theorem A that $\Psi_H$ is an equivalence onto the category of $G$-equivariant Lubin-Tate bundles (Theorem \ref{thm:finalmainthmLT}). In Section \ref{sect:G0finite} we prove Theorem B and show that the essential image of $\Psi_H$ is also equal to the category of $G^0$-finite bundles (Theorem \ref{thm:finiteVB2}), before establishing some properties of $\Psi_H$ in Section \ref{sect:properties} including Theorem C (Theorem \ref{thm:twists}). 

\subsection*{Conventions} All group representations considered in this paper are assumed to be finite-dimensional.

\subsection*{Acknowledgements}

I would like to thank Jan Kohlhaase, Nicolas Dupr\'{e}, Tom Adams, Konstantin Ardakov and Alex Horawa for their comments / interest in this work. In particular I am very grateful to Jan Kohlhaase for the invitation to Essen to discuss this work, and his suggestions for how one might prove Theorem A avoiding the use of $\sD$-modules, which greatly simplified the proof of the main result. I would also like to thank the two anonymous referees for their comments which all improved the paper. This research was supported by an LMS Early Career Research Fellowship at the University of Cambridge and the departmental grant \textit{Progetto Sviluppo Dipartimentale} - UNIPD PSDIP23O88 at the University of Padova.

\section{Smooth Semilinear Representations}\label{sect:smoothSLreps}

We are interested in various categories of semilinear representations, which we now introduce. Later, we will show how these categories are related to the category of Lubin-Tate bundles.

\begin{defn}
For a group $G$ which acts on a ring $R$, and a subgroup $H$ of $G$, we write 
\[
\Rep^H_R(G)
\]
for the full subcategory of modules $V$ over the skew group ring $R \rtimes G$ such that
\begin{enumerate}
    \item $V$ is free of finite rank over $R$,
    \item $R \cdot V^H = V$.
\end{enumerate}
\end{defn}

When $H$ is trivial, we write this as $\Rep_R(G)$. Note that when the action of $H$ on $R$ is trivial, condition (2) is simply that $H$ acts trivially on $V$. We also have the following topological version.

\begin{defn}\label{def:smoothrep}
If $G$ is a topological group which acts on a ring $R$, then we write 
\[
\Rep^{\sm}_R(G)
\]
for the full subcategory of modules $V$ over the skew group ring $R \rtimes G$ such that
\begin{enumerate}
    \item $V$ is free of finite rank over $R$,
    \item there exists an open subgroup $H$ of $G$ such that $R \cdot V^H = V$.
\end{enumerate} 
\end{defn}
When the action of $G$ on $R$ is smooth (meaning $R = \cup_{H \leq_{o} G} R^H$, as $H$ varies over the open subgroups of $G$), then for $V$ free of finite rank over $R$, $V \in \Rep^{\sm}_R(G)$ if and only if $V = \cup_{H \leq_{o} G} V^H$. In particular, when the action of $G$ on $R$ is trivial, $\Rep^{\sm}_R(G)$ is simply the category of smooth representations of $G$ on free $R$-modules of finite rank.

\subsection{Some Useful Lemmas}

We are principally interested in the case when $R$ is a product of fields, in which case we shall make use of the following two lemmas.

\begin{lemma}\label{lem:autpresprincipalidem}
    Suppose that $\phi \colon L \rightarrow L$ is a ring automorphism of $L$, where $L = \prod_i L_i$ is a product of fields. Then $\phi$ preserves the set $\{e_i\}_i$ of principal idempotents.
\end{lemma}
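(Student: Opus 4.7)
The plan is to characterise the principal idempotents of $L = \prod_i L_i$ by a purely ring-theoretic property, which will automatically be preserved by any ring automorphism.

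First I would prove that an idempotent $e \in L$ is principal (i.e.\ equals some $e_i$) if and only if $eL$ is a field. For the nontrivial direction, write $e = (x_i)_i$ with $x_i \in L_i$. The relation $e^2 = e$ forces $x_i^2 = x_i$ in the field $L_i$, so $x_i \in \{0, 1_{L_i}\}$ for every $i$. Letting $S = \{i : x_i = 1_{L_i}\}$, we then have $eL \cong \prod_{i \in S} L_i$ as rings. This product is a field precisely when $|S| = 1$, in which case $e$ is one of the $e_i$. Conversely, each $e_i L \cong L_i$ is a field, so the characterisation is complete.

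With this in hand, the lemma is immediate: if $e$ is any idempotent then so is $\phi(e)$, and $\phi$ restricts to a ring isomorphism $eL \xrightarrow{\sim} \phi(e) L$. Hence $eL$ is a field if and only if $\phi(e)L$ is a field, so by the characterisation just established $\phi$ maps principal idempotents to principal idempotents. Applying the same argument to $\phi^{-1}$ shows that the induced map on $\{e_i\}_i$ is a bijection, i.e.\ $\phi$ permutes the principal idempotents. There is no real obstacle here; the only point requiring any care is confirming that the field criterion genuinely detects principal (rather than merely nonzero or primitive) idempotents, which the direct computation above handles cleanly and works uniformly regardless of whether the index set is finite or infinite.
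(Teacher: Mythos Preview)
Your proof is correct. The paper follows the same overall strategy---isolate a purely ring-theoretic characterisation of the principal idempotents that any automorphism must respect---but uses a different criterion: it characterises the principal idempotents as those $e \in \mathrm{Idem}(L)$ such that $ef \in \{e,0\}$ for every idempotent $f$. Your criterion ``$eL$ is a field'' is arguably cleaner, since it automatically excludes the zero idempotent (whereas $e=0$ technically satisfies the paper's condition and has to be discarded separately, though harmlessly since $\phi(0)=0$). Both approaches work equally well for finite and infinite index sets, and neither offers a real advantage over the other; they are minor variations on the same idea.
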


\begin{proof}
    Idempotents are preserved by automorphisms, and principal idempotents of $L$ can be characterised as those $e \in \text{Idem}(L)$ for which $ef = e$ or $ef = 0$ for any $f \in \text{Idem}(L)$.
\end{proof}

In particular, any group action on $L = \prod_i L_i$ by ring automorphisms will preserve $\{e_i\}_i$.

\begin{lemma}\label{lem:modoverskewisfree}
    Suppose that $L = \prod_i L_i$ is a product of fields with an action of a group $G$ by ring automorphisms, which acts on the set $\{e_i\}_i$ of principal idempotents of $L$ transitively. Suppose that $M$ is a $L \rtimes G$-module for which the natural map
    \[
    M \rightarrow \prod_i e_i \cdot M, \qquad m \mapsto (e_i \cdot m)_i
    \]
    is an isomorphism and $\dim_{L_i} e_i \cdot M < \infty$ for some $i$. Then $M$ is free of finite rank over $L$.
\end{lemma}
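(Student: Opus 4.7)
The plan is to exploit the transitive $G$-action on $\{e_i\}$ to propagate a basis of one block $e_{i_0} M$ to all other blocks, and then glue these into a global $L$-basis of $M$ via the given decomposition.

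First, I would fix an index $i_0$ with $d \coloneqq \dim_{L_{i_0}}(e_{i_0} M) < \infty$, and for every $i$ choose $g_i \in G$ with $g_i(e_{i_0}) = e_i$ (using transitivity); here $g_{i_0}$ can be taken to be the identity. Since the action on $L$ is through ring automorphisms and the skew-module structure satisfies $g \cdot (\ell m) = g(\ell) \cdot (g \cdot m)$, the operator $m \mapsto g_i \cdot m$ restricts to an additive bijection $e_{i_0} M \xrightarrow{\sim} e_i M$ that is semi-linear along the ring isomorphism $L_{i_0} \xrightarrow{g_i} L_i$ obtained from Lemma \ref{lem:autpresprincipalidem}. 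Consequently $\dim_{L_i}(e_i M) = d$ for every $i$.

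Next, I would pick an $L_{i_0}$-basis $m_1, \ldots, m_d$ of $e_{i_0} M$. For each $k$ and each $i$ set $m_k^{(i)} \coloneqq g_i \cdot m_k \in e_i M$; by the semi-linear bijection above, $(m_k^{(i)})_{k=1}^d$ is an $L_i$-basis of $e_i M$. Using the hypothesised isomorphism $M \cong \prod_i e_i M$, I would define $n_k \in M$ to be the element whose $i$-th component is $m_k^{(i)}$. I claim that $n_1, \ldots, n_d$ is a free $L$-basis of $M$.

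To verify this, note that the $L$-action on $M$ transports across the isomorphism $M \cong \prod_i e_i M$ to the componentwise action, since $e_i$ acts as the projector to the $i$-th factor. Thus for $a_k = (a_{i,k})_i \in L$, the element $\sum_{k=1}^d a_k n_k$ corresponds to $\bigl(\sum_k a_{i,k} m_k^{(i)}\bigr)_i$. Given an arbitrary $m \in M$ identified with $(v_i)_i$, uniquely expand $v_i = \sum_k a_{i,k} m_k^{(i)}$ in the $L_i$-basis to obtain the unique tuple $(a_k)_k \in L^d$ with $\sum_k a_k n_k = m$. This simultaneously gives spanning and freeness, so $M$ is free of rank $d$ over $L$. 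No step is a serious obstacle; the only thing to keep straight is the interplay between the semi-linearity of the $g_i$ and the $L$-linear structure under the product decomposition.
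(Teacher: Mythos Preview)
Your proof is correct and follows essentially the same approach as the paper's: fix one block, transport a basis to all other blocks via the chosen $g_i$, and assemble these into global elements through the product decomposition. The only cosmetic difference is that the paper phrases the spanning step as ``using crucially that $\{f_m\}_m$ is finite'' whereas you spell out the componentwise expansion explicitly.
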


\begin{proof}
    Fix some $e_0 \in \{e_i\}_i$ with $\dim_{L_0} e_0 \cdot M < \infty$, and for each $i$ fix some $g_i \in G$ with $g_i(e_0) = e_i$. This allows us to define field isomorphisms
    \[
    \gamma_i \coloneqq \pi_i \circ g_i \circ \iota_0 \colon L_0 \xrightarrow{\sim} L_i,
    \]
    where $\pi_i \colon L \rightarrow L_i$ is the projection and $\iota_0 \colon L_0 \rightarrow L$ is defined by setting $\iota_0(\lambda)$ to be the unique element of $L$ with $\pi_0(\iota_0(\lambda)) = \lambda$ and $\pi_j (\iota_0(\lambda)) = 0$ when $j \neq 0$.
    
    The map $g_i \colon M \rightarrow M$ restricts to a bijection $e_0 \cdot M \xrightarrow{\sim} e_i \cdot M$, which is $L_0$-linear with respect to the natural action of $L_0$ on $e_0 \cdot M$ and the action of $L_0$ on $e_i \cdot M$ through $\gamma_i \colon L_0 \xrightarrow{\sim} L_i$. Then if $\{f_m^0\}_m$ is a basis for $e_0 \cdot M$ over $L_0$, the set $\{f_m\}_m$ defined by $f_m = (g_i(f_m^0))_i$ is a basis of $\prod_i M_i$ over $L$. Indeed, because each $\{g_i(f_m^0)\}_m$ is a basis of $e_i \cdot M$ over $L_i$, the set $\{f_m\}_m$ is linearly independent and (using crucially that $\{f_m\}_m$ is finite) the set $\{f_m\}_m$ also spans $M$.
\end{proof}

\begin{remark}
    If the assumption that some $\dim_{L_i} e_i \cdot M < \infty$ is removed, then the same proof shows that $M$ is free (but potentially of infinite rank), provided that $|I| < \infty$.
\end{remark}

\section{Semilinear Representations and Equivariant Vector Bundles}\label{sect:generalities}

Suppose that $X$ is a scheme or rigid space over a field $K$ with an action of a group $G$, and that $H$ is a subgroup of $G$. In this context we can consider the category $\Vect^G(X)$ of $G$-equivariant vector bundles on $X$, and when $X$ is smooth the category $\VectCon^G(X)$ of $G$-equivariant vector bundles with connection on $X$ (see, for example, \cite[\S 2.8]{TAY3}).

We consider the quadruple $(\sA, \sB, \sC, L)$ as in one of the following two cases:
\begin{enumerate}
	\item[\textbf{(A)}]
    $\sA = \VectCon^G(X)$, $\sB = \VectCon^H(X)$, $\sC = \VectCon(X)$, $L = c(X)^H$,
	\item[\textbf{(B)}] 
	$\sA = \Vect^G(X)$, $\sB = \Vect^H(X)$, $\sC = \Vect(X)$, $L = \OO(X)^H$,
\end{enumerate}
where we additionally assume in case (A) that $K$ has characteristic $0$ and $X$ is smooth over $K$, in order for the quantities $(\sA, \sB, \sC, L)$ to be well-defined. Here $c(X)$ is the $K$-algebra of global sections of the sheaf of constant functions as considered in \cite[\S 3]{TAY3}. In either case we further assume:
\begin{enumerate}
    \item $X$ is the categorical disjoint union of its connected components,
    \item The action of $G$ on $\OO(X)$ preserves $L$,
    \item $G$ acts transitively on $\pi_0(X)$.
\end{enumerate}
\begin{remark}
We note that the requirement that $X$ is the categorical disjoint union of its connected components is automatic if $X$ is a rigid space \cite[\S 2.1]{CON}, or if $X$ is in case (A) as smooth schemes are locally noetherian and thus locally connected \cite[Lem.\ 3.4(1)]{TAY3}. Furthermore, if $H$ is normal in $G$ or if $L = c(X)$ (in case (A)) then the requirement that $G$ preserves $L$ is automatic. Additionally, the results of this section still have consequences when the assumption that $G$ acts transitively on $\pi_0(X)$ is dropped, as they can be applied to each orbit of the action of $G$ on $X$.
\end{remark}
Write $\{X_i\}_{i \in \pi_0(X)}$ for the set of connected components of $X$, and let $\mathfrak{O}$ be the set of $H$ orbits on $\pi_0(X)$. For each $\fo \in \fO$, set
\[
X_{\fo} = \bigsqcup_{i \in \fo} X_i.
\]
For $\fo \in \fO$ and $i \in \pi_0(X)$ we write
\begin{enumerate}
	\item[\textbf{(A)}]
	$\sB_{\fo} = \VectCon^H(X_{\fo})$, $\sB_{i} = \VectCon^{H_i}(X_{i})$,
	\item[\textbf{(B)}] 
	$\sB_{\fo} = \Coh^H(X_{\fo})$, $\sB_{i} = \Coh^{H_i}(X_{i})$,
\end{enumerate}
where $H_i = \Stab_H(X_i)$ and $\Coh^H(X_{\fo})$ is the category of $H$-$\OO_{X_{\fo}}$-modules for which the underlying $\OO_{X_{\fo}}$-module is coherent (and similarly for $\Coh^{H_i}(X_i)$). We note that these definitions are uniform, as in case (A) a $\sD$-module on $X_{\fo}$ which is coherent as an $\OO_{X_{\fo}}$-module is automatically locally free \cite[Lem.\ 2.38]{TAY3}. With this notation, we also make the additional assumption that:
\begin{enumerate}
    \item[(4)] Each $\OO_{X_{\fo}}$ is irreducible as an object of $\sB_{\fo}$.
\end{enumerate}
We note that by \cite[Ex.\ 2.33]{TAY3}, this is equivalent to the assumption that
\begin{enumerate}
    \item[(4')] Each $\OO_{X_{i}}$ is irreducible as an object of $\sB_{i}$.
\end{enumerate}
As $X$ is the disjoint union of the $X_{\fo}$, 
\[
L = \prod_{\fo \in \fO} L_{\fo}
\]
where for $\fo \in \fO$ and $i \in \pi_0(X)$ we write
\begin{enumerate}
	\item[\textbf{(A)}]
	$L_{\fo} = c(X_{\fo})^H$, $L_{i} = c(X_{i})^{H_i}$
	\item[\textbf{(B)}] 
	$L_{\fo} = \OO(X_{\fo})^H$, $L_{i} = \OO(X_{i})^{H_i}$.
\end{enumerate}
% We will make use of the following lemma, which in particular shows that each $L_{\fo}$ and $L_i$ is a field.
\begin{lemma}\label{lem:Lisfield}
For $i \in \fo$ and $\fo \in \fO$, $L_{\fo}$ and $L_{i}$ are fields and the projection map
\[
\pi \colon L_{\fo} \xrightarrow{\sim} L_i.
\]
is an isomorphism.
\end{lemma}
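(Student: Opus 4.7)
The plan is to identify $L_i$ and $L_{\fo}$ with endomorphism rings in $\sB_i$ and $\sB_{\fo}$, invoke Schur's lemma to deduce they are fields, and then use the transitivity of the $H$-action on the orbit $\fo$ to exhibit the isomorphism $\pi \colon L_{\fo} \xrightarrow{\sim} L_i$. There is no real obstacle: the statement is essentially structural, with the only substantive input being the irreducibility assumption (4).

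First I would record the identification $L_i = \End_{\sB_i}(\OO_{X_i})$. In case (A), an $\OO_{X_i}$-linear endomorphism of $\OO_{X_i}$ commutes with the connection precisely when it is multiplication by a horizontal section, so $\End_{\VectCon(X_i)}(\OO_{X_i}) = c(X_i)$, and imposing $H_i$-equivariance yields $c(X_i)^{H_i} = L_i$. In case (B), $\End_{\Coh(X_i)}(\OO_{X_i}) = \OO(X_i)$, and the same $H_i$-equivariance argument gives $\OO(X_i)^{H_i} = L_i$. The analogous argument with $\fo$ in place of $i$ gives $L_{\fo} = \End_{\sB_{\fo}}(\OO_{X_{\fo}})$. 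Assumptions (4) and (4') then say that $\OO_{X_{\fo}}$ and $\OO_{X_i}$ are irreducible in $\sB_{\fo}$ and $\sB_i$, so Schur's lemma forces both endomorphism rings to be division rings; as they sit inside the commutative rings $c(-)$ or $\OO(-)$, they are fields.

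For the isomorphism, assumption (1) supplies a product decomposition
\[
c(X_{\fo}) = \prod_{i \in \fo} c(X_i)
\]
(and analogously for $\OO(X_{\fo})$ in case (B)), on which the $H$-action permutes factors via its action on the orbit $\fo$, with the stabilizer of the $i$th factor being $H_i$ acting naturally on $c(X_i)$ (resp.\ $\OO(X_i)$). Since $\fo$ is a single $H$-orbit, any invariant tuple $(a_j)_{j \in \fo}$ is determined by its component $a_i$ at any chosen $i \in \fo$, which must lie in $L_i$; conversely, any $a_i \in L_i$ extends to an invariant tuple by setting $a_{hi} \coloneqq h \cdot a_i$, well-defined because two elements of $H$ sending $i$ to the same $j$ differ by an element of $H_i$ fixing $a_i$. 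This provides an inverse to the projection and gives $\pi \colon L_{\fo} \xrightarrow{\sim} L_i$.
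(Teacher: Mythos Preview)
Your proof is correct and follows essentially the same approach as the paper: identify $L_{\fo}$ and $L_i$ with $\End_{\sB_{\fo}}(\OO_{X_{\fo}})$ and $\End_{\sB_i}(\OO_{X_i})$, then invoke Schur's lemma together with assumption~(4)/(4') to conclude they are fields. The only difference is cosmetic: where the paper cites \cite[Lem.~3.3]{TAY3} for the endomorphism-ring identification and \cite[Ex.~2.32]{TAY3} for the restriction isomorphism, you unpack both arguments directly---the former via ``endomorphisms are multiplication by (horizontal) sections,'' the latter via an explicit orbit--stabilizer construction of the inverse to the projection.
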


\begin{proof}
The projection $\pi$ forms part of the commutative diagram
% https://q.uiver.app/#q=WzAsNCxbMCwwLCJMX3tcXGZvfSJdLFsxLDAsIkxfaSJdLFswLDEsIlxcRW5kX3tcXHNCX3tcXGZvfX0oXFxPT197WF97XFxmb319KSJdLFsxLDEsIlxcRW5kX3tcXHNCX3tpfX0oXFxPT197WF97aX19KSJdLFswLDEsIlxccGkiXSxbMCwyLCJcXHNpbSIsMl0sWzEsMywiXFxzaW0iLDJdLFsyLDNdXQ==
\[\begin{tikzcd}
	{L_{\fo}} & {L_i} \\
	{\End_{\sB_{\fo}}(\OO_{X_{\fo}})} & {\End_{\sB_{i}}(\OO_{X_{i}})}
	\arrow["\pi", from=1-1, to=1-2]
	\arrow["\sim"', from=1-1, to=2-1]
	\arrow["\sim"', from=1-2, to=2-2]
	\arrow[from=2-1, to=2-2]
\end{tikzcd}\]
with vertical isomorphisms given by taking invariants of the isomorphism of \cite[Lem.\ 3.3]{TAY3}. Each endomorphism ring is a division ring by assumption (4) and Schur's Lemma, hence both $L_{\fo}$ and $L_i$ are fields. Furthermore, $\pi$ is an isomorphism as the lower map is an isomorphism by \cite[Ex.\ 2.33]{TAY3}.
\end{proof}

\subsection*{The Functor $\OO_X \otimes_{L} -$}
We consider the functor
\[
\OO_X \otimes_{L} - \colon \Rep^H_L(G) \rightarrow \sA
\]
defined by considering $\OO_X \otimes_L V$ as a $G$-equivariant sheaf via the diagonal action of $G$. In case (A), for $V \in \Rep^H_L(G)$, we additionally view $\OO_X \otimes_L V$ as $\sD_X$-module via the action of $\sD_X$ on the left factor, which induces a well-defined action on the tensor product as the tangent sheaf $\sT_X$ acts trivially on $L \subset c(X)$, and further gives $\OO_X \otimes_{L} V$ the structure of a $G$-$\sD_X$-module.

\subsection*{The Functor $\Hom_{\sB}(\OO_X, -)$}
In the other direction we have the functor
\[
\Hom_{\sB}(\OO_X, -) \colon \sA \rightarrow \Mod_L.
\]
Here, for $\sV \in \sA$, $\Hom_{\sB}(\OO_X, \sV)$ is naturally a module over $L$ as $L$ is fixed by $H$. In order to see that this is free of finite rank over $L$, we use the following lemma.

\begin{lemma}\label{lem:injective}
Suppose that $\sV \in \sA$. Then the natural morphism
    \begin{equation}\label{eqn:injectivemap}
        \OO_X \otimes_L \Hom_{\sB}(\OO_X, \sV) \rightarrow \sV
    \end{equation}
in $\sB$ is injective, and $\Hom_{\sB}(\OO_X, \sV)$ is free of finite rank over $L$.
\end{lemma}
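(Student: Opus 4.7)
The plan is to decompose everything along the set $\fO$ of $H$-orbits and argue orbit-by-orbit using a Schur-type argument. Since $X = \bigsqcup_\fo X_\fo$ with each $X_\fo$ being $H$-stable, I obtain corresponding decompositions $\sV = \bigoplus_\fo \sV|_{X_\fo}$, $L = \prod_\fo L_\fo$, and $M := \Hom_\sB(\OO_X, \sV) = \prod_\fo M_\fo$ with $M_\fo := \Hom_{\sB_\fo}(\OO_{X_\fo}, \sV|_{X_\fo})$; the morphism \eqref{eqn:injectivemap} then factors componentwise as a collection of maps $\OO_{X_\fo} \otimes_{L_\fo} M_\fo \to \sV|_{X_\fo}$ in $\sB_\fo$, so it suffices to treat each $\fo$ separately.

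For injectivity of each component map, I appeal to hypothesis (4): $\OO_{X_\fo}$ is simple in $\sB_\fo$ with endomorphism ring the field $L_\fo$, by Lemma \ref{lem:Lisfield}. A standard density argument then shows that for any finite $L_\fo$-linearly independent set $f_1, \ldots, f_n \in M_\fo$, the induced map $\OO_{X_\fo}^{\oplus n} \to \sV|_{X_\fo}$ is injective: any nonzero kernel inside the semisimple object $\OO_{X_\fo}^{\oplus n}$ would contain a copy of $\OO_{X_\fo}$, and Schur's lemma applied to such an inclusion would produce a nontrivial $L_\fo$-linear relation among the $f_i$, contradicting independence. Passing to the colimit over finite linearly independent subsets of $M_\fo$ then gives the injectivity of \eqref{eqn:injectivemap} on each $X_\fo$, hence on $X$.

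For the finiteness of $d_\fo := \dim_{L_\fo} M_\fo$, the injection $\OO_{X_\fo}^{\oplus d_\fo} \hookrightarrow \sV|_{X_\fo}$ from the previous step is in particular an injection of $\OO_{X_\fo}$-modules into a vector bundle of finite rank $r$; passing to the stalk at a point of $X_\fo$ and then to its fraction field immediately gives $d_\fo \leq r$. To upgrade this to the freeness of $M$ over $L$, I then invoke Lemma \ref{lem:modoverskewisfree}: transitivity of the $G$-action on the principal idempotents $\{e_\fo\}$ of $L$ follows from assumption (3) combined with Lemma \ref{lem:autpresprincipalidem} applied to the $G$-action on $L$, while the $G$-equivariant structure on $\sV \in \sA$ together with assumption (2) is what should equip $M$ with a compatible $L \rtimes G$-module structure.

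The main obstacle is verifying this last ingredient, namely that the $G$-action on $\sV$ actually descends to a well-defined $L \rtimes G$-module structure on $M$: since $H$ is not assumed normal in $G$, the $G$-translate $g \cdot v$ of an $H$-invariant section $v$ is a priori only $gHg^{-1}$-invariant, and it is precisely assumption (2) (that $G$ preserves $L$) that allows one to show these translates land in the correct invariant subspaces $M_{g\fo}$ so as to produce an $L_\fo$-semi-linear isomorphism $M_\fo \to M_{g\fo}$ for each $g$ with $g\fo \in \fO$. Once this compatibility is pinned down, Lemma \ref{lem:modoverskewisfree} completes the proof and bounds the rank of $M$ over $L$ by $r$.
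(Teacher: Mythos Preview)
Your proposal is correct and follows essentially the same strategy as the paper. For injectivity, the paper carries out an explicit induction on the number of $L_\fo$-independent elements $e_i = f_i(1_X)$, showing by hand that any dependence relation among them has coefficients in $L_\fo$ (using $H$-invariance, and in case~(A) the $\sD$-module structure); your Schur-type argument---a nonzero kernel of $\OO_{X_\fo}^{\oplus n} \to \sV_\fo$ in the abelian category $\sB_\fo$ contains a copy of the simple object $\OO_{X_\fo}$, whose component maps give an $L_\fo$-relation among the $f_i$---is a cleaner packaging of exactly the same idea. For the rank bound and freeness, both you and the paper invoke Lemma~\ref{lem:modoverskewisfree}.

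On the freeness step you are in fact more scrupulous than the paper. The paper simply asserts that Lemma~\ref{lem:modoverskewisfree} applies because ``$G$ acts transitively on the orbits $\fo$'', without verifying that $M = \Hom_\sB(\OO_X,\sV)$ carries the required $L \rtimes G$-module structure; indeed the text immediately \emph{after} the lemma only establishes this when $H$ is normal in $G$. Your observation that assumption~(2) forces $G$ to permute the principal idempotents of $L$, hence the set $\fO$, is the right first step. Be aware, though, that this alone does not show that the $g$-translate of an element of $M_\fo$ lands in $M_{g\fo}$ rather than in some $gHg^{-1}$-invariant analogue, so your sketch does not quite close the gap either; in the paper's actual applications (e.g.\ the triple $(\sM_m, G\times H/H_m, G_m)$ in Section~\ref{sect:DRside}) this is harmless because the elements witnessing transitivity on $\fO$ can be chosen to normalise $H$.
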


\begin{proof}
For injectivity, as $X$ is a $H$-stable disjoint union of $\{X_{\fo}\}_{\fo}$, the morphism (\ref{eqn:injectivemap}) factors as the product of morphisms
\[
\OO_{X_{\fo}} \otimes_{L_{\fo}} \Hom_{\sB_{\fo}}(\OO_{X_{\fo}}, \sV_{\fo}) \rightarrow \sV_{\fo},
\]
where $\sV_{\fo} \coloneqq \sV|_{X_{\fo}}$, and we are reduced to showing that each is injective, for which we modify the proof of \cite[Lem.\ 4.3]{TAY3}. Recall that $L_{\fo}$ is a field by Lemma \ref{lem:Lisfield}, and suppose that $f_1, ... ,f_k \in \Hom_{\sB_{\fo}}(\OO_{X_{\fo}}, \sV_{\fo})$ are $L_{\fo}$-linearly independent. Define $e_1, ... , e_k \in \sV_{\fo}(X_{\fo})^H$ by $e_i \coloneqq f_i(1_{X_{\fo}})$ which are also $L_{\fo}$-linearly independent because the natural map
\[
\Hom_{\sB_{\fo}}(\OO_{X_{\fo}}, \sV_{\fo}) \rightarrow \sV(X_{\fo})^H, \qquad f \mapsto f(1_{X_{\fo}})
\]
is $L_{\fo}$-linear and injective. Because $\OO_{X_{\fo}}$ is irreducible in $\sB_{\fo}$ by assumption (4), it is sufficient for us to show that the sum
\[
		\sum_{i = 1}^k \OO_{X_{\fo}} \cdot e_i \hookrightarrow \sV_{\fo}
\]
is direct. We prove this by induction on $k \geq 1$. When $k = 1$ this is trivially true, so suppose that the statement is true for some fixed $k \geq 1$, and consider
\[
		\sum_{i = 1}^{k+1} \OO_{X_{\fo}} \cdot e_i.
\]
After rearranging the factors if necessary, it is sufficient to show that
\[
	\left( \bigoplus_{i = 1}^{k} \OO_{X_{\fo}} \cdot e_i \right) \bigcap \OO_{X_{\fo}} \cdot e_{k+1} = 0.
\]
If this intersection were non-zero, then 
\[
	\left( \bigoplus_{i = 1}^{k} \OO_{X_{\fo}} \cdot e_i \right) \bigcap  \OO_{X_{\fo}} \cdot e_{k+1} = \OO_{X_{\fo}} \cdot e_{k+1},
\]
	by the irreducibility of $\OO_{X_{\fo}} \cdot e_{k+1}$. We can therefore write
 \[
 e_{k+1} = \lambda_1 e_1 + \cdots + \lambda_k e_k
 \]
for unique $\lambda_j \in \OO(X_{\fo})$. For any $h \in H$,
\[
0 = h(e_{k+1}) - e_{k+1} = (h(\lambda_1) - \lambda_1) e_1 + \cdots + (h(\lambda_k) - \lambda_k) e_k,
\]
and therefore each $\lambda_j \in \OO(X_{\fo})^H$ by induction. Furthermore, in case (A), for any affinoid (resp. affine) open subset $U$ and $\partial \in \sT(U)$,
	\begin{align*}
		0 &= \partial(e_{k+1}) = \partial(\lambda_1 e_1 + \cdots + \lambda_k e_k), \\
		&= \partial(\lambda_1)e_1 + \cdots + \partial(\lambda_k) e_k,
	\end{align*}
	in $\sV(U)$, and therefore by induction $\partial(\lambda_i) = 0$ for all $i = 1, ... ,n$. As this holds for each admissible open subset $U \subset X_{\fo}$, each $\lambda_i \in c(X_{\fo})$ by \cite[Lem.\ 3.2]{TAY3}. Therefore, in either case (A) or (B), $\lambda_i \in L_{\fo}$, which is a contradiction as we know that $e_1, ... ,e_k, e_{k+1}$ are linearly independent over $L_{\fo}$.

Finally, the fact that $\Hom_{\sB}(\OO_X, \sV)$ is free of finite rank over $L$ follows directly from Lemma \ref{lem:modoverskewisfree}, which uses Lemma \ref{lem:Lisfield}, that
\[
\Hom_{\sB}(\OO_{X}, \sV) \xrightarrow{\sim} \prod_{\fo} \Hom_{\sB_{\fo}}(\OO_{X_{\fo}}, \sV_{\fo}),
\]
that each $\dim_{L_{\fo}} \Hom_{\sB_{\fo}}(\OO_{X_{\fo}}, \sV_{\fo}) \leq \rank_{X_{\fo}}(\sV_{\fo}) < \infty$, and that $G$ acts transitively on the orbits $\fo$.
\end{proof}

When $H$ is normal in $G$, the natural action of $G$ on $\Hom_{\sC}(\OO_X, \sV)$ \cite[Rem.\ 2.32]{TAY3} preserves $\Hom_{\sB}(\OO_X, \sV)$ because $H$ is normal in $G$. This gives $\Hom_{\sB}(\OO_X, \sV)$ the structure of a $L \rtimes G$-module upon which $H$ acts trivially, hence the functor $\Hom_{\sB}(\OO_X, -)$ factors as
\[
 \Hom_{\sB}(\OO_X, -) \colon \sA \rightarrow \Rep_L^H(G),
\]
and (\ref{eqn:injectivemap}) is a morphism in $\sA$.

We can now state the main result of this section.

\begin{prop}\label{prop:mainprop}
Suppose that $X$, $G$ and $H$ are as described at the start of Section \ref{sect:generalities} and satisfy the assumptions (1), (2), (3) and (4). Then:
\begin{enumerate}
    \item
    The functor
    \[
    \OO_X \otimes_{L} - \colon \Rep^H_L(G) \rightarrow \sA
    \]
    is exact, monoidal, and fully faithful.
    \item
    Suppose that $H$ is normal in $G$. Then the essential image is the full subcategory of objects $\sV \in \sA$ for which the injective map \emph{(\ref{eqn:injectivemap})} is an isomorphism, and on this full subcategory
    \[
    \Hom_{\sB}(\OO_X, -) \colon \sA \rightarrow \Rep^H_L(G)
    \]
    is a quasi-inverse to $\OO_X \otimes_L -$.
    \item 
    If each $X_i$ is quasi-Stein then the essential image of $\OO_X \otimes_{L} -$ is closed under sub-quotients. 
    \end{enumerate}
\end{prop}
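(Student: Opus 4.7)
For part (1), I would verify exactness, monoidality, and fully faithfulness separately. Exactness follows immediately because $V$ is free over the semisimple ring $L = \prod_{\fo} L_{\fo}$: the tensor product splits orbit-wise into $\OO_{X_\fo} \otimes_{L_\fo} V_\fo \cong \OO_{X_\fo}^{\dim V_\fo}$, so $\OO_X \otimes_L -$ is exact. Monoidality follows from the standard $\OO_X$-bilinear isomorphism $\OO_X \otimes_L (V \otimes_L W) \cong (\OO_X \otimes_L V) \otimes_{\OO_X} (\OO_X \otimes_L W)$, compatibly with the diagonal $G$-action (and, in case (A), the left-factor $\sD_X$-action). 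For fully faithfulness, the key step is to show that the natural map $W \to \Hom_{\sB}(\OO_X, \OO_X \otimes_L W)$ sending $w \mapsto (f \mapsto f \otimes w)$ is an isomorphism for $W \in \Rep_L^H(G)$. This map is well-defined and $H$-equivariant because $H$ acts trivially on $L$, which combined with condition (2) of Definition~\ref{def:smoothrep} forces $H$ to act trivially on $W$ as well. Surjectivity follows by expanding an $H$-invariant element in an $L$-basis of $W$ and using $\OO(X_{\fo})^H = L_{\fo}$ (resp. $c(X_\fo)^H = L_\fo$ in case~(A)) to conclude that the coefficients lie in $L$. Given this isomorphism, fully faithfulness is a formal computation: $\Hom_{\sA}(\OO_X \otimes_L V, \OO_X \otimes_L W) = \Hom_{\sB}(\OO_X \otimes_L V, \OO_X \otimes_L W)^G$, and upon choosing an $L$-basis of $V$, the inner Hom is identified with $W^{\dim_L V}$, yielding $\Hom_L(V, W)^G = \Hom_{\Rep_L^H(G)}(V, W)$.

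For part (2), assuming $H$ is normal, the discussion immediately preceding the proposition together with Lemma~\ref{lem:injective} shows that $\Hom_{\sB}(\OO_X, -)$ lands in $\Rep_L^H(G)$ (freeness over $L$ from Lemma~\ref{lem:injective}; the $G$-action from $H$ normal; trivial $H$-action because morphisms in $\sB$ are $H$-equivariant). The essential-image characterisation is then formal: if $\sV = \OO_X \otimes_L V$ is in the essential image, part~(1) gives $\Hom_{\sB}(\OO_X, \sV) = V$ and the map~(\ref{eqn:injectivemap}) is the identity, hence an isomorphism; conversely, if (\ref{eqn:injectivemap}) is an isomorphism for some $\sV$, then $\sV \cong \OO_X \otimes_L \Hom_{\sB}(\OO_X, \sV)$ lies in the essential image. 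The quasi-inverse claim then follows by composing in both directions, using part~(1) to compute $\Hom_{\sB}(\OO_X, \OO_X \otimes_L V) = V$.

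Part~(3) is the main technical step, and the only place where the quasi-Stein hypothesis intervenes. Given a subobject $\sV' \hookrightarrow \OO_X \otimes_L V$ in $\sA$, I set $V' = \Hom_{\sB}(\OO_X, \sV') \subseteq V$, which is free over $L$ by Lemma~\ref{lem:injective}. Applying Lemma~\ref{lem:injective} to $\sV'$ itself gives an injection $\OO_X \otimes_L V' \hookrightarrow \sV'$; the goal is to upgrade this to an isomorphism, equivalently to show that the coherent quotient $\sQ = \sV' / (\OO_X \otimes_L V')$ vanishes. On each quasi-Stein component $X_i$, Kiehl's Theorem~B gives $\HH^1(X_i, \sF) = 0$ for coherent $\sF$, so the global-sections functor is exact on each component, and $\sQ(X_i)$ is realised as an $H_i$-stable coherent $\OO(X_i)$-submodule of $\OO(X_i) \otimes_{L_i} (V_{\fo}/V'_{\fo})$ whose $H_i$-invariants vanish by construction. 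The hardest step — and the obstruction I expect to take the most care — is to deduce $\sQ(X_i) = 0$ from this vanishing of $H_i$-invariants: I would mimic the coordinate-expansion argument used in the proof of Lemma~\ref{lem:injective}, writing any hypothetical nonzero element of $\sQ(X_i)$ in an $L_i$-basis of $V_{\fo}/V'_{\fo}$ and exploiting the combination of $H_i$-equivariance of the ambient sub-sheaf with $\OO(X_i)^{H_i} = L_i$ to force the existence of a nonzero $H_i$-invariant, contradicting $\sQ(X_i)^{H_i} = 0$. For quotients, the subobject case combined with exactness of $\OO_X \otimes_L -$ from part~(1) and the essential-image characterisation of part~(2) reduces the problem to the subobject case applied to the kernel of the quotient map.
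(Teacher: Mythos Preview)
Your treatment of parts (1) and (2) is correct and matches the paper's argument in substance, only reorganised: you first identify $\Hom_{\sB}(\OO_X,\OO_X\otimes_L W)\cong W$ and deduce fullness from that, whereas the paper proves fullness directly by showing $f_X(1_X\otimes v)\in L\otimes_L W$ and then records the Hom computation during part~(2).

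For part~(3) your route differs from the paper's. The paper does not form the quotient $\sQ$ at all: it defines $W=\{v\in V\mid s\otimes v\in\sF(U)\text{ for all }U,s\}$ directly (which agrees with your $V'$, but sidesteps the need for $H\trianglelefteq G$ to get a $G$-action on $\Hom_\sB(\OO_X,\sV')$), and then proves $\OO(X_i)\otimes_{L_\fo} e_\fo W\twoheadrightarrow \sF(X_i)$ by the \emph{Jacobson Density Theorem}: writing $z=\sum_k s_k\otimes v_k\in\sF(X_i)$ with the $s_k$ linearly independent over $L_i$, density for the simple $R$-module $\OO(X_i)$ (where $R=\OO(X_i)\rtimes H_i$, resp.\ $\sD(X_i)\rtimes H_i$) produces $x_j\in R$ acting as projection onto $s_j$, so $s_j\otimes v_j\in\sF(X_i)$ and hence $v_j\in e_\fo W$.

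Your two-step plan can be made to work, but both steps need more than you indicate. The claim $\sQ(X_i)^{H_i}=0$ is not quite ``by construction'': one must use the embedding $\sQ(X_i)\hookrightarrow\OO(X_i)\otimes_{L_i}(V/V')_i$ to see that any $H_i$-invariant is of the form $1\otimes\bar v$, lift to $1\otimes v\in\sV'(X_i)^{H_i}=V'_i$, and conclude $\bar v=0$. More seriously, the ``Lemma~\ref{lem:injective} mimicry'' for the implication $\sQ(X_i)\neq 0\Rightarrow\sQ(X_i)^{H_i}\neq 0$ does not go through on $H_i$-equivariance and $\OO(X_i)^{H_i}=L_i$ alone: the induction in Lemma~\ref{lem:injective} relies on uniqueness of coefficients to force them into $L$, but here you need to \emph{produce} an invariant, which requires using the $R$-module structure. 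One fix: take $q=\sum_{j\le k}s_j\bar w_j$ of minimal support, use simplicity of $\OO(X_i)$ over $R$ to replace $q$ by some $q'\in Rq$ with first coordinate $1$; then $h(q')-q'$ has strictly smaller support for every $h\in H_i$, hence vanishes. This is a weak cousin of the paper's density argument. Finally, your reduction of quotients to subobjects via part~(2) tacitly assumes $H$ normal; as the paper notes, exactness plus fullness already gives this reduction without invoking~(2).
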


For simplicity, here and in the proof of Proposition \ref{prop:mainprop} we use the term \emph{quasi-Stein} to mean \emph{quasi-Stein} of finite dimension ($\sup_{x \in X} \dim(\OO_{X,x}) < \infty$) when $X$ is a rigid space, and to mean \emph{affine} when $X$ is a scheme. 

\begin{remark}
    In case (A), the essential image description (3) is equivalently those $\sV \in \sA$ for which $\rank_{L}(\Hom_{\sB}(\OO_X, \sV)) = \rank(\sV)$, which can be seen using the same argument as \cite[Thm.\ 4.4(3)]{TAY3}.
\end{remark}

\begin{remark}
 Lemma \ref{lem:injective} and Proposition \ref{prop:mainprop} generalise Lemma 4.3 and Theorem 4.4 of \cite{TAY3}, which can be seen by taking the triple $(X, G, H)$ to be $(X, G \times H, G)$ (in the notation of loc.\ cit.).
\end{remark}

\begin{proof}
    For point (1), first note that $\OO_X \otimes_L -$ is clearly exact, monoidal and faithful. To show that it is full, suppose that $V,W \in \Rep_L^H(G)$, and
    \[
    f \colon \OO_X \otimes_L V \rightarrow \OO_X \otimes_L W
    \]
    is a morphism in $\sA$. For any $v \in V$, because $H$ acts trivially on $V$ and $W$ and $f$ is $H$-equivariant,
    \[
    f_X(1_X \otimes v) \in \OO(X)^H \otimes_L W.
    \]
    Furthermore, in case (A), for any admissible open subset $U \subset X$ and $\partial \in \sT(U)$
    \[
    \partial (f_U(1_U \otimes v)) = f_U(\partial(1_U) \otimes v) = 0,
    \]
    and therefore
    \[
    f_X(1_X \otimes v) \in c(X) \otimes_L W
    \]
    by \cite[Lem.\ 3.2]{TAY3}. In either case (A) or (B), 
    \[
    f_X(1_X \otimes v) \in L \otimes_L W \equiv W,
    \]
    and this allows us to define a morphism $\lambda \colon V \rightarrow W$ with $f_X(1_X \otimes v) = 1 \otimes \lambda(v)$. It is direct to see, as $f$ is a morphism in $\sA$, that $f = 1 \otimes \lambda$ and hence $\OO_X \otimes_L -$ is full.

    For point (2), first note that because $H$ is normal in $G$ we may view $\Hom_{\sB}(\OO_X, -)$ as a functor from $\sA$ to $\Rep_L^H(G)$, not just $\Mod_L$, as described above. If $\sV \in \sA$ and the injective map (\ref{eqn:injectivemap}) is an isomorphism then $\sV$ is in the essential image of the functor $\OO_X \otimes_L -$, and on the full subcategory of such $\sV$ the functor $\Hom_{\sB}(\OO_X, -)$ provides a right quasi-inverse to $\OO_X \otimes_L -$. Conversely, if $\sV = \OO_X \otimes_L V$ is the in the essential image, then 
    \[
    \Hom_{\sB}(\OO_X, \OO_X \otimes_L V) = \Hom_{\sB}(\OO_X \otimes_L L, \OO_X \otimes_L V) \xleftarrow{\sim} \Hom_{L[H]}(L, V) \equiv V,
    \]
    by the fully faithfulness of point (1) in the case when we take $G = H$. Therefore, 
    \[
    \OO_X \otimes_L \Hom_{\sB}(\OO_X, \sV) \rightarrow \sV
    \]
    is an isomorphism, being identified with the identity map
    \[
    \OO_X \otimes_L V \rightarrow \sV
    \]
    and we see that $\Hom_{\sB}(\OO_X, -)$ provides a left quasi-inverse to $\OO_X \otimes_L -$.
    
    For point (3), in case (A) one can use the same argument as \cite[Thm.\ 4.4(4)]{TAY3}, which uses point (2) and the fact that $\VectCon^G(X)$ is closed under quotients in $\Mod(G\text{-}\sD_X)$. However it isn't true in general that $\Vect^G(X)$ is closed under quotients in $\Mod(G \text{-}\OO_X)$, and so we give an alternative proof. First note that because the functor is exact and full, it is sufficient to show that the essential image is closed under sub-objects. Given $V \in \Rep^H_L(G)$, suppose that $\sF \subset \OO_X \otimes_L V$ is a sub-object in $\sA$, and define
    \[
		W \coloneqq \{v \in V \mid s \otimes v \in \sF(U) \text{ for any open } U \subset X, s \in \OO_X(U)\}.
	\]
    This is an $L$-submodule of $V$ as the tensor product is over $L$, and further a $L \rtimes G$ 
    -submodule of $V$, as given $g \in G$ and $v \in V$, then for any open subset $U \subset X$ and $s \in \OO_X(U)$,
    \[
    s \otimes g(v) = g^{\OO_X}_{g^{-1}(U)}((g^{-1})^{\OO_X}_U(s)) \otimes g(v) = g^{\OO_X \otimes V}((g^{-1})^{\OO_X}_U(s) \otimes v) \in \sF(U).
    \]
    We now claim that the injection $\OO_X \otimes_L W \hookrightarrow \sF$ is an isomorphism. It is sufficient to show that for all $i \in \pi_0(X)$, the restriction to $X_i$ is surjective, or equivalently that the morphism on sections
    \[
    \OO(X_i) \otimes_{L} W \hookrightarrow \sF(X_i)
    \]
    is surjective, as each $X_i$ is quasi-Stein. Furthermore, as $e_{\fo}$ acts as the identity on $\OO(X_i)$, the inclusion $e_{\fo} \cdot W \hookrightarrow W$ induces an isomorphism 
    \[
    \OO(X_i) \otimes_{L_{\fo}} e_{\fo} \cdot W \xrightarrow{\sim} \OO(X_i) \otimes_{L} W.
    \]
    The same is true for $V$, and we have a commutative diagram
% https://q.uiver.app/#q=WzAsNixbMCwwLCJcXE9PKFhfaSkgXFxvdGltZXNfe0xfe1xcZm99fSBlX3tcXGZvfSBcXGNkb3QgVyJdLFsyLDAsIlxcT08oWF9pKSBcXG90aW1lc197TF97XFxmb319IGVfe1xcZm99IFxcY2RvdCBWIl0sWzAsMSwiXFxPTyhYX2kpIFxcb3RpbWVzX3tMfSBXIl0sWzEsMSwiXFxzRihYX2kpIl0sWzEsMCwiTV9pIl0sWzIsMSwiXFxPTyhYX2kpIFxcb3RpbWVzX3tMfSBWIl0sWzMsNSwiIiwwLHsic3R5bGUiOnsidGFpbCI6eyJuYW1lIjoiaG9vayIsInNpZGUiOiJ0b3AifX19XSxbNCwxLCIiLDAseyJzdHlsZSI6eyJ0YWlsIjp7Im5hbWUiOiJob29rIiwic2lkZSI6InRvcCJ9fX1dLFswLDQsIiIsMCx7InN0eWxlIjp7InRhaWwiOnsibmFtZSI6Imhvb2siLCJzaWRlIjoidG9wIn19fV0sWzIsMywiIiwwLHsic3R5bGUiOnsidGFpbCI6eyJuYW1lIjoiaG9vayIsInNpZGUiOiJ0b3AifX19XSxbMSw1LCJcXHNpbSIsMl0sWzQsMywiXFxzaW0iLDJdLFswLDIsIlxcc2ltIiwyXV0=
\[\begin{tikzcd}
	{\OO(X_i) \otimes_{L_{\fo}} e_{\fo} \cdot W} & {M_i} & {\OO(X_i) \otimes_{L_{\fo}} e_{\fo} \cdot V} \\
	{\OO(X_i) \otimes_{L} W} & {\sF(X_i)} & {\OO(X_i) \otimes_{L} V}
	\arrow[hook, from=1-1, to=1-2]
	\arrow["\sim"', from=1-1, to=2-1]
	\arrow[hook, from=1-2, to=1-3]
	\arrow["\sim"', from=1-2, to=2-2]
	\arrow["\sim"', from=1-3, to=2-3]
	\arrow[hook, from=2-1, to=2-2]
	\arrow[hook, from=2-2, to=2-3]
\end{tikzcd}\]
    where we have defined $M_i$ to be the preimage of $\sF(X_i)$ in $\OO(X_i) \otimes_{L_{\fo}} e_{\fo} \cdot V$. Suppose that
    \[
    z = \sum_{k} s_k \otimes v_k \in M_i \subset \OO(X_i) \otimes_{L_{\fo}} e_{\fo} \cdot V,
    \]
    noting that without loss of generality we may assume that the elements $s_k$ are $L_{i}$-linearly independent by choosing a $L_{i}$ basis of the $L_{i}$-span of $\{s_k\}_k$ and using the isomorphism $L_{\fo} \xrightarrow{\sim} L_i$.
    
    By assumption (4') $\OO_{X_i}$ is irreducible in $\sB_i$, and thus $\OO(X_i)$ is irreducible as an $R$-module (where $R = \sD(X_i) \rtimes H_i$ in case (A) and $R = \OO(X_i) \rtimes H_i$ in case (B)) by \cite[Prop.\ 2.44]{TAY3} as $X_i$ is quasi-Stein. Furthermore
	\[
    \End_{R}(\OO(X_i)) = L_i
	\]
 by the definition of $L_i$. Therefore, for any index $j$ we may apply the Jacobson Density Theorem to assert the existence of some $x_j \in R$ such that $x_j$ acts on the $L_i$-span of $\{ s_k \}_k$ by the projection to the basis element $s_j$. Because $H_i$ acts trivially on $e_{\fo} \cdot V$, $R$ only acts on the first factor of $\OO(X_i) \otimes_{L_{\fo}} e_{\fo} \cdot V$, and therefore $x_j(z) = s_j \otimes v_j \in \sF(X_i)$, and thus $1 \otimes v_j \in \sF(X_i)$, using again that $\OO(X_i)$ is a simple $R$-module. But then also $\OO_{X_{\fo}} \otimes v_j \subset \sF|_{X_{\fo}}$ as $\OO_{X_{\fo}}$ is irreducible in $\sB_{\fo}$ and $H$ acts trivially on $V$. Therefore, each $v_j \in e_{\fo} \cdot W$, as we can describe $e_{\fo} \cdot W$ as
 \[
    e_{\fo} \cdot W = \{v \in V \mid e_{\fo} \cdot v = v \text{ and } s \otimes v \in \sF(U) \text{ for any open } U \subset X_{\fo}, s \in \OO_{X_{\fo}}(U)\}.
 \]
In particular, $z \in \OO(X_i) \otimes_{L_{\fo}} e_{\fo} \cdot W$ and $\OO(X_i) \otimes_{L} W \hookrightarrow \sF(X_i)$ is surjective.

 Finally, to see that $W$ is free of finite rank over $L$, as $G$ acts transitively on the orbits $\fo$, and $\dim_{L_{\fo}} e_{\fo} \cdot W \leq \dim_{L_{\fo}} e_{\fo} \cdot V < \infty$, we may deduce this from Lemma \ref{lem:modoverskewisfree} once we know that the map
 \[
 W \rightarrow \prod_{\fo} e_{\fo} \cdot W
 \]
 is an isomorphism. This is injective, as the corresponding map for $V$ is an isomorphism, $V$ being free over $L$. For surjectivity, given $(w_{\fo})_{\fo}$, from the isomorphism for $V$ there is a unique $v \in V$ with $e_{\fo} \cdot v = w_{\fo}$ for all $\fo \in I$. To see that $v \in W$, suppose that $U \subset X$ is an admissible open subset, and $s = (s_{\fo})_{\fo} \in \prod_{\fo} \OO_{X_\fo}(U \cap X_{\fo}) = \OO_X(U)$. Then
 \begin{align*}
     s \otimes v &= (s_{\fo} \otimes v)_{\fo}, \\
     &= (e_{\fo} \cdot s_{\fo} \otimes v)_{\fo}, \\
     &= (s_{\fo} \otimes e_{\fo} \cdot v)_{\fo}, \\
     &= (s_{\fo} \otimes m_{\fo})_{\fo}
 \end{align*}
 which lies in $\prod_{\fo} \sF(U \cap X_{\fo}) = \sF(U)$ and therefore $v \in W$.
\end{proof}

\begin{remark}\label{rmk:simplerform}
    In case (B), evaluating on the section $1_X \in \OO(X)$ induces an isomorphism
    \[
    \Hom_{\sB}(\OO_X , \sV) \xrightarrow{\sim} \sV(X)^H
    \]
    which is natural in $\sV \in \sA$. In case (A), this further restricts to a natural isomorphism
    \[
    \Hom_{\sB}(\OO_X , \sV) \xrightarrow{\sim} \sV(X)^{\sT(X) = 0,H}
    \]
    whenever the tangent sheaf $\sT$ is generated as an $\OO$-module by global sections \cite[Lem.\ 3.1.4]{AW1}. Here $\sV(X)^{\sT(X) = 0,H}$ denotes the $H$-invariants of
    \[
    \sV(X)^{\sT(X) = 0} = \{v \in \sV(X) \mid \partial(v) = 0 \text{ for any } \partial \in \sT(X)\}.
    \]
    In particular both isomorphisms hold whenever $X$ is a disjoint union of quasi-Stein spaces.
\end{remark}

\subsection{Properties of $\Rep_L^H(G)$}\label{sect:repcat}

We can translate Lemma \ref{lem:injective} and Proposition \ref{prop:mainprop} of the previous section into properties of the categories $\Rep_L^H(G)$.

Suppose in this section that $F = \prod_i F_i$ is a product of fields, $G$ is a group which acts on $F$, and $H$ is a subgroup of $G$. By Lemma \ref{lem:autpresprincipalidem}, $G$ acts on the principal idempotents $\{e_i\}_i$ of $F$, and we assume that this action is transitive. We additionally assume that $G$ stabilises $L \coloneqq F^H$. In particular, this allows us to define a space 
\[
X = \bigsqcup_{i} \Spec(F_i)
\]
with an action of $G$ which satisfies the hypothesis of Section \ref{sect:generalities} in case (B), and from which we can recover $F$ with its action of $G$ as $F = \OO(X)$. 

The category $\Vect^G(X)$ is canonically identified with the category of $F \rtimes G$-modules $M$ for which the natural map 
\[
M \rightarrow \prod_i e_i \cdot M
\]
is an isomorphism, and $\dim_{F_i} e_i \cdot M < \infty$, which by Lemma \ref{lem:modoverskewisfree} is simply the category $\Rep_F(G)$.

\begin{cor}\label{cor:repsinjective}
    Suppose that $V \in \Rep_F(G)$. Then the natural map
    \begin{equation}\label{eqn:repsinjection}
        F \otimes_L V^H \rightarrow V
    \end{equation}
    is injective, $V^H$ is free of finite rank over $L$, and $\rank_L V^H \leq \rank_F V$.
    
    Furthermore, the following are equivalent:
    \begin{itemize}
        \item The map (\ref{eqn:repsinjection}) is an isomorphism,
        \item $F \cdot V^H = V$,
        \item $\rank_L V^H = \rank_F V$.
    \end{itemize}
\end{cor}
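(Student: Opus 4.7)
The plan is to deduce this corollary directly from Lemma \ref{lem:injective} via the identification of $\Rep_F(G)$ with $\Vect^G(X)$ already established at the opening of this section, where $X = \bigsqcup_i \Spec(F_i)$ carries its natural $G$-action. Under this identification a given $V \in \Rep_F(G)$ corresponds to some $\sV \in \Vect^G(X)$, and the pair $(\OO(X), \OO(X)^H)$ recovers $(F, L)$. As remarked in the text, the triple $(X, G, H)$ in case (B) satisfies all the hypotheses of Section \ref{sect:generalities}; in particular condition (4) holds because $\OO(X_{\fo}) = \prod_{i \in \fo} F_i$ has only trivial $H$-stable ideals, since $H$ acts transitively on the principal idempotents indexed by $\fo$ by the very definition of an orbit.

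Since each $X_i = \Spec(F_i)$ is trivially quasi-Stein (being affine), Remark \ref{rmk:simplerform} supplies a natural isomorphism $\Hom_{\sB}(\OO_X, \sV) \xrightarrow{\sim} V^H$, under which the canonical morphism appearing in Lemma \ref{lem:injective} becomes precisely the map $F \otimes_L V^H \to V$ of the statement. Lemma \ref{lem:injective} therefore simultaneously yields the injectivity of (\ref{eqn:repsinjection}) and the fact that $V^H$ is free of finite rank over $L$. The rank bound $\rank_L V^H \leq \rank_F V$ then follows because $F \otimes_L V^H$ is free over $F$ of rank $\rank_L V^H$, and applying any principal idempotent $e_i$ reduces the injection to an injection $F_i^{\rank_L V^H} \hookrightarrow F_i^{\rank_F V}$ of finite-dimensional $F_i$-vector spaces.

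For the equivalences: since the image of (\ref{eqn:repsinjection}) is by construction the $F$-submodule $F \cdot V^H \subset V$ and the map is already injective, it is an isomorphism precisely when $F \cdot V^H = V$. Equivalence with the rank equality reduces, again by applying $e_i$, to the elementary fact that an injection between finite-dimensional vector spaces of the same dimension is an isomorphism. I do not anticipate a significant obstacle here: once the geometric dictionary with Section \ref{sect:generalities} is in place, the entire content of the corollary is extracted directly from Lemma \ref{lem:injective}, Remark \ref{rmk:simplerform}, and linear algebra over the component fields $F_i$.
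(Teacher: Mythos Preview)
Your proof is correct and follows the paper's approach for the first parts: both derive injectivity and freeness of $V^H$ by specialising Lemma \ref{lem:injective} (via Remark \ref{rmk:simplerform}) to the scheme $X = \bigsqcup_i \Spec(F_i)$ in case (B). The one point of difference is the implication $\rank_L V^H = \rank_F V \Rightarrow$ surjectivity. The paper argues that the cokernel $V/W$ is free over $F$ (by checking $V/W \xrightarrow{\sim} \prod_i e_i\cdot(V/W)$ and invoking Lemma \ref{lem:modoverskewisfree}), whence $V \cong W \oplus V/W$ forces $\rank_F(V/W)=0$. Your argument is more direct: since both source and target are free of finite rank over $F$, applying each $e_i$ decomposes the injection as a product of $F_i$-linear injections between spaces of equal finite dimension, hence each is an isomorphism. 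This avoids Lemma \ref{lem:modoverskewisfree} and the splitting step entirely, at the modest cost of silently using that an $F$-linear map between finite-rank free $F$-modules is determined by (and is an isomorphism iff) its $e_i$-components are; this is immediate from $F^r = \prod_i F_i^r$ for finite $r$. Both approaches are valid; yours is the shorter one here.
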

\begin{proof}
    All that remains to show is that if $\rank_L V^H = \rank_F V$, then (\ref{eqn:repsinjection}) is surjective. Writing $W$ for the image of (\ref{eqn:repsinjection}), it is sufficient to show that the quotient $V / W$ is free, as then $V \cong W \oplus V / W$ and hence $V / W$ must have rank $0$. For this, by Lemma \ref{lem:modoverskewisfree} it is sufficient to show that the natural map $V/W \xrightarrow{\sim} \prod_i e_i \cdot (V/W)$ is an isomorphism, which follows directly from the isomorphisms $V \xrightarrow{\sim} \prod_i e_i \cdot V$ and $W \xrightarrow{\sim} \prod_i e_i \cdot W$.
\end{proof}

When $H$ is normal in $G$, $V^H$ is closed under the action of $G$ for any $V \in \Rep_F(G)$, and therefore the invariants functor from $\Rep_F(G)$ to $\Mod_L$ admits a factorisation
\[
(-)^H \colon \Rep_F(G) \rightarrow \Rep^H_L(G).
\]
\begin{cor}\label{cor:repsfunctors}
    Suppose that $F$, $G$ and $H$ are as described at the start of this section. Then:
\begin{enumerate}
    \item
    The functor
    \[
    F \otimes_{L} - \colon \Rep^H_L(G) \rightarrow \Rep_F(G)
    \]
    is exact, monoidal, and fully faithful.
    \item
    Suppose that $H$ is normal in $G$. Then the essential image is the full subcategory of objects $V \in \Rep_F(G)$ for which $F \cdot V^H = V$, and on this full subcategory
    \[
    (-)^H \colon \Rep_F(G) \rightarrow \Rep^H_L(G)
    \]
    is a quasi-inverse to $F \otimes_{L} -$.
    \item 
    The essential image of $F \otimes_{L} -$ is closed under sub-quotients. 
    \end{enumerate}
\end{cor}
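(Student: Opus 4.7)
The plan is to deduce this corollary directly from Proposition \ref{prop:mainprop} applied to the scheme $X = \bigsqcup_i \Spec(F_i)$ constructed immediately before the statement. First I would verify the hypotheses of Section \ref{sect:generalities} in case (B) for the triple $(X, G, H)$: the space $X$ is tautologically the disjoint union of its connected components $X_i = \Spec(F_i)$; $G$ preserves $L = F^H = \OO(X)^H$ by assumption; $G$ acts transitively on $\pi_0(X) = \{X_i\}_i$ by the transitivity assumption on $\{e_i\}_i$ via Lemma \ref{lem:autpresprincipalidem}; and assumption (4')---that each $\OO_{X_i} = F_i$ is irreducible in $\sB_i = \Coh^{H_i}(X_i)$---holds since a sub-$(F_i \rtimes H_i)$-module of $F_i$ is in particular an ideal of the field $F_i$.

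Next I would set up the dictionary. As already observed in the text just above the corollary, the category $\Vect^G(X)$ is canonically identified with $\Rep_F(G)$ via Lemma \ref{lem:modoverskewisfree}, and under this identification the functor $\OO_X \otimes_L -$ corresponds to $F \otimes_L -$. Furthermore, since each $X_i$ is affine (hence quasi-Stein), Remark \ref{rmk:simplerform} provides a natural isomorphism
\[
\Hom_{\sB}(\OO_X, V) \xrightarrow{\sim} V^H
\]
via evaluation at $1_X$, for $V \in \Rep_F(G)$.

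With this dictionary in place, parts (1), (2) and (3) follow immediately from the corresponding parts of Proposition \ref{prop:mainprop}. Part (1) is a direct transport of exactness, monoidality and full faithfulness. For part (2), the essential image characterization ``the injective map $\OO_X \otimes_L \Hom_{\sB}(\OO_X, \sV) \to \sV$ is an isomorphism'' translates to ``$F \otimes_L V^H \to V$ is an isomorphism'', which is exactly the condition $F \cdot V^H = V$ (using that the map is already injective by Corollary \ref{cor:repsinjective}); and the quasi-inverse $\Hom_{\sB}(\OO_X, -)$ becomes the invariants functor $(-)^H$. For part (3), the quasi-Stein hypothesis of Proposition \ref{prop:mainprop}(3) is automatic here because each $X_i = \Spec(F_i)$ is affine.

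There is no real obstacle, as the argument is purely a translation. The only point requiring a moment of care is verifying assumption (4'), but this is immediate from $F_i$ being a field; every other hypothesis is either assumed outright in the statement or is a formal property of the zero-dimensional scheme $X$.
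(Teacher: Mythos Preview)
Your proposal is correct and is exactly the approach the paper intends: the corollary is stated without proof precisely because the preceding paragraphs set up the scheme $X=\bigsqcup_i\Spec(F_i)$, identify $\Vect^G(X)$ with $\Rep_F(G)$, and leave the reader to read off the statement from Proposition~\ref{prop:mainprop}. Your verification of the hypotheses (in particular assumption~(4') via $F_i$ being a field) and your use of Remark~\ref{rmk:simplerform} to identify the quasi-inverse with $(-)^H$ are the details the paper leaves implicit.
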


\section{Differential Operators on Lubin-Tate and Drinfeld Spaces}\label{sect:LTDRbundles}

In this section we relate the action of $\GL_n(F)$ on Drinfeld spaces with the action of the tangent sheaf. We will use this in the following sections to give a description of the categories of Lubin-Tate and Drinfeld bundles in terms of smooth semilinear representations.

\subsection{Notation}
Let $F$ be a finite extension of $\bQ_p$, and let $K$ be a complete non-archimedean field extension of the completion of the maximal unramified extension of $F$. Let $n \geq 1$, and let $D$ be the division algebra over $F$ of invariant $1/n$, with ring of integers $\OO_D$. We write $\pi$ for a uniformiser of $\OO_F$ and $\Pi$ for a uniformiser of $\OO_D$. We also consider the extensions
\[
\breve{F} \subset \breve{F}_1 \subset \breve{F}_2 \subset \cdots
\]
of the completion of the maximal unramified extension of $F$, where $\breve{F}_m$ is the compositum of $\breve{F}$ with $F_m$, the $m$th Lubin-Tate extension of $F$. Following Kohlhaase \cite{KOH}, from now on we will denote:
\begin{multicols}{2}
    \begin{itemize}
        \item $G = \GL_n(F)$,
        \item $G^0 = \ker(\nu_{\pi} \circ \det) \triangleleft G$,
        \item $G_0 = \GL_n(\OO_F)$,
        \item $G_m = 1 + \pi^m M_n(\OO_F)$, for $m \geq 1$,
    \end{itemize}
    \columnbreak
    \begin{itemize}
        \item $H = D^\times$,
        \item $H_0 = \OO_D^\times$,
        \item $H_m = 1 + \pi^m \OO_D$, for $m \geq 1$,
        \item $Z_m = G_m \times H_m$, for $m \geq 1$.
    \end{itemize}
\end{multicols}
We denote the Drinfeld tower by
\[
\Omega \leftarrow \sM \leftarrow \sM_1 \leftarrow \sM_2 \cdots
\]
which we view via base change as a tower of rigid spaces over $K$. The space $\sM_m$ and $\sM$ correspond (in the sense of local Shimura varieties \cite[\S 5.1]{RAPVIE}) to the compact open subgroups $H_m$ and $H_0$ of $H$ respectively, and correspondingly we have that each map $\sM_m \rightarrow \sM$ is a finite \'{e}tale Galois covering with Galois group $H_0 / H_m$. Let
\[
\Omega \xleftarrow{\sim} \sN \leftarrow \sN_1 \leftarrow \sN_2 \cdots 
\]
denote the subtower induced by choosing a connected component $\sN$ of $\sM$ and setting $\sN_m$ to be the preimage of $\sN$ in $\sM_m$, and write $f_m \colon \sN_m \rightarrow \Omega$ for restriction to $\sN_m$ of the $m$th covering map. Note that we are considering a cofinal subtower of that of \cite{TAY3}: what we denote by $\sM_m$ is called $\sM_{nm}$ in \cite{TAY3} (and similarly for $\sN_m$). Let
\[
(\bP^{n-1} \leftarrow \sY_{U})_{U \leq_{\text{c,o}} G} 
\]
be the Lubin-Tate tower, indexed by the compact open subgroups $U$ of $G$. We write $\sY = \sY_0 = \sY_{G_0}$, and $\sY_m = \sY_{G_m}$ for $m \geq 1$. We note that these are denoted $\underline{\sY}$ and $\underline{\sY_m}$ respectively in \cite{KOH}.

\subsection{The Action of $G_0$ and Differential Operators}\label{sect:infintesimalaction}

\begin{lemma}\label{lem:groupgeneratestangentsheaf}
Let $m \geq 1$, and let $U$ be an admissible affinoid open subset of $\sN_m$.

Then there is some $r \geq 1$ such that for any $k \geq r$,
\begin{enumerate}
    \item $G_k$ stabilises $U$,
    \item For any $g \in G_k$, $\log(g^{\OO}_U)$ converges to an element of $\sT(U)$,
    \item The image $\log(G_k)$ generates $\sT(U)$ as an $\OO(U)$-module.
\end{enumerate}
\end{lemma}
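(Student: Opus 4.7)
The plan is to address the three conditions in sequence, leveraging the analytic nature of the $G_0$-action on $\sN_m$. For (1), the action of $G_0$ on $\sN_m$ is induced by the action of $G_0$ on a formal model of the Drinfeld tower, so it is continuous in a strong sense: reduction modulo $\pi^N$ factors through a finite quotient of $G_0$. Consequently, any admissible affinoid $U \subset \sN_m$ is stabilised by some congruence subgroup $G_{r_1}$. For (2), for $k \geq r_1$ and $g \in G_k$, the analyticity of the action on $\OO(U)$ should give a uniform bound $|g^{\OO}_U - \id|_{\text{op}} \leq C \cdot |\pi|^k$ on the operator norm, where $C = C(U)$ depends only on $U$. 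For $k$ large this is less than $p^{-1/(p-1)}$, so
\[
\log(g^{\OO}_U) = -\sum_{n \geq 1} \frac{(\id - g^{\OO}_U)^n}{n}
\]
converges in $\End_K(\OO(U))$, and the standard argument that the logarithm of a Banach-algebra automorphism is a derivation shows $\log(g^{\OO}_U) \in \sT(U)$.

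The main step is (3). The key input is that the infinitesimal action of $\mathfrak{gl}_n(F) = \text{Lie}(G_0)$ on $\sN_m$ yields a morphism
\[
\tau \colon \mathfrak{gl}_n(F) \otimes_F \OO_{\sN_m} \to \sT_{\sN_m}
\]
whose image generates $\sT_{\sN_m}$ as an $\OO_{\sN_m}$-module. To see this, I would use that on $\bP^{n-1}$, where $\GL_n$ acts transitively, the analogous map is surjective (the differential of a transitive action is surjective on tangent spaces), while étaleness of the composite $\sN_m \to \sN \cong \Omega \hookrightarrow \bP^{n-1}$ means the tangent sheaf is the pullback. To conclude, I would identify $\log(g^{\OO}_U)$ with $\tau(\log(g))$, where $\log(g) \in \pi^k M_n(\OO_F) \subset \mathfrak{gl}_n(F)$ is the usual matrix logarithm --- a compatibility between the operator-theoretic logarithm on $\End_K(\OO(U))$ and the matrix logarithm on $G_k$, coming from analyticity of the action and uniqueness of derivatives. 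Then the $\OO(U)$-span of $\log(G_k)$ in $\sT(U)$ equals that of $\tau(\pi^k M_n(\OO_F))$, which equals the $\OO(U)$-span of the image of $\mathfrak{gl}_n(F)$, and this is all of $\sT(U)$.

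The main anticipated obstacle is the compatibility $\log(g^{\OO}_U) = \tau(\log(g))$ in step (3): verifying this requires carefully comparing the two a priori different logarithms through the infinitesimal action, which is a standard calculation in $p$-adic Lie theory but depends crucially on the uniform operator-norm control from (2) to justify term-by-term identification of the two power series expansions.
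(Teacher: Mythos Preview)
Your proposal is correct, but the paper takes a more explicit and elementary route to (3) that avoids the Lie algebra formalism entirely. Rather than invoking the infinitesimal action map $\tau \colon \mathfrak{gl}_n(F) \otimes_F \OO_{\sN_m} \to \sT_{\sN_m}$ and then proving the compatibility $\log(g^{\OO}_U) = \tau(\log g)$, the paper works in coordinates. It embeds $U$ (via the \'etale composite $\sN_m \to \Omega \hookrightarrow \bA^{n-1,\mathrm{an}}$) into a closed ball $\bD_j$ with coordinates $x_1,\dots,x_{n-1}$, on which $\sT(\bD_j)$ is freely generated by $\partial_i = d/dx_i$. For the single elementary matrix $g_{i,k} \in G_k$ acting as the translation $x_i \mapsto x_i + \pi^k$, one reads off directly from the series that $\log(g^{\OO}_{i,k,\bD_j}) = \pi^k \partial_i$. \'Etaleness then lifts the $\partial_i$ uniquely to generators $\partial'_i$ of $\sT(U)$, and continuity of the map $\OO(\bD_j) \to \OO(U)$ carries the logarithm identity upstairs, giving $\partial'_i = \pi^{-k}\log(g^{\OO}_{i,k,U})$. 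Thus only $n-1$ hand-picked unipotent elements are needed, and for those the ``logarithm compatibility'' is a one-line computation rather than a general principle. Your approach buys a cleaner conceptual picture (all of $\mathfrak{gl}_n$ acts, and surjectivity of $\tau$ is the geometric content), at the cost of the very obstacle you anticipate; the paper's approach trades generality for not having to confront that obstacle at all.
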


Here, for $g \in G$, $g^{\OO} \colon \OO_{\sN_m} \rightarrow g^{-1} \OO_{\sN_m}$ is the structure morphism for the action of $G$ on the $G$-equivariant sheaf $\OO_{\sN_m}$, and $g^{\OO}_U \colon \OO(U) \rightarrow \OO(g(U))$ are the sections above $U$, which is a ring automorphism of $\OO(U)$ whenever $g(U) = U$ (see \cite[\S 2.6, \S 2.8]{TAY3}).

\begin{proof}
    The action of $G^0$ on $\sN_m$ is continuous in the sense of \cite[Def.\ 3.1.8]{ARD} by \cite[Cor.\ 7.1]{TAY3}, and therefore the stabiliser of $U$ in $G^0$ is open and thus contains some $G_{s}$.
    Set $A \coloneqq \OO(U)$, and let $\rho \colon G_s \rightarrow \Aut_K(A)$ denote the action map. Choosing a formal model $\sA$ of $A$, this defines a topology on $\Aut_K(A)$ \cite[Def.\ 3.1.3]{ARD} which is independent of the choice of formal model $\sA$ of $A$ \cite[Thm.\ 3.1.5]{ARD}.
    The open subgroup $\sG_{p^{2}}(\sA) \subset \Aut_K(A)$ has the property that for any $\varphi \in \sG_{p^{2}}(\sA)$ the series $\log(\varphi)$ converges and defines an element of $\Der_{\sR}(\sA)$, where $\sR$ is the ring of integers of $K$.
    
    The continuity of the action of $G^0$ on $\sN_m$ further implies that $\rho \colon G_s \rightarrow \Aut_K(A)$ is continuous, and thus there is some $G_r \leq G_s$ with $\rho(G_r) \subset \sG_{p^{2}}(\sA)$. We therefore have that $\log(g^{\OO}_U)$ converges for any $g \in G_r$ to an element of $\Der_\sR(\sA)$, which we consider as a derivation of $A$ via the canonical inclusion $\Der_\sR(\sA) \hookrightarrow \Der_K(A)$ (noting that $A = \sA[1/p]$). 
    
    For the third claim, fix an affine chart $\bA_K^{n-1, \text{an}} \subset \bP_K^{n-1, \text{an}}$ with $\Omega \subset \bA_K^{n-1, \text{an}}$, and let $\{\Omega_j\}_{j \geq 1}$ be the quasi-Stein open covering of $\Omega$ defined by setting $\Omega_j = \Omega \cap \bD_j$, the intersection in $\bA_K^{n-1, \text{an}}$ of $\Omega$ with the affinoid $(n-1)$-dimensional ball $\bD_j = \{x \in \bA_K^n \mid |x| \leq j\}$ of centre $0 \in \bA_K^{n-1, \text{an}}$ and radius $j$. Let $\{U_j\}_{j \geq 1}$ be the induced quasi-Stein open covering of $\sN_m$, where $U_j$ is the preimage of $\Omega_j$ in $\sN_m$ under the $G_0$-equivariant finite \'{e}tale map $\sN_m \rightarrow \sN \xrightarrow{\sim} \Omega$.
    
    Because $U$ is quasi-compact, $U$ is contained in some $U_j$ for some $j \geq 1$, and the composition
    \[
    U \hookrightarrow U_j \rightarrow \Omega_j \hookrightarrow \bD_j
    \]
    is \'{e}tale and $G^0$-equivariant. The tangent sheaf $\sT(\bD_j)$ is generated over $\OO(\bD_j)$ by the derivations
    \[
    \partial_i \coloneqq \frac{d}{dx_i} \colon \OO(\bD_j) \rightarrow \OO(\bD_j),
    \]
    for coordinates $x_1, ... ,x_{n-1}$ of $\bA^{n-1, \text{an}}$. By the same argument as above, there is some $G_k$ such that $G_k$ stabilises $\bD_j$ and $\log(g^{\OO}_{\bD_j})$ converges for any $g \in G_k$ which (after potentially going to a smaller congruence subgroup) we may assume is the same $G_k$ as above. From the formula for the logarithm, 
    \[
    \pi^k \partial_i = \log(g^{\OO}_{i,k, \bD_j})
    \]
    where $g_{i,k} \in G_k$ is the elementary matrix in $G_k$ which acts by the M\"{o}bius transformation which sends $x_i \mapsto x_i + \pi^k$, and fixes each other $x_j$. 

By \cite[Lem.\ 2.12]{TAY3}, because the morphism $U \rightarrow \bD_j$ is \'{e}tale, for each $i$ there is a unique derivation $\partial'_i$ of $\OO(U)$ which restricts to $\partial_i$ on the image of $\OO(\bD_j)$ in $\OO(U)$, and these generate $\sT(U)$ as an $\OO(U)$-module because the derivations $\partial_i$ generate $\sT(\bD_j)$ as an $\OO(\bD_j)$-module. Because the morphism $\OO(\bD_j) \rightarrow \OO(U)$ is continuous (being a morphism of affinoid algebras), $\log(g^{\OO}_{i,k,U})$ restricts to $\log(g^{\OO}_{i,k,\bD_j})$ on the image of $\OO(\bD_j)$ in $\OO(U)$. In particular, from the uniqueness of $\partial'_i$ described above, $\partial'_i$ is explicitly given by
\[
\partial'_i = \pi^{-k} \log(g^{\OO}_{i,k,U}),
\]
and therefore $\log(G_k)$ generates $\sT(U)$ as an $\OO(U)$-module.
\end{proof}

\begin{cor}\label{cor:irred}
    For any connected component $X$ of $\sN_m$, $\OO_{X}$ is irreducible as a $G_m\text{-}\OO_{X}$-module.
\end{cor}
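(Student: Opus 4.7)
The plan is to reduce the $G_m$-equivariant problem to a $\sD_X$-module problem. Specifically, I would show that any nonzero $G_m$-equivariant coherent $\OO_X$-submodule $\sI \subset \OO_X$ is automatically a $\sD_X$-submodule, and then conclude using the standard fact that $\OO_X$ is irreducible as a $\sD_X$-module on the smooth connected rigid-analytic space $X$.

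First, I would fix an admissible affinoid open $U \subset X$ and apply Lemma \ref{lem:groupgeneratestangentsheaf} to obtain some $k \geq m$ such that $G_k$ stabilises $U$, each logarithm $\log(g^{\OO}_U)$ with $g \in G_k$ converges to an element of $\sT(U)$, and the image $\log(G_k)$ generates $\sT(U)$ as an $\OO(U)$-module. Next I would transfer the $G_m$-stability of $\sI(U)$ into $\sT(U)$-stability: since $G_k \subset G_m$ preserves $\sI(U)$, each operator $g^{\OO}_U - 1$ with $g \in G_k$ does too, and because $\sI(U) \subset \OO(U)$ is a finitely generated ideal in an affinoid algebra it is closed in the canonical Banach topology, so the convergent series
\[
\log(g^{\OO}_U) = \sum_{n \geq 1} \frac{(-1)^{n+1}}{n} (g^{\OO}_U - 1)^n
\]
also preserves $\sI(U)$. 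Combining this with the $\OO(U)$-stability of $\sI(U)$ and the fact that $\log(G_k)$ generates $\sT(U)$ over $\OO(U)$ yields $\sT(U) \cdot \sI(U) \subset \sI(U)$. Running this argument over a basis of admissible affinoid opens of $X$ shows that $\sI$ is a $\sD_X$-submodule of $\OO_X$.

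The final step is to invoke the irreducibility of $\OO_X$ as a $\sD_X$-module — a standard fact on a connected smooth rigid-analytic space, which can be seen locally on an affinoid chart by a Taylor-expansion / étaleness argument showing that any nonzero $\sD$-ideal must contain a unit. This forces $\sI = \OO_X$. The only mildly delicate technical ingredient is the closedness of coherent submodules in the canonical affinoid topology, which is routine; beyond that, the argument is essentially a repackaging of Lemma \ref{lem:groupgeneratestangentsheaf} together with the $\sD$-module simplicity of $\OO_X$.
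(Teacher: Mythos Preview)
Your proposal is correct and follows essentially the same approach as the paper: reduce to showing that any $G_m$-stable $\OO_X$-submodule is $\sD_X$-stable by using Lemma~\ref{lem:groupgeneratestangentsheaf}, the closedness of ideals in affinoid algebras, and the convergent logarithm series, then invoke the $\sD_X$-simplicity of $\OO_X$ on a connected smooth space. The only minor difference is that you restrict to coherent submodules, whereas the paper proves irreducibility in the larger category of all $G_m$-$\OO_X$-submodules; but since $\OO(U)$ is noetherian any ideal is automatically finitely generated, so your closedness argument goes through without the coherence hypothesis, and in any case the paper itself remarks that only the coherent version is needed for the applications.
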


\begin{remark}
    For the above to make sense, we note that any connected component of $\sN_m$ is stabilised by $G_m$. Indeed, when $K$ contains $\breve{F}_m$, $\pi_0(\sN_m)$ is canonically identified with $\OO_F / (1 + \pi^m \OO_F)$, and the action of $G_0$ on $\pi_0(\sN_m)$ corresponds to the action of $g \in G_0$ on $\OO_F / (1 + \pi^m \OO_F)$ by left multiplication by $\det(g)$ \cite[Thm.\ 7.3]{TAY3}. In particular, $G_m$ acts on $\pi_0(\sN_m)$ trivially. For a general $K$, there is a $G_0$-equivariant surjection $\pi_0(\sN_{m, K\breve{F}_m}) \twoheadrightarrow \pi_0(\sN_m)$, and therefore $G_m$ also acts on $\pi_0(\sN_m)$ trivially.
\end{remark}

\begin{proof}
    To prove statement (1), suppose that $\sV$ is a non-trivial $G_m \text{-}\OO_{X}$-submodule of $\OO_{X}$. We show that $\sV$ is in fact a $\sD_{X}$-submodule of $\OO_{X}$, from which it follows that $\sV = \OO_{X}$ as $\OO_{X}$ is irreducible as a $\sD_{X}$-module because $X$ is connected \cite[Lem.\ 2.37]{TAY3}.

    To this end, let $U \subset X$ be a connected admissible open affinoid subset. By Lemma \ref{lem:groupgeneratestangentsheaf}, we can find some $G_k \subset G_m$ such that $G_k$ stabilises $U$ and $\log(G_k)$ generates $\sT(U)$ as an $\OO(U)$-module, and thus it is sufficient to show that for any $g \in G_k$ and $v \in \sV(U)$, $\log(g_U^\OO)(v) \in \sV(U)$. Because $\OO(U)$ is a noetherian Banach algebra, $\sV(U)$ is closed in the subspace topology \cite[Prop.\ 2.1]{STALG}, and therefore
    \[
    \log(g_U^\OO)(v) = \sum_{k = 1}^{\infty} \frac{(-1)^{k+1}}{k} (g_U^\OO - 1)^k(v) \in \sV(U),
    \]
    as each $(g_U^\OO - 1)^k(v) \in \sV(U)$ by the assumption that $\sV \subset \OO_X$ is a $G_m \text{-}\OO_{X}$-submodule.
\end{proof}

\begin{remark}
    For our applications, we will only actually need the consequence that $\OO_{\sN_m}$ is irreducible in $\Coh^{G_m}(\sN_m)$, not in the larger category $\Mod(G_m \text{-} \OO_{\sN_m})$ as we have proven here.
\end{remark}

\section{$G_0$-Equivariant Lubin-Tate Bundles on $\Omega$}\label{sect:DRside0}

In this section, we show that the category of $G_0$-equivariant Lubin-Tate bundles on $\Omega$ introduced by Kohlhaase is equivalent to a certain category of smooth semilinear representations of $G_0 \times H_0$.

\begin{defn}
    For $m \geq 1$, we set $K_m = c(\sN_m)$.
\end{defn}

Here $c(X)$ is the $K$-algebra of global sections of the sheaf of constant functions \cite[Def.\ 3.1]{TAY3}.

\begin{defn}
    An object $\sV$ of $\Vect^{G_0}(\Omega)$ or $\VectCon^{G_0}(\Omega)$ is \emph{Lubin-Tate} if the natural map
    \begin{equation}\label{eqn:caninjectionLT}
        \OO_{\sN_m} \otimes_{K_m} (f_m^*\sV)(\sN_m)^{G_m} \rightarrow f_m^*\sV
    \end{equation}
    is an isomorphism for some $m \geq 1$, in which case we say that $\sV$ is \emph{Lubin-Tate of level $m$}. We let
    \begin{align*}
    \VectCon^{G_0}_{\LT}(\Omega) &= \bigcup_{m \geq 1} \VectCon^{G_0}_{\LT,m}(\Omega), \\
    \Vect^{G_0}_{\LT}(\Omega) &= \bigcup_{m \geq 1} \Vect^{G_0}_{\LT,m}(\Omega),
\end{align*}
denote the corresponding full subcategories of $G_0$-equivariant Lubin-Tate bundles on $\Omega$.
\end{defn}

\begin{remark}
In particular, by \cite[Lem.\ 3.11]{KOH} and the description of $K_m$ below, the category $\Vect^{G_0}_{\LT}(\Omega)$ we have defined agrees with Kohlhaase's category of $G_0$-equivariant Lubin-Tate bundles.
\end{remark}

The product of fields $K_m$ is described by
\[
K_m = c(\sN_m) = c(\sN_m)^{G_m} = \OO(\sN_m)^{G_m} = K \otimes_{\breve{F}} c(\sN_{m,\breve{F}}) =  K \otimes_{\breve{F}} \breve{F}_m,
\]
by a result of Kohlhaase \cite[Thm.\ 2.8(ii)]{KOH}. The group $G_0 \times H_0$ acts on $K_m$ through the right factor, via the composition of the homomorphism
\[
G_0 \times H_0 \rightarrow \OO_F / (1 + \pi^m \OO_F), \qquad (g,h) \mapsto \det(g) \Nrd(h)^{-1},
\]
with the Lubin-Tate isomorphism
\[
\OO_F / (1 + \pi^m \OO_F) \xrightarrow{\sim} \Gal(\breve{F}_m / \breve{F}).
\]
With this action, we can consider the semilinear representation category $\Rep_{K_m}^{Z_m}(G_0 \times H_0)$.

We can take the triple $(X, G, H)$ of Section \ref{sect:generalities} to be $(\sN_m, G_0 \times H_0 / H_m, G_m)$, which in either case (A) or (B) has $L = K_m$ by the above discussion. This is preserved by the action of $G_0 \times H_0$, and in either case the triple satisfies the assumptions (1), (2), (3) and (4) of Section \ref{sect:generalities} by Corollary \ref{cor:irred}. In particular, we may compose the functors $(\OO_{\sN_m} \otimes_{K_m} -)$ of Section \ref{sect:generalities} with the equivalence of \cite[Prop.\ 2.53]{TAY3} (which also holds for vector bundles without connection, using the same proof), to obtain functors
\begin{equation}\label{eqn:firstpairoffunctors}
\begin{aligned}
    (\OO_{\sN_m} \otimes_{K_m} -)^{H_0} &\colon \Rep^{Z_m}_{K_m}(G_0 \times H_0) \rightarrow \VectCon^{H_0 / H_m \times G_0}(\sN_m) \xrightarrow{\sim} \VectCon^{G_0}(\Omega), \\
    (\OO_{\sN_m} \otimes_{K_m} -)^{H_0} &\colon \Rep^{Z_m}_{K_m}(G_0 \times H_0) \rightarrow \Vect^{H_0 / H_m \times G_0}(\sN_m) \xrightarrow{\sim} \Vect^{G_0}(\Omega).
\end{aligned}
\end{equation}

\begin{prop}\label{prop:levelmLTbundles}
    For $m \geq 1$, the map (\ref{eqn:caninjectionLT}) is always injective, and the composite functors (\ref{eqn:firstpairoffunctors}) are exact, monoidal, and fully faithful. The essential images are $\VectCon^{G_0}_{\LT,m}(\Omega)$ and $\Vect^{G_0}_{\LT,m}(\Omega)$ respectively, both of which are closed under sub-quotients. In particular, the canonical restriction map
    \[
    \VectCon^{G_0}_{\LT,m}(\Omega) \rightarrow \Vect^{G_0}_{\LT,m}(\Omega)
    \]
    is an equivalence of categories.
\end{prop}

\begin{proof}
    The map (\ref{eqn:caninjectionLT}) is the canonical injection of Lemma \ref{lem:injective}, using Remark \ref{rmk:simplerform} and the fact that $\sN_m$ is quasi-Stein. Then the result follows directly from Proposition \ref{prop:mainprop}, using that $\sN_m$ is quasi-Stein of finite dimension to give that the image is closed under sub-quotients.
\end{proof}
For $m' \geq m$, there is a fully faithful inclusion functor
\[
K_{m'} \otimes_{K_m} - \colon \Rep^{Z_m}_{K_m}(G_0 \times H_0) \rightarrow \Rep^{Z_{m'}}_{K_{m'}}(G_0 \times H_0),
\]
and this is compatible with inclusion of Lubin-Tate bundles of levels $m$ and $m'$ in the sense that
% https://q.uiver.app/#q=WzAsNixbMCwxLCJcXFJlcF57Wl9tfV97S19tfShHXzAgXFx0aW1lcyBIXzApICJdLFsxLDEsIlxcVmVjdENvbl57R18wfV97XFxMVCxtfShcXE9tZWdhKSJdLFswLDAsIlxcUmVwXntaX3ttJ319X3tLX3ttJ319KEdfMCBcXHRpbWVzIEhfMCkiXSxbMSwwLCJcXFZlY3RDb25ee0dfMH1fe1xcTFQsbSd9KFxcT21lZ2EpIl0sWzIsMCwiXFxWZWN0XntHXzB9X3tcXExULG0nfShcXE9tZWdhKSJdLFsyLDEsIlxcVmVjdF57R18wfV97XFxMVCxtfShcXE9tZWdhKSJdLFswLDEsIlxcc2ltIl0sWzAsMiwiIiwyLHsic3R5bGUiOnsidGFpbCI6eyJuYW1lIjoiaG9vayIsInNpZGUiOiJ0b3AifX19XSxbMSwzLCIiLDAseyJzdHlsZSI6eyJ0YWlsIjp7Im5hbWUiOiJob29rIiwic2lkZSI6InRvcCJ9fX1dLFsyLDMsIlxcc2ltIl0sWzMsNCwiXFxzaW0iXSxbMSw1LCJcXHNpbSJdLFs1LDQsIiIsMSx7InN0eWxlIjp7InRhaWwiOnsibmFtZSI6Imhvb2siLCJzaWRlIjoidG9wIn19fV1d
\[\begin{tikzcd}[ampersand replacement=\&]
	{\Rep^{Z_{m'}}_{K_{m'}}(G_0 \times H_0)} \& {\VectCon^{G_0}_{\LT,m'}(\Omega)} \& {\Vect^{G_0}_{\LT,m'}(\Omega)} \\
	{\Rep^{Z_m}_{K_m}(G_0 \times H_0) } \& {\VectCon^{G_0}_{\LT,m}(\Omega)} \& {\Vect^{G_0}_{\LT,m}(\Omega)}
	\arrow["\sim", from=1-1, to=1-2]
	\arrow["\sim", from=1-2, to=1-3]
	\arrow[hook, from=2-1, to=1-1]
	\arrow["\sim", from=2-1, to=2-2]
	\arrow[hook, from=2-2, to=1-2]
	\arrow["\sim", from=2-2, to=2-3]
	\arrow[hook, from=2-3, to=1-3]
\end{tikzcd}\]
commutes. In particular, we see that if $\sV \in \Vect^{G_0}(\Omega)$ is Lubin-Tate of level $m$, it is Lubin-Tate of level $m'$ for all $m' \geq m$ (and similarly for $\VectCon^{G_0}(\Omega)$). We can use this to describe the whole categories $\Vect^{G_0}_{\LT}(\Omega)$ and $\VectCon^{G_0}_{\LT}(\Omega)$ in terms of smooth semilinear representations of $G_0 \times H_0$.

\begin{defn}
    We write $\breve{F}_{\infty}$ for the union of the fields $\breve{F}_m$, and set
    \[
    K_{\infty} = \varinjlim_{m \geq 1} K_m = K \otimes_{\breve{F}} \breve{F}_{\infty},
    \]
    which we consider with its natural action of $G_0 \times H_0$ through the factor $\breve{F}_{\infty}$ as above.
\end{defn}

For each $m \geq 1$, there is a canonical inclusion functor
\[
K_{\infty} \otimes_{K_m} - \colon \Rep^{Z_m}_{K_m}(G_0 \times H_0) \rightarrow \Rep_{K_{\infty}}(G_0 \times H_0),
\]
the essential image of which we denote by $\Rep^m_{K_{\infty}}(G_0 \times H_0)$.
\begin{lemma}\label{lem:unionofcatsK}
    The category $\Rep^m_{K_{\infty}}(G_0 \times H_0)$ is intrinsically described as
    \[
    \Rep^m_{K_{\infty}}(G_0 \times H_0) = \{V \in \Rep_{K_{\infty}}(G_0 \times H_0) \mid K_{\infty} \cdot V^{Z_m} = V \},
    \]
    and is closed under sub-quotients. Furthermore, on this full subcategory 
    \[
    (-)^{Z_m} \colon \Rep^m_{K_{\infty}}(G_0 \times H_0) \rightarrow  \Rep^{Z_m}_{K_m}(G_0 \times H_0)
    \]
    is a quasi-inverse to the exact, monoidal, fully faithful functor $K_{\infty} \otimes_{K_m} -$, and in $\Rep_{K_{\infty}}(G_0 \times H_0)$
    \[
    \Rep^{\sm}_{K_{\infty}}(G_0 \times H_0) = \bigcup_{m \geq 1} \Rep^{m}_{K_{\infty}}(G_0 \times H_0).
    \]
\end{lemma}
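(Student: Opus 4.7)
The plan is to apply Corollary \ref{cor:repsfunctors} at each finite level $m' \geq m$ with data $(F, L, G, H) = (K_{m'}, K_m, G_0 \times H_0, Z_m)$ and then pass to the colimit. First I would verify the hypotheses: $K_{m'} = K \otimes_{\breve F} \breve F_{m'}$ is a finite product of fields on whose principal idempotents $G_0 \times H_0$ acts transitively (via $\pi_0(\sN_{m'}) \cong \OO_F/(1 + \pi^{m'}\OO_F)$); $Z_m$ is normal in $G_0 \times H_0$; and $K_{m'}^{Z_m} = K_m$, since the image of $Z_m$ in $\Gal(\breve F_{m'}/\breve F)$ is precisely $(1+\pi^m\OO_F)/(1+\pi^{m'}\OO_F)$ by Lubin--Tate theory. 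Corollary \ref{cor:repsfunctors} then provides at each finite level an exact, monoidal, fully faithful functor $K_{m'} \otimes_{K_m} -$ with quasi-inverse $(-)^{Z_m}$ and essential image $\{V : K_{m'} V^{Z_m} = V\}$ closed under sub-quotients.

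Passing to the colimit, the exactness and monoidality of $K_\infty \otimes_{K_m} -$ are formal, since $K_\infty$ is a filtered colimit of $K_m$-flat rings. The key computation $(K_\infty \otimes_{K_m} W)^{Z_m} = W$ for $W \in \Rep^{Z_m}_{K_m}(G_0 \times H_0)$ follows by commuting $Z_m$-invariants with the filtered colimit (valid because $Z_m$-fixedness in $K_\infty \otimes_{K_m} W$ of an element at level $m'$ coincides with $Z_m$-fixedness in $K_{m'} \otimes_{K_m} W$, by injectivity of the transition maps) and then applying the finite-level statement. Fully faithfulness then follows by the adjunction with restriction, and one direction of the essential image description is immediate from this.

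The main technical obstacle will be the reverse direction: given $V$ of $K_\infty$-rank $n$ with $K_\infty V^{Z_m} = V$, showing that $V^{Z_m}$ is $K_m$-free of rank $n$ and that the natural surjection $K_\infty \otimes_{K_m} V^{Z_m} \twoheadrightarrow V$ is an isomorphism. My approach is to choose a finite $K_\infty$-spanning set $\{w_1, \ldots, w_N\} \subset V^{Z_m}$ of $V$, form $V_0 \coloneqq K_m \cdot \{w_j\} \subset V^{Z_m}$, and note that $V_0$ is $K_m$-free of some rank $r \geq n$ by semisimplicity of $K_m$ together with the surjection $K_\infty \otimes_{K_m} V_0 \twoheadrightarrow V$. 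To obtain $r = n$, I would fix a finite level $m'$ at which $V_0$ is contained in a $K_{m'}$-free submodule $V^{m'} \subset V$ of rank $n$; faithful flatness of $K_\infty/K_{m'}$ forces $K_{m'} V_0 = V^{m'}$, so the surjection $K_{m'} \otimes_{K_m} V_0 \twoheadrightarrow V^{m'}$ has a $Z_m$-stable kernel whose $Z_m$-invariants compute as the kernel of the inclusion $V_0 \hookrightarrow V^{Z_m}$, and hence vanish. Applying Corollary \ref{cor:repsfunctors}(3) orbit-wise on the idempotents of $K_{m'}$ (as per the remark following the assumptions of Section \ref{sect:generalities}) then forces the kernel itself to vanish, giving $r = n$. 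The same argument identifies $V^{Z_m}$ with $V_0$: any $v \in V^{Z_m}$ expressed in the resulting $K_\infty$-basis of $V$ drawn from $V_0$ has $Z_m$-fixed coefficients, which must therefore lie in $K_m$.

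Closure under sub-quotients then follows: quotients are handled by exactness, and for a sub $W \subset V = K_\infty \otimes_{K_m} V_0$ I would intersect with each $K_{m'} \otimes_{K_m} V_0 \subset V$ and apply Corollary \ref{cor:repsfunctors}(3) at the finite level; the identity $(K_{m'} \otimes_{K_m} V_0)^{Z_m} = V_0$ forces the $Z_m$-invariants of these intersections to stabilise uniformly at $W \cap V_0 = W^{Z_m}$, so taking the colimit gives $W = K_\infty \otimes_{K_m} W^{Z_m}$. Finally, the equality $\Rep^{\sm}_{K_\infty}(G_0 \times H_0) = \bigcup_m \Rep^m_{K_\infty}(G_0 \times H_0)$ is immediate because $\{Z_m\}_{m \geq 1}$ is a neighborhood basis of the identity in $G_0 \times H_0$: any open subgroup contains some $Z_m$, and the inclusion $V^H \subset V^{Z_m}$ makes the two conditions equivalent.
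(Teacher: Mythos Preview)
The paper's proof is a single line: apply Corollary~\ref{cor:repsfunctors} directly with $F = K_\infty$, using $K_\infty^{Z_m} = K_m$, and then note that the $Z_m$ form a neighbourhood basis. Your approach instead applies Corollary~\ref{cor:repsfunctors} at each finite level $K_{m'}$ and passes to the colimit. This extra care is arguably warranted: the hypotheses of Section~\ref{sect:repcat} require $F$ to be a product of fields, and while $K_\infty = K \otimes_{\breve F} \breve F_\infty$ is a field when $K = \breve F$, for general $K$ (for instance $K$ algebraically closed, so that each $K_m \cong K^{|\OO_F^\times/(1+\pi^m\OO_F)|}$) it is only a filtered colimit of finite products of fields and $\Spec K_\infty$ is a non-discrete profinite set. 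Your finite-level strategy is thus a legitimate way to make the paper's intended argument rigorous for arbitrary $K$.

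There is one small gap in your execution. You assert that $V_0 = K_m \cdot \{w_j\}$ is $K_m$-free ``by semisimplicity of $K_m$ together with the surjection $K_\infty \otimes_{K_m} V_0 \twoheadrightarrow V$'', but semisimplicity of $K_m = \prod_j L_j$ only gives that $V_0$ is projective, i.e.\ $V_0 \cong \prod_j L_j^{d_j}$, and the surjection only forces each $d_j \geq n$, not that the $d_j$ coincide. Fortunately this does no damage: your subsequent argument establishes the isomorphism $K_{m'} \otimes_{K_m} V_0 \cong V^{m'}$ with $V^{m'}$ free of rank $n$ over $K_{m'}$, and since $K_{m'}$ is faithfully flat (indeed free) over $K_m$ this forces $V_0$ to be projective of constant rank $n$, hence free over the product of fields $K_m$. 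So simply defer the freeness claim until after that step and the argument goes through.
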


\begin{proof}
    This follows directly from Corollary \ref{cor:repsfunctors} as $K_m = K_{\infty}^{Z_m}$. The final claim follows from the description of $\Rep^m_{K_{\infty}}(G_0 \times H_0)$, and the fact that the subgroups $Z_m$ form a neighbourhood basis of the identity in $G_0 \times H_0$.
\end{proof}

We therefore have equivalences
\[
\Phi_{\LT,m}^0 \coloneqq (\OO_{\sN_m} \otimes_{K_m} (-)^{Z_m})^{H_0} \colon \Rep^{m}_{K_{\infty}}(G_0 \times H_0) \xrightarrow{\sim} \VectCon^{G_0}_{\LT,m}(\Omega)
\]
which are compatible by the above discussion and we may therefore take the direct limit
\[
\Phi^0_{\LT} \coloneqq \varinjlim_{m \geq 1} \Phi_{\LT, m}^0 \colon \Rep^{\sm}_{K_{\infty}}(G_0 \times H_0) \rightarrow \VectCon^{G_0}_{\LT}(\Omega).
\]
Because $\Phi_{\LT,m}^0$ is an equivalence at each level, we have the following.

\begin{thm}\label{thm:mainthmLT0}
    The functors
    \[
    \Rep^{\sm}_{K_{\infty}}(G_0 \times H_0) \xrightarrow{\Phi^0_{\LT}} \VectCon^{G_0}_{\LT}(\Omega) \rightarrow \Vect^{G_0}_{\LT}(\Omega)
    \]
    are equivalences of categories.
\end{thm}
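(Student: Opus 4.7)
The plan is to assemble the theorem from what has already been established at each finite level $m$, by expressing both source and target as filtered unions of full subcategories that are matched up by the level--$m$ equivalences $\Phi^0_{\LT,m}$.

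First I would note that by Lemma \ref{lem:unionofcatsK} the source category is a filtered union
\[
\Rep^{\sm}_{K_{\infty}}(G_0 \times H_0) = \bigcup_{m \geq 1} \Rep^{m}_{K_{\infty}}(G_0 \times H_0),
\]
and the transition functors $\Rep^{m}_{K_{\infty}}(G_0 \times H_0) \hookrightarrow \Rep^{m'}_{K_{\infty}}(G_0 \times H_0)$ for $m' \geq m$ are fully faithful inclusions of full subcategories. By the very definition in the excerpt, $\VectCon^{G_0}_{\LT}(\Omega) = \bigcup_{m \geq 1} \VectCon^{G_0}_{\LT,m}(\Omega)$ is also a filtered union along fully faithful inclusions (and similarly for $\Vect^{G_0}_{\LT}(\Omega)$).

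Next I would invoke Proposition \ref{prop:levelmLTbundles} together with the reformulation provided by Lemma \ref{lem:unionofcatsK}: the composite
\[
\Phi^0_{\LT,m} = (\OO_{\sN_m} \otimes_{K_m} (-)^{Z_m})^{H_0}
\]
is an equivalence $\Rep^{m}_{K_{\infty}}(G_0 \times H_0) \xrightarrow{\sim} \VectCon^{G_0}_{\LT,m}(\Omega)$, and the commutative diagram displayed just before the theorem shows that these equivalences are compatible with the inclusions as $m$ varies. A direct limit of equivalences of categories along compatible fully faithful inclusions is an equivalence, so passing to the union gives the first assertion. Concretely: essential surjectivity of $\Phi^0_{\LT}$ follows because any $\sV \in \VectCon^{G_0}_{\LT}(\Omega)$ lies in some $\VectCon^{G_0}_{\LT,m}(\Omega)$ and hence is in the image of $\Phi^0_{\LT,m}$; fullness and faithfulness follow because given any two objects in the source one can find a common level $m$ at which they both lie, where $\Phi^0_{\LT,m}$ is fully faithful.

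For the second arrow $\VectCon^{G_0}_{\LT}(\Omega) \to \Vect^{G_0}_{\LT}(\Omega)$, Proposition \ref{prop:levelmLTbundles} already provides, for each $m \geq 1$, an equivalence
\[
\VectCon^{G_0}_{\LT,m}(\Omega) \xrightarrow{\sim} \Vect^{G_0}_{\LT,m}(\Omega),
\]
and these equivalences are evidently compatible with the level inclusions, so again the direct limit is an equivalence. There is no genuine obstacle here: all the real content (irreducibility of $\OO_{\sN_m}$ in the equivariant category, Kohlhaase's identification $\OO(\sN_m)^{G_m} = K_m$, and the descent to $\Omega$ from $\sN_m$) is already packaged into Proposition \ref{prop:levelmLTbundles}; the theorem itself is the formal step of gluing these level-wise equivalences into one statement over all of $\Rep^{\sm}_{K_{\infty}}(G_0 \times H_0)$.
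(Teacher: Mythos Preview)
Your proposal is correct and follows exactly the paper's approach: the paper states the theorem immediately after observing that ``$\Phi_{\LT,m}^0$ is an equivalence at each level'' and defining $\Phi^0_{\LT}$ as the direct limit of these compatible equivalences, so the content is entirely in Proposition~\ref{prop:levelmLTbundles} and Lemma~\ref{lem:unionofcatsK}, which you have correctly identified and assembled. Your write-up simply makes explicit the formal passage to the filtered union that the paper leaves implicit.
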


\subsection{$H_0$-Equivariant Drinfeld Bundles on $\sY$}\label{sect:LTside0}

Kohlhaase defined equivalences,
\[
\bD_{\LT} \colon \Vect^{G_0}_{\LT}(\Omega) \xleftrightarrow{\sim} \Vect^{H_0}_{\Dr}(\sY) : \! \bD_{\Dr},
\]
where $\Vect^{H_0}_{\Dr}(\sY)$ is an analogously defined category of $H_0$-equivariant \emph{Drinfeld bundles} on $\sY$. 

We can similarly define, for each level $m \geq 1$,
\[
(\OO_{\sY_m} \otimes_{K_m} -)^{G_0} \colon \Rep^{Z_m}_{K_m}(G_0 \times H_0) \rightarrow \Vect^{G_0/G_m \times H_0}(\sY_m) \xrightarrow{\sim} \Vect^{H_0}(\sY)
\]
with essential image denoted $\Vect^{H_0}_{\Dr,m}(\sY)$, and have functors
\[
\Phi_{\Dr,m}^0 \coloneqq (\OO_{\sY_m} \otimes_{K_m} (-)^{Z_m})^{G_0} \colon \Rep^{m}_{K_{\infty}}(G_0 \times H_0) \rightarrow \Vect^{H_0}_{\Dr,m}(\sY)
\]
for which we can take the direct limit
\[
\Phi^0_{\Dr} \coloneqq \varinjlim_{m \geq 1} \Phi_{\Dr, m}^0 \colon \Rep^{\sm}_{K_{\infty}}(G_0 \times H_0) \rightarrow \Vect^{H_0}_{\Dr}(\sY).
\]

We can deduce that $\Phi^0_{\Dr}$ and each $\Phi_{\Dr,m}^0$ are equivalences from the following compatibility result.

\begin{prop}\label{prop:compatiblewithequiv}
For any $m \geq 1$, the composition
\[
\Rep^{Z_m}_{K_m}(G_0 \times H_0) \xrightarrow{\Phi^0_{\LT,m}} \Vect^{G_0}_{\LT}(\Omega) \xrightarrow{\bD_{\LT}}  \Vect^{H_0}_{\Dr,m}(\sY)
\]
is naturally isomorphic to $\Phi_{\Dr,m}^0$ and $\Phi_{\Dr,m}^0$ is an equivalence. In particular, the composition 
\[
\Rep_{K_{\infty}}^{\sm}(G_0 \times H_0) \xrightarrow{\Phi^0_{\LT}} \Vect^{G_0}_{\LT}(\Omega) \xrightarrow{\bD_{\LT}} \Vect^{H_0}_{\Dr}(\sY)
\]
is naturally isomorphic to $\Phi^0_{\Dr}$ and $\Phi^0_{\Dr}$ is an equivalence.
\end{prop}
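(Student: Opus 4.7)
The plan is to check the compatibility $\bD_{\LT} \circ \Phi^0_{\LT,m} \cong \Phi^0_{\Dr,m}$ at each finite level $m \geq 1$ by tracing through Kohlhaase's definition of $\bD_{\LT}$, and then pass to the direct limit in $m$. The point is that both functors are determined by the same data --- a $K_m$-semi-linear $(G_0 \times H_0)$-representation --- placed on one or the other side of the Drinfeld / Lubin-Tate tower, so the identification is essentially formal once the correct level-$m$ description of $\bD_{\LT}$ is in hand.

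For the first step, I would recall that Kohlhaase's equivalence $\bD_{\LT}$ at level $m$ is constructed via the $(G \times H)$-equivariant isomorphism between the Drinfeld and Lubin-Tate towers at infinite level, which at level $m$ identifies the trivialisation data on both sides. Concretely, writing $g_m \colon \sY_m \rightarrow \sY$ for the $m$th Lubin-Tate covering map, for $\sV \in \Vect^{G_0}_{\LT,m}(\Omega)$ with $M \coloneqq (f_m^*\sV)(\sN_m)^{G_m}$, one has a canonical $(G_0 \times H_0)$-equivariant isomorphism
\begin{equation*}
g_m^*\bD_{\LT}(\sV) \;\cong\; \OO_{\sY_m} \otimes_{K_m} M,
\end{equation*}
and $\bD_{\LT}(\sV)$ is the $(G_0/G_m)$-descent of the right-hand side along $g_m$.

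Applied to $\sV = \Phi^0_{\LT,m}(V)$ for $V \in \Rep^m_{K_\infty}(G_0 \times H_0)$, the construction of $\Phi^0_{\LT,m}$ via Proposition \ref{prop:levelmLTbundles} identifies $f_m^*\sV$ with $\OO_{\sN_m} \otimes_{K_m} V^{Z_m}$, giving
\begin{equation*}
M \;\cong\; (\OO(\sN_m) \otimes_{K_m} V^{Z_m})^{G_m} \;\cong\; K_m \otimes_{K_m} V^{Z_m} \;=\; V^{Z_m},
\end{equation*}
using $\OO(\sN_m)^{G_m} = K_m$ and that the tensor is over $K_m$, so that $V^{Z_m}$ is automatically $G_m$-invariant. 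Plugging this back into the description of $\bD_{\LT}$ gives the desired natural isomorphism $\bD_{\LT}(\Phi^0_{\LT,m}(V)) \cong \Phi^0_{\Dr,m}(V)$, and since $\Phi^0_{\LT,m}$ is an equivalence (by Proposition \ref{prop:levelmLTbundles} and Lemma \ref{lem:unionofcatsK}) and $\bD_{\LT}$ is Kohlhaase's equivalence, $\Phi^0_{\Dr,m}$ is too.

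The direct-limit statement then follows by assembling the compatibilities across all $m$: the source is $\bigcup_m \Rep^m_{K_\infty}(G_0 \times H_0) = \Rep^{\sm}_{K_\infty}(G_0 \times H_0)$ by Lemma \ref{lem:unionofcatsK}, the target is $\bigcup_m \Vect^{H_0}_{\Dr,m}(\sY) = \Vect^{H_0}_{\Dr}(\sY)$ by the definition of Drinfeld bundles, and the transition functors between levels are compatible by construction. The main obstacle is the first step: one must precisely pin down Kohlhaase's level-$m$ description of $\bD_{\LT}$ from its construction via the infinite-level tower isomorphism, and verify that this description is equivariant for the full $(G_0 \times H_0)$-action and respects the $K_m$-semi-linear structure. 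This is essentially bookkeeping rather than substance, but needs to be executed carefully so that the two functors match on the nose and not merely up to some twist.
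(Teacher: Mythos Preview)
Your proposal is correct and follows essentially the same route as the paper: both trace through Kohlhaase's level-$m$ description of $\bD_{\LT}$, identify the $G_m$-invariants of the pulled-back bundle with $V$ (resp.\ $V^{Z_m}$), and then read off the result as $\Phi^0_{\Dr,m}(V)$. The paper carries out your ``main obstacle'' explicitly by working on global sections (all spaces being quasi-Stein) and using Kohlhaase's notation $A_m, B_m, C_m$ together with \cite[Lem.\ 3.3]{KOH} to rewrite $\Gamma(\Omega, \bD_{\LT}(\sV)) = (C_m \otimes_{A_0} M)^{G_0}$ as $(B_m \hat{\otimes}_{K_m} (A_m \otimes_{A_0} M)^{G_m})^{G_0/G_m}$, which is exactly the sheaf-theoretic description you asserted.
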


\begin{proof}
For $m \geq 1$, suppose that $\sV \in \Vect^{G_0}_{\LT}(\Omega)$. As all spaces are quasi-Stein, it is sufficient to give the isomorphism on global sections. Writing $M = \Gamma(\Omega, \sV)$, then in the notation of \cite{KOH},
    \begin{align*}
        \Gamma(\Omega, \bD_{\LT}(\sV))
        &= (C_m \otimes_{A_0} M)^{G_0}, \\
        &\equiv (B_m \hat{\otimes}_{K_m} A_m \otimes_{A_0} M)^{G_0},\\
        &\equiv (B_m \hat{\otimes}_{K_m} (A_m \otimes_{A_0} M)^{G_m})^{G_0 / G_m}, \\
        &\equiv (B_m \hat{\otimes}_{K_m} (A_m \otimes_{K_m} V)^{G_m})^{G_0 / G_m}.
    \end{align*}
    Here for the third equality we use \cite[Lem.\ 3.3]{KOH} and the fact that $G_m$ acts trivially on $B_m$. In particular, if $\sV = \Phi^0_{\LT,m}(V)$, then $A_m \otimes_{A_0} M = A_m \otimes_{K_m} V$ and
    \begin{align*}
        B_m \hat{\otimes}_{K_m} (A_m \otimes_{A_0} M)^{G_m}
        &= B_m \hat{\otimes}_{K_m} (A_m \otimes_{K_m} V)^{G_m}, \\
        &= B_m \hat{\otimes}_{K_m} (K_m \otimes_{K_m} V), \\
        &\equiv B_m \otimes_{K_m} V,
    \end{align*}
    using that $G_m$ acts trivially on $V$, and thus there is a natural isomorphism
    \begin{align*}
        \Gamma(\Omega, \bD_{\LT}(\Phi^0_{\LT,m}(V))) &\equiv (B_m \otimes_{K_m} V)^{G_0 / G_m}, \\
        &= \Gamma(\Omega, \Phi^0_{\Dr,m}(V)). \qedhere
    \end{align*}
\end{proof}

\begin{remark}
    One can define analogous categories $\VectCon^{G_0}_{\LT}(\Omega)$ and $\VectCon^{H_0}_{\Dr}(\sY)$, and by defining appropriate $\sD$-module structures extend Kohlhaase's equivalences to equivalences
\[
\bD_{\LT}^{\nabla} \colon \VectCon^{G_0}_{\LT}(\Omega) \xleftrightarrow{\sim} \VectCon^{H_0}_{\Dr}(\sY) : \! \bD_{\Dr}^{\nabla},
\]
which are extensions in the sense that there are natural forgetful maps from these categories to those of Kohlhaase for which the diagram
% https://q.uiver.app/#q=WzAsNCxbMCwwLCJcXFZlY3RDb25ee0dfMH1fe1xcTFR9KFxcT21lZ2EpIl0sWzEsMCwiXFxWZWN0Q29uXntIXzB9X3tcXERyfShcXHNZKSJdLFswLDEsIlxcVmVjdF57R18wfV97XFxMVH0oXFxPbWVnYSkiXSxbMSwxLCJcXFZlY3Ree0hfMH1fe1xcRHJ9KFxcc1kpIl0sWzAsMl0sWzEsM10sWzAsMSwiXFxiRF97XFxMVH1ee1xcbmFibGF9IiwwLHsiY3VydmUiOi0xfV0sWzEsMCwiXFxiRF97XFxEcn1ee1xcbmFibGF9IiwwLHsiY3VydmUiOi0xfV0sWzMsMiwiXFxiRF97XFxEcn0iLDAseyJjdXJ2ZSI6LTF9XSxbMiwzLCJcXGJEX3tcXExUfSIsMCx7ImN1cnZlIjotMX1dLFsyLDMsIlxcc2ltIiwxLHsic3R5bGUiOnsiYm9keSI6eyJuYW1lIjoibm9uZSJ9LCJoZWFkIjp7Im5hbWUiOiJub25lIn19fV0sWzAsMSwiXFxzaW0iLDEseyJzdHlsZSI6eyJib2R5Ijp7Im5hbWUiOiJub25lIn0sImhlYWQiOnsibmFtZSI6Im5vbmUifX19XV0=
\[\begin{tikzcd}[column sep=6em]
	{\VectCon^{G_0}_{\LT}(\Omega)} & {\VectCon^{H_0}_{\Dr}(\sY)} \\
	{\Vect^{G_0}_{\LT}(\Omega)} & {\Vect^{H_0}_{\Dr}(\sY)}
	\arrow["{\bD_{\LT}^{\nabla}}", curve={height=-6pt}, from=1-1, to=1-2]
	\arrow["\sim"{description}, draw=none, from=1-1, to=1-2]
	\arrow[from=1-1, to=2-1]
	\arrow["{\bD_{\Dr}^{\nabla}}", curve={height=-6pt}, from=1-2, to=1-1]
	\arrow[from=1-2, to=2-2]
	\arrow["{\bD_{\LT}}", curve={height=-6pt}, from=2-1, to=2-2]
	\arrow["\sim"{description}, draw=none, from=2-1, to=2-2]
	\arrow["{\bD_{\Dr}}", curve={height=-6pt}, from=2-2, to=2-1]
\end{tikzcd}\]
commutes. With these equivalences, one can further check that the the natural isomorphisms of Proposition \ref{prop:compatiblewithequiv} respect the $\sD$-module structure, and hence that Proposition \ref{prop:compatiblewithequiv} continues to hold with $\Vect$ replaced by $\VectCon$ everywhere.
\end{remark}

\subsection{Applications}

From now on we focus on the Drinfeld side, but in light of Section \ref{sect:LTside0} all the following results can be translated to the Lubin-Tate side. In this section we apply the results of Section \ref{sect:DRside0} and Section \ref{sect:repcat} to understand the categories $\VectCon^{G_0}_{\LT}(\Omega)$ and $\Vect^{G_0}_{\LT}(\Omega)$. First, we have the following immediate consequence.

\begin{cor}\label{cor:LT0SS}
    The categories $\VectCon^{G_0}_{\LT}(\Omega)$ and $\Vect^{G_0}_{\LT}(\Omega)$ are semisimple.
\end{cor}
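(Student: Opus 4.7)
The plan is to reduce, via Theorem \ref{thm:mainthmLT0} and Lemma \ref{lem:unionofcatsK}, to showing that each $\Rep^{Z_m}_{K_m}(G_0 \times H_0)$ is semi-simple. Since the subcategories $\Rep^{m}_{K_{\infty}}(G_0 \times H_0)$ are closed under sub-quotients inside $\Rep^{\sm}_{K_{\infty}}(G_0 \times H_0)$ and cover it (Lemma \ref{lem:unionofcatsK}), any short exact sequence in the smooth category lives at some finite level $m$, where a splitting in the equivalent category $\Rep^{Z_m}_{K_m}(G_0 \times H_0)$ will suffice.

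The key observation is that $Z_m$ acts \emph{trivially} on $K_m$. Indeed, the action of $G_0 \times H_0$ on $K_m \cong K \otimes_{\breve{F}} \breve{F}_m$ factors through $(g,h) \mapsto \det(g)\Nrd(h)^{-1}$ modulo $1 + \pi^m\OO_F$ composed with the Lubin-Tate isomorphism, and a direct expansion gives $\det(G_m), \Nrd(H_m) \subset 1 + \pi^m\OO_F$. Consequently, for any $V \in \Rep^{Z_m}_{K_m}(G_0 \times H_0)$, the set $V^{Z_m}$ is automatically a $K_m$-submodule of $V$, so the defining condition $K_m \cdot V^{Z_m} = V$ forces $V^{Z_m} = V$; equivalently, $Z_m$ acts trivially on $V$. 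Hence $\Rep^{Z_m}_{K_m}(G_0 \times H_0)$ identifies with the category of finite-rank $K_m$-modules equipped with a semi-linear action of the \emph{finite} group $\Gamma_m := (G_0 \times H_0)/Z_m = G_0/G_m \times H_0/H_m$, or equivalently with the category of modules over the finite-dimensional $K$-algebra $K_m \rtimes \Gamma_m$ that are free over $K_m$.

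Semi-simplicity then follows from a standard Maschke-style averaging argument. Given a short exact sequence $0 \to W \to V \to V/W \to 0$ in $\Rep^{Z_m}_{K_m}(G_0 \times H_0)$, pick a $K_m$-linear retraction $p \colon V \to W$ (which exists since $K_m$ is a finite product of fields and so is a semi-simple ring), and form
\[
\tilde p = \frac{1}{|\Gamma_m|} \sum_{g \in \Gamma_m} g \circ p \circ g^{-1},
\]
which is well-defined because $|\Gamma_m|$ is invertible in $K$ by the characteristic-zero hypothesis. A direct calculation, using the $K_m$-linearity of $p$ and the semi-linearity of the $\Gamma_m$-action, shows that $\tilde p$ is $K_m$-linear and $\Gamma_m$-equivariant, and still restricts to the identity on $W$, so it provides the desired splitting. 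The only mildly non-obvious step in the whole argument is recognising that the hypothesis $K_m \cdot V^{Z_m} = V$ degenerates so pleasantly; after this, the rest is routine.
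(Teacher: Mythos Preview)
Your proof is correct and follows essentially the same strategy as the paper: reduce via Theorem \ref{thm:mainthmLT0} and Lemma \ref{lem:unionofcatsK} to semi-simplicity of each $\Rep^{Z_m}_{K_m}(G_0 \times H_0)$, then invoke a Maschke-type result. The paper simply cites \cite[Cor.\ 0.2]{MONT} for this last step, whereas you spell out the averaging argument by hand after observing that $Z_m$ acts trivially on $K_m$ (and hence on $V$), reducing to semi-linear representations of the finite quotient $\Gamma_m$; this makes your version more self-contained but is not a genuinely different route.
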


\begin{proof}
Each category $\Rep_{K_m}^{Z_m}(G_0 \times H_0)$ is semisimple, by \cite[Cor.\ 0.2]{MONT2}. Each inclusion
\[
K_{\infty} \otimes_{K_m} - \colon \Rep_{K_m}^{Z_m}(G_0 \times H_0) \xrightarrow{\sim} \Rep_{K_{\infty}}^m(G_0 \times H_0) \hookrightarrow  \Rep^{\sm}_{K_{\infty}}(G_0 \times H_0)
\]
is closed under sub-quotients by Lemma \ref{lem:unionofcatsK} and thus $\Rep^{\sm}_{K_{\infty}}(G_0 \times H_0)$ is semisimple.
\end{proof}

\begin{remark}
    Both $\VectCon^{G_0}_{\LT}(\Omega)$ and $\Vect^{G_0}_{\LT}(\Omega)$ are closed under sub-quotients in the ambient categories $\VectCon^{G_0}(\Omega)$ and $\Vect^{G_0}(\Omega)$ by Proposition \ref{prop:mainprop}. Therefore any semisimple decomposition is also semisimple in the larger category.
\end{remark}

In light of Theorem \ref{thm:mainthmLT0}, to understand $\VectCon^{G_0}_{\LT}(\Omega)$ and $\Vect^{G_0}_{\LT}(\Omega)$ we are reduced to understanding the category $\Rep^{\sm}_{K_{\infty}}(G_0 \times H_0)$. One source of objects is the category $\Rep^{\sm}_K(G_0 \times H_0)$.

\begin{defn}
    We denote by $K_{\infty} \otimes_K -$ the canonical base change functor
\[
K_{\infty} \otimes_K - \colon \Rep^{\sm}_K(G_0 \times H_0) \rightarrow \Rep^{\sm}_{K_{\infty}}(G_0 \times H_0)
\]
from the category of smooth (linear) representations of $G_0 \times H_0$ over $K$, and set
\[
\Psi^0_{G \times H}(-) = \Phi^0_{\LT}(K_{\infty} \otimes_K -) \colon \Rep_K^{\sm}(G_0 \times H_0) \rightarrow \VectCon^{G_0}_{\LT}(\Omega)
\]
for the composition of $K_{\infty} \otimes_K -$ with the equivalence $\Phi^0_{\LT}$.
\end{defn}

\begin{remark}
By abuse of notation we also write $\Psi^0_{G \times H}$ for the composition of $\Psi^0_{G \times H}$ with the forgetful map to $\Vect^{G_0}_{\LT}(\Omega)$. Additionally, if $V \in \Rep_{K}^{Z_m}(G_0 \times H_0)$, then as the diagram 
% https://q.uiver.app/#q=WzAsNCxbMCwwLCJcXFJlcF57Wl9tfV97S30oR18wIFxcdGltZXMgSF8wKSJdLFsxLDAsIlxcUmVwXntaX219X3tLX219KEdfMCBcXHRpbWVzIEhfMCkiXSxbMCwxLCJcXFJlcF57XFxzbX1fe0t9KEdfMCBcXHRpbWVzIEhfMCkiXSxbMSwxLCJcXFJlcF57XFxzbX1fe0tfXFxpbmZ0eX0oR18wIFxcdGltZXMgSF8wKSJdLFswLDEsIktfbSBcXG90aW1lc19LIC0iXSxbMCwyLCIiLDIseyJzdHlsZSI6eyJ0YWlsIjp7Im5hbWUiOiJob29rIiwic2lkZSI6InRvcCJ9fX1dLFsyLDMsIktfe1xcaW5mdHl9IFxcb3RpbWVzX0sgLSIsMl0sWzEsMywiS197XFxpbmZ0eX0gXFxvdGltZXNfe0tfbX0gLSJdXQ==
\[\begin{tikzcd}
	{\Rep^{Z_m}_{K}(G_0 \times H_0)} & {\Rep^{Z_m}_{K_m}(G_0 \times H_0)} \\
	{\Rep^{\sm}_{K}(G_0 \times H_0)} & {\Rep^{\sm}_{K_\infty}(G_0 \times H_0)}
	\arrow["{K_m \otimes_K -}", from=1-1, to=1-2]
	\arrow[hook, from=1-1, to=2-1]
	\arrow["{K_{\infty} \otimes_{K_m} -}", from=1-2, to=2-2]
	\arrow["{K_{\infty} \otimes_K -}"', from=2-1, to=2-2]
\end{tikzcd}\]
commutes, the associated Lubin-Tate bundle is explicitly described by
\begin{equation}\label{eqn:ltexplicit}
    \begin{aligned}
\Psi^0_{G \times H}(V) &= \Phi^0_{\LT,m}(K_m \otimes_K V), \\
&= (\OO_{\sN_m} \otimes_{K_m} (K_m \otimes_K V))^{H_0/H_m}, \\
&= (\OO_{\sN_m} \otimes_K V)^{H_0/H_m}.
    \end{aligned}
\end{equation}
\end{remark}

\begin{defn}
    We define
    \begin{align*}
    \Psi_{G}^0 &\colon \Rep_K^{\sm}(G_0) \rightarrow \VectCon_{\LT}^{G_0}(\Omega),\\
    \Psi_{H}^0 &\colon \Rep_K^{\sm}(H_0) \rightarrow \VectCon_{\LT}^{G_0}(\Omega),
    \end{align*}
    to each be the post-composition of $\Psi^0_{G \times H}$ with the respective inflation functor
    \begin{align*}
    \Rep_K^{\sm}(G_0) \rightarrow \Rep_K^{\sm}(G_0 \times H_0),\\
    \Rep_K^{\sm}(H_0) \rightarrow \Rep_K^{\sm}(G_0 \times H_0),
    \end{align*}
    induced by the corresponding projection from $G_0 \times H_0$ to $G_0$ or $H_0$.
\end{defn}

Similarly to above, when composed with the forgetful equivalence, these give functors to $\Vect_{\LT}^{G_0}(\Omega)$ which we also denote by $\Psi_G^0$ and $\Psi_H^0$. Kohlhaase also considered functors from $\Rep_K^{\sm}(G_0)$ and $\Rep_K^{\sm}(H_0)$ to $\Vect_{\LT}^{G_0}(\Omega)$ \cite[Thm.\ 3.8]{KOH}. The following shows that these coincide with $\Psi_{G}^0$ and $\Psi_{H}^0$.

\begin{lemma}\label{lem:explicitformofGH0}
    For $V \in \Rep_K^{\sm}(G_0)$ and $W \in \Rep_K^{\sm}(H_0)$ there are natural isomorphisms,
    \begin{align*}
        \Psi_{G}^0(V) &\cong \OO_{\Omega} \otimes_K V, \\
        \Psi_{H}^0(W) &\cong \bD_{\LT}(\OO_{\sY} \otimes_K W)
    \end{align*}
    in $\Vect_{\LT}^{G_0}(\Omega)$.
\end{lemma}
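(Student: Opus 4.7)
The plan is to compute both sides directly using the explicit formulas for $\Phi_{\LT,m}^0$ and $\Phi_{\Dr,m}^0$ at each finite level, combined with the compatibility established in Proposition \ref{prop:compatiblewithequiv}. The key observation driving both computations is that one of the two factors of $G_0 \times H_0$ acts trivially on the representation in each case, so the corresponding invariants commute with the tensor product.

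For the first isomorphism, I would fix $m \geq 1$ large enough that $G_m$ acts trivially on $V$, which places $V$ in $\Rep_K^{Z_m}(G_0 \times H_0)$ with trivial $H_0$-action. The explicit description preceding the lemma gives
\[
\Psi_G^0(V) = (\OO_{\sN_m} \otimes_K V)^{H_0/H_m}
\]
inside $\Vect^{G_0}(\Omega)$, via the descent equivalence $\Vect^{H_0/H_m \times G_0}(\sN_m) \xrightarrow{\sim} \Vect^{G_0}(\Omega)$. Because $H_0/H_m$ acts trivially on $V$, the diagonal action factors through the first tensor factor, and the invariants collapse to $\OO_{\sN_m}^{H_0/H_m} \otimes_K V$. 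Finite \'etale Galois descent along $\sN_m \to \sN \xrightarrow{\sim} \Omega$ identifies $\OO_{\sN_m}^{H_0/H_m}$ with $\OO_\Omega$, yielding $\Psi_G^0(V) \cong \OO_\Omega \otimes_K V$ as $G_0$-equivariant bundles.

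For the second isomorphism, I would translate $\Psi_H^0(W)$ to the Drinfeld side. Proposition \ref{prop:compatiblewithequiv} gives that Kohlhaase's equivalence intertwines $\Phi_{\LT,m}^0$ with $\Phi_{\Dr,m}^0$, so it suffices to compute the latter. Taking $m$ large enough that $H_m$ acts trivially on $W$, the analogous explicit formula reads
\[
\Phi_{\Dr,m}^0(K_m \otimes_K W) = (\OO_{\sY_m} \otimes_K W)^{G_0/G_m},
\]
and the same Galois-descent argument, applied now to the Lubin-Tate cover $\sY_m \to \sY$ whose Galois group $G_0/G_m$ acts trivially on $W$, produces $\OO_\sY \otimes_K W$ as an $H_0$-equivariant bundle on $\sY$. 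Transporting back through Kohlhaase's equivalence then yields the desired identification.

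No substantial obstacle arises; the argument is essentially bookkeeping, driven entirely by the explicit formula for $\Psi_{G \times H}^0$ recalled in the previous remark and the compatibility of Proposition \ref{prop:compatiblewithequiv}. Naturality in $V$ and $W$ is automatic from the naturality of the descent equivalences and of the functors $\Phi_{\LT,m}^0$ and $\Phi_{\Dr,m}^0$.
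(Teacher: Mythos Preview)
Your proposal is correct and follows essentially the same approach as the paper: both compute $\Psi_G^0(V)$ via the explicit formula $(\OO_{\sN_m}\otimes_K V)^{H_0/H_m}$ and use triviality of the $H_0$-action on $V$ to pull the invariants past the tensor product, and both handle $\Psi_H^0(W)$ by invoking Proposition~\ref{prop:compatiblewithequiv} to pass to the Drinfeld side and repeat the same invariants-past-tensor argument with the roles of $G_0$ and $H_0$ swapped. Your write-up of the second isomorphism is in fact more explicit than the paper's, which merely says ``follows similarly using Proposition~\ref{prop:compatiblewithequiv}''.
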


\begin{proof}
    From the explicit description (\ref{eqn:ltexplicit}) above, if $G_m$ acts trivially on $V$, then
    \begin{align*}
    \Psi_{G}^0(V)
    &= (\OO_{\sN_m} \otimes_K V)^{H_0/H_m}, \\
    &= \OO_{\sN_m}^{H_0/H_m} \otimes_K V, \\
    &= \OO_{\Omega} \otimes_K V
    \end{align*}
    because $H_0$ acts trivially on $V$. The claim for $\Psi_{H}^0(W)$ follows similarly using Proposition \ref{prop:compatiblewithequiv}.
\end{proof}

\begin{lemma}
    The functors $\Psi_{G}^0$ and $\Psi_{H}^0$ are fully faithful and preserve irreducibility.
\end{lemma}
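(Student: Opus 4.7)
The plan is to use that $\Phi_{\LT}^0$ is already an equivalence (Theorem~\ref{thm:mainthmLT0}) to reduce both statements to properties of the base-change functor $K_\infty \otimes_K - : \Rep_K^{\sm}(G_0 \times H_0) \to \Rep_{K_\infty}^{\sm}(G_0 \times H_0)$ restricted along the inflation $\Rep_K^{\sm}(G_0) \hookrightarrow \Rep_K^{\sm}(G_0 \times H_0)$ (and analogously for $H_0$). Since inflation is visibly fully faithful and preserves irreducibility (as any sub of a representation with trivial $H_0$-action automatically has trivial $H_0$-action), all the real work lies in the $K_\infty \otimes_K -$ step. I would treat the $G_0$ case in detail, since the $H_0$ case is entirely symmetric.

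The key input is the observation that $K_\infty^{H_0} = K$ (and symmetrically $K_\infty^{G_0} = K$). This holds because $H_0$ acts on $K_\infty$ through the surjective reduced norm $\Nrd : H_0 \twoheadrightarrow \OO_F^\times$ composed with the Lubin-Tate identification, so its image in $\Gal(\breve{F}_\infty / \breve{F})$ is the full Galois group and $K_\infty^{H_0} = K \otimes_{\breve F} \breve{F}_\infty^{H_0} = K$. For full faithfulness, a $K_\infty$-linear $G_0 \times H_0$-equivariant morphism $f : K_\infty \otimes_K V \to K_\infty \otimes_K W$ is determined by the values $f(1 \otimes v)$; since $H_0$ acts trivially on $V$ and $W$, $H_0$-equivariance forces these values into $(K_\infty \otimes_K W)^{H_0} = K_\infty^{H_0} \otimes_K W = W$, and $G_0$-equivariance identifies $v \mapsto f(1 \otimes v)$ as a morphism in $\Rep_K^{\sm}(G_0)$, providing the desired inverse bijection on morphism sets.

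For irreducibility, let $V \in \Rep_K^{\sm}(G_0)$ be irreducible and $N \subseteq K_\infty \otimes_K V$ a nonzero sub-object. Choose $m$ sufficiently large that $G_m$ acts trivially on $V$ and $N$ lies in $\Rep_{K_\infty}^m$; Lemma~\ref{lem:unionofcatsK} then identifies $N$ with $N^{Z_m} \subseteq K_m \otimes_K V$ in $\Rep_{K_m}^{Z_m}(G_0 \times H_0)$, hence as a sub-object in the ambient $\Rep_{K_m}(G_0 \times H_0)$. I would then apply Corollary~\ref{cor:repsfunctors} with $F = K_m$, $G = G_0 \times H_0$, $H = H_0$ (so $L = K_m^{H_0} = K$ by the same surjectivity of $\Nrd$): the essential image of $K_m \otimes_K -$ is closed under sub-quotients and contains $K_m \otimes_K V$, so $N^{Z_m} = K_m \otimes_K (N^{Z_m})^{H_0}$, where $(N^{Z_m})^{H_0} \subseteq V$ is a smooth $G_0$-subrepresentation. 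Irreducibility of $V$ then forces $(N^{Z_m})^{H_0} \in \{0, V\}$, hence $N \in \{0,\, K_\infty \otimes_K V\}$, as required.

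The only step that needs a moment's thought is the irreducibility argument: the most natural attempt passing through $Z_m$-invariants fails because $Z_m$ acts trivially on $K_m \otimes_K V$, rendering the defining $Z_m$-invariance condition of $\Rep_{K_m}^{Z_m}$ vacuous on sub-objects. The fix is to pass instead through $H_0$-invariants (respectively $G_0$-invariants in the $\Psi_H^0$ case) and to invoke the sub-quotient-closure part of Corollary~\ref{cor:repsfunctors}, which is exactly what forces a sub-object of $K_m \otimes_K V$ to descend to a $G_0$-subrepresentation of $V$.
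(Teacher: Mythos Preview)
Your proof is correct and follows essentially the same approach as the paper: both rest on Corollary~\ref{cor:repsfunctors} together with the key identity $K_\infty^{H_0} = K = K_\infty^{G_0}$. The paper's proof is a one-line citation of Corollary~\ref{cor:repsfunctors} applied directly to the triple $(K_\infty, G_0 \times H_0, H_0)$ (respectively $(K_\infty, G_0 \times H_0, G_0)$), whereas you unpack the fullness argument by hand and, for irreducibility, descend to a fixed level $m$ before invoking Corollary~\ref{cor:repsfunctors} with $F = K_m$; this last variation is arguably more careful, since it sidesteps having to verify that $K_\infty$ itself literally satisfies the ``product of fields'' hypothesis of Section~\ref{sect:repcat}.
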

\begin{proof}
    The follows directly from Corollary \ref{cor:repsfunctors} with triples $(K_{\infty}, G_0 \times H_0,H_0)$ and $(K_{\infty}, G_0 \times H_0,G_0)$ respectively, as $K_{\infty}^{G_0} = K = K_{\infty}^{H_0}$.
\end{proof}

Using $\Psi_{G}^0$ and $\Psi_{H}^0$, we can describe the category of $G_0$-equivariant Lubin-Tate bundles more concretely when $n = 1$. 

\begin{lemma}\label{lem:GH0areequiv}
    When $n=1$, each functor of the diagram
    \[
    \Rep_K^{\sm}(G_0) \xrightarrow{K_{\infty} \otimes_K -} \Rep_{K_{\infty}}^{\sm}(G_0 \times H_0) \xrightarrow{\Phi_{\LT}^0} \VectCon^{G_0}_{\LT}(\Omega) \xrightarrow{\sim} \Vect^{G_0}_{\LT}(\Omega),
    \]
    is an equivalence, and the same holds for $\Rep_K^{\sm}(G_0)$ replaced with $\Rep_K^{\sm}(H_0)$.
    
    In particular, when $n = 1$ both $\Psi_G^0$ and $\Psi_H^0$ are equivalences of categories.
\end{lemma}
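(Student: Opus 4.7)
The plan is to show each of the three functors in the displayed composition is individually an equivalence. The third arrow $\VectCon^{G_0}_{\LT}(\Omega) \to \Vect^{G_0}_{\LT}(\Omega)$ and the second arrow $\Phi^0_{\LT}$ are already equivalences in general by Theorem \ref{thm:mainthmLT0}, so the whole problem reduces to proving that
\[
K_\infty \otimes_K - \colon \Rep_K^{\sm}(G_0) \to \Rep_{K_\infty}^{\sm}(G_0 \times H_0)
\]
is an equivalence when $n = 1$.

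When $n = 1$ we have $G_0 = H_0 = \OO_F^\times$ and both $\det$ and $\Nrd$ are the identity, so the semi-linear action of $G_0 \times H_0$ on $K_\infty$ sends $(g,h)$ to $gh^{-1}$ composed with the Lubin-Tate isomorphism $\OO_F^\times \cong \Gal(\breve F_\infty/\breve F)$. In particular, the restriction of this action to either factor surjects onto the Galois group, so $K_\infty^{H_0} = K$ (and symmetrically $K_\infty^{G_0} = K$) by base change from $\breve F_\infty^{\mathrm{Gal}} = \breve F$. Taking $(-)^{H_0}$ as the candidate quasi-inverse, I first plan to verify that the unit $V \to (K_\infty \otimes_K V)^{H_0}$ is an isomorphism; this is immediate from $K_\infty^{H_0} = K$ and the triviality of the $H_0$-action on $V$, and gives fully faithfulness.

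For essential surjectivity, I would use Lemma \ref{lem:unionofcatsK} to pass to a finite level: any $W \in \Rep_{K_\infty}^{\sm}(G_0 \times H_0)$ satisfies $W \cong K_\infty \otimes_{K_m} M$ for some $m \geq 1$, where $M = W^{Z_m}$ is a finite free $K_m$-module carrying a semi-linear $(G_0/G_m) \times (H_0/H_m)$-action. The decisive observation is that $H_0/H_m$ acts on $K_m$ as the full Galois group $\Gal(K_m/K)$, so finite Galois descent (Hilbert 90) applied to the étale $K$-algebra $K_m$ yields $K_m \otimes_K M^{H_0/H_m} \xrightarrow{\sim} M$, with $V := M^{H_0/H_m}$ a finite-dimensional $K$-vector space inheriting a $G_0/G_m$-action. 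Tensoring up to $K_\infty$ produces the required isomorphism $K_\infty \otimes_K V \cong W$. The parallel argument with $G_0$ and $H_0$ interchanged handles the other statement, and both $\Psi_G^0$ and $\Psi_H^0$ being equivalences then follow by unwinding definitions.

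The main obstacle is less technical than conceptual. On the technical side, one must invoke Galois descent for the étale (not necessarily field) $K$-algebra $K_m$, applied componentwise, which is classical. More substantively, the argument crucially exploits the feature unique to $n = 1$ that both $\det$ and $\Nrd$ are isomorphisms, ensuring that each of $G_0$ and $H_0$ acts on $K_\infty$ through the entire Galois group. For $n \geq 2$ this fails: the kernel of $\Nrd \colon H_0 \to \OO_F^\times$ acts $K_\infty$-linearly on objects of $\Rep_{K_\infty}^{\sm}(G_0 \times H_0)$ in ways not visible from $\Rep_K^{\sm}(G_0)$, and essential surjectivity genuinely breaks down.
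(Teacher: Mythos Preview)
Your proposal is correct and follows essentially the same approach as the paper: reduce to the first arrow, pass to a finite level $m$, and invoke Galois descent using that when $n=1$ the group $H_0/H_m$ acts on $K_m$ as the full Galois group $\Gal(K_m/K)$. The paper organises this slightly differently---it writes $K_\infty \otimes_K -$ directly as the direct limit of the finite-level functors $K_m \otimes_K - \colon \Rep_K(G_0/G_m) \to \Rep_{K_m}(G_0/G_m \times H_0/H_m)$ and shows each of these is an equivalence by Galois descent (citing \cite[Prop.\ 2.52]{TAY3})---whereas you first establish full faithfulness at the $K_\infty$ level and then reduce essential surjectivity to finite level via Lemma \ref{lem:unionofcatsK}; but the mathematical content is identical.
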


\begin{proof}
    The functor $K_{\infty} \otimes_K -$ is the direct limit of the functors
    \[
    K_m \otimes_K - \colon \Rep_K(G_0 / G_m) \rightarrow \Rep_{K_m}(G_0 / G_m \times H_0/H_m),
    \]
    which are equivalences with inverse $(-)^{H_0 / H_m}$ when $n = 1$ by Galois descent \cite[Prop.\ 2.53]{TAY3}, as $H_0 / H_m$ is the Galois group of $K_m / K$. The same holds similarly for $G_0$ and $H_0$ swapped.
\end{proof}

However for $n \geq 2$, the essential images of $\Psi_G^0$ and $\Psi_H^0$ will not coincide in general, as was noted by Kohlhaase \cite[Rem.\ 3.9]{KOH}. From the work we have done above, we can now give a precise description of the intersection of the essential images of $\Psi_G^0$ and $\Psi_H^0$.

\begin{thm}\label{thm:intersectEI}
The intersection of the essential images of $\Psi_G^0$ and $\Psi_H^0$ is the full subcategory consisting of objects 
\[
\Psi_G^0(V) = \OO_{\Omega} \otimes V
\]
for $V \in \Rep_K^{\sm}(\OO_F^\times)$, viewed as an object of $\Rep_K^{\sm}(G_0)$ via inflation along $\det \colon G_0 \rightarrow \OO_F^\times$.
\end{thm}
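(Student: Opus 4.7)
The plan is to transport the question through the equivalence $\Phi^0_{\LT}$ of Theorem \ref{thm:mainthmLT0} and work in $\Rep^{\sm}_{K_{\infty}}(G_0 \times H_0)$. Under this equivalence, the essential images of $\Psi_G^0$ and $\Psi_H^0$ correspond to the essential images of the base-change functors $K_{\infty} \otimes_K -$ applied to smooth $K$-representations of $G_0$ (respectively $H_0$), with trivial action of the other group. Since $\det \colon G_0 \twoheadrightarrow \OO_F^\times$ and $\Nrd \colon H_0 \twoheadrightarrow \OO_F^\times$ are both surjective and the Lubin--Tate isomorphism identifies $\OO_F^\times \cong \Gal(\breve{F}_{\infty}/\breve{F})$, we have $K_{\infty}^{G_0} = K_{\infty}^{H_0} = K$. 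Applying Corollary \ref{cor:repsfunctors} at each finite level and passing to the direct limit (as in Lemma \ref{lem:unionofcatsK}), these essential images are characterised intrinsically by $K_{\infty} \cdot W^{H_0} = W$ and $K_{\infty} \cdot W^{G_0} = W$ respectively, with quasi-inverses $(-)^{H_0}$ and $(-)^{G_0}$. So the task reduces to describing the intersection of these two conditions.

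For the forward containment, suppose $W$ lies in both essential images. Since the second condition holds, $W \cong K_{\infty} \otimes_K W^{G_0}$ with $G_0$ acting trivially on the $K$-form $W^{G_0}$, so the entire $G_0$-action on $W$ is through its action on $K_{\infty}$, which factors through $\det$. In particular $\SL_n(\OO_F) = \ker(\det|_{G_0})$ acts trivially on $W$, hence on the subrepresentation $V := W^{H_0}$. Therefore $V \in \Rep^{\sm}_K(G_0)$ is inflated from a smooth $K$-representation of $\OO_F^\times$ along $\det$, as required.

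For the reverse containment, given $V \in \Rep^{\sm}_K(\OO_F^\times)$ inflated to $\tilde V \in \Rep^{\sm}_K(G_0)$, I would set $W = K_{\infty} \otimes_K \tilde V$ (which represents $\Psi_G^0(\tilde V)$ under $\Phi^0_{\LT}$) and verify $K_{\infty} \cdot W^{G_0} = W$. Because both components of the diagonal $G_0$-action factor through $\det$, $W$ is a semi-linear representation of $\OO_F^\times \cong \Gal(\breve{F}_{\infty}/\breve{F})$. Choosing $m$ with $V$ of level $m$, one computes $W^{1+\pi^m\OO_F} = K_m \otimes_K V$, and hence $W^{G_0}$ equals the diagonal $\Gal(K_m/K)$-fixed locus of $K_m \otimes_K V$. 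The main and essentially only non-formal step is to identify this fixed locus as a $K$-form of $K_m \otimes_K V$ via Galois descent: the triviality of $H^1(\Gal(K_m/K), \GL_n(K_m))$ (Hilbert 90 for $\GL_n$) shows that the diagonal semi-linear representation on $K_m \otimes_K V$ is isomorphic to the standard one in which $\Gal(K_m/K)$ acts only on the $K_m$-factor, so its invariants have $K$-dimension $\dim_K V$ and generate $K_m \otimes_K V$ over $K_m$. This yields $K_{\infty} \cdot W^{G_0} = W$, completing the argument.
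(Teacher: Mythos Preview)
Your argument is correct and takes a genuinely different route from the paper's.

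The paper fixes $V \in \Rep_K^{\sm}(G_0)$ of level $m$, passes to a splitting field $L$ of $A \coloneqq \OO_F^\times/(1+\pi^m\OO_F)$, and uses the normal basis theorem $L_m \cong L[A]$ to convert the essential-image condition into a dimension equality. After decomposing $V_L$ into irreducibles and checking the equality holds precisely for characters inflated from $A$, the paper descends back from $L$ to $K$ by comparing the natural map $K_m \otimes_K (K_m \otimes_K V)^{G_0} \to K_m \otimes_K V$ before and after base change to $L$.

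Your approach sidesteps both the splitting field and the normal basis theorem. For the forward direction, your observation that membership in the essential image of $\Psi_H^0$ forces $G_0$ to act on $W$ only through $K_\infty$, hence through $\det$, gives immediately that $\SL_n(\OO_F)$ kills $W$ and therefore kills $V = W^{H_0}$. For the reverse direction, you recognise the diagonal $G_0$-action on $K_m \otimes_K V$ as a semi-linear $A$-representation and invoke Galois descent (Hilbert 90 for $\GL_n$ over the Galois $K$-algebra $K_m$) to produce a $K$-form of full rank. This is more conceptual: it explains \emph{why} the condition singles out representations factoring through $\det$, rather than verifying it by a character-by-character dimension count. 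The paper's approach, on the other hand, makes the structure of the intersection completely explicit over the splitting field (one sees each irreducible constituent individually), which is useful if one wants finer information such as the explicit $H_0$-representation $W^{G_0}$ corresponding to a given $V$; your Hilbert 90 argument produces this $K$-form abstractly. One small remark: since $K_m$ need not be a field, your appeal to Hilbert 90 is really faithfully flat (Galois) descent for the \'etale $K$-algebra $K_m$, which is available in the paper's framework via \cite[Prop.\ 2.52]{TAY3} as used in Lemma \ref{lem:GH0areequiv}.
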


\begin{remark}
    When $(n,p) \neq (2,2)$ the determinant $\det \colon G_0 \rightarrow \OO_F^{\times}$ is an abelianization of $G_0$ \cite{LIT}, and therefore when $K$ is a splitting field for $\Rep_K^{\sm}(\OO_F^\times)$ (e.g.\ if $K$ is algebraically closed), then the intersection of Proposition \ref{thm:intersectEI} is simply the full subcategory of direct sums of $\OO_{\Omega} \otimes \chi$, for $\chi$ a smooth character of $G_0$ over $K$.
\end{remark}

\begin{proof}
    Fix an object $V \in \Rep_K^{\sm}(G_0)$, and let $m \geq 1$ be such that $V^{G_m} = V$. We take $L$ to be a finite extension of $K$ which is a splitting field for the group $A \coloneqq \OO_F^{\times} / (1+\pi^m \OO_F)$, and set $L_m \coloneqq L \otimes_K K_m$ (which has an action of $G_0 \times H_0$ which is trivial on the first component). We first note that by Theorem \ref{thm:mainthmLT0} and Lemma \ref{lem:unionofcatsK}, the following statements are equivalent:
    \begin{itemize}
        \item $\Psi_G^0(V)$ is in the essential image of $\Psi_H^0 \colon \Rep_K^{\sm}(H_0) \rightarrow \Vect^{G_0}_{\LT}(\Omega)$,
        \item $K_{\infty} \otimes_K V$ is in the essential image of $K_{\infty} \otimes_K - \colon \Rep_K^{\sm}(H_0) \rightarrow \Rep_{K_{\infty}}^{\sm}(G_0 \times H_0)$,
        \item $K_m \otimes_K V$ is in the essential image of $K_{m} \otimes_K - \colon \Rep_K^{H_m}(H_0) \rightarrow \Rep_{K_m}^{Z_m}(G_0 \times H_0)$.
    \end{itemize}
    We are therefore reduced to showing that this final condition is equivalent to $V$ being inflated from $\Rep_K^{\sm}(\OO_F^\times)$. We first consider $V_L$. Taking the triple $(F,G,H)$ in Corollary \ref{cor:repsinjective} and Corollary \ref{cor:repsfunctors} to be $(L_m, G_0 \times H_0, G_0)$, this essential image of the functor
    \[
    L_m \otimes_L - \colon \Rep_L^{H_m}(H_0) \rightarrow \Rep^{Z_m}_{L_{m}}(G_0 \times H_0),
    \]
    consists of exactly those $U$ for which the inequality
    \[
    \dim_L U^{G_0/G_m} \leq \dim_{L_m}(U)
    \]
    is an equality. Taking $U = L_m \otimes_L V_L$ this inequality becomes
    \begin{equation}\label{eqn:essiminequality}
    \dim_L(L_m \otimes_L V_L)^{G_0/ G_m} \leq \dim_K V_L.
    \end{equation}
    By the normal basis theorem \cite[Thm.\ 4.2(c)]{CHR} we have that $L_m \cong L[A]$ as $L[A]$-modules because $L_m/L$ is Galois with Galois group $A$. Writing $V_L$ as the direct sum of irreducible representations $V_i$, the inequality (\ref{eqn:essiminequality}) is an equality if and only it is for each $V_i$, and, by our assumption that $L$ is a splitting field of $A$, (\ref{eqn:essiminequality}) is an equality for a fixed $V_i$ if and only if some twist of $V_i$ by an inflated character of $A$ is trivial, or equivalently that $V_i$ is an inflated character of $A$.

    In summary, $L_m \otimes_L V_L$ is in the essential image of the functor $L_m \otimes_L -$ above if and only if $V_L$ is inflated from a smooth representation of $\OO_F^\times$. 
    In particular, if $K_m \otimes_K V \cong K_m \otimes_K W$ for some $W \in \Rep_K^{H_m}(H_0)$, then $V_L \cong W_L$ and $V_L$ is in the essential image of $L_m \otimes_L -$, so $V_L$ and hence $V$ is inflated from a smooth representation of $\OO_F^\times$. Conversely, suppose that $V$ is inflated from a smooth representation of $\OO_F^\times$. Then $V_L$ is too and hence $L_m \otimes_L V_L$ is in the essential image of $L_m \otimes_L -$. We claim that $K_m \otimes V \cong K_m \otimes W$ for $W \coloneqq (K_m \otimes_K V)^{G_0 / G_m}$. Indeed, the natural map
    \[
    K_m \otimes_K W \rightarrow V
    \]
    is an isomorphism, because after applying $L \otimes_K -$ it becomes the map
    \[
        L_m \otimes_L (L_m \otimes_L V_L)^{G_0 / G_m} \rightarrow L_m \otimes_L V_L,
    \]
    which is an isomorphism by Corollary \ref{cor:repsinjective} and Corollary \ref{cor:repsfunctors}.
\end{proof}

\begin{remark}\label{rem:commchar0}
In fact, using the results of \cite{TAYSchur} it is possible to show that $\Psi^0_G(\chi) \cong \Psi_H^0(\chi)$ for any smooth character $\chi$ of $\OO_F^\times$ viewed as a character of $G_0$ and $H_0$ by inflation along $\det$ and $\Nrd$. Later, we will show that the analogous statement for the groups $G$ and $H$ holds (cf.\ Theorem \ref{thm:twists}). For any such $\chi$ as above, $\chi \otimes 1$ and $1 \otimes \chi$ are non-isomorphic objects of $\Rep^{\sm}_K(G_0 \times H_0)$ which become isomorphic in $\Rep^{\sm}_{K_{\infty}}(G_0 \times H_0)$. In particular, unlike both $\Psi^0_{G}$ and $\Psi^0_{H}$, the functor $\Psi^0_{G \times H}$ is not fully faithful.

Again, using the results of \cite{TAYSchur} it is possible to show that $\Psi^0_{G \times H}$ is not essentially surjective in general. However, one can at least show that each object of $\Vect^{G_0}_{\LT}(\Omega)$ is a sub-object of an object in its essential image. On the other hand, we will show that the analogous functor $\Psi_{G \times H}$ for the groups $G$ and $H$ \emph{is} in fact essentially surjective (cf.\ Theorem \ref{thm:finalmainthmLT}).
\end{remark}

\section{$G$-Equivariant Lubin-Tate Bundles on $\Omega$}\label{sect:DRside}

In this section, we show that the category of $G$-equivariant Lubin-Tate bundles on $\Omega$ defined by Kohlhaase is equivalent to the category of finite-dimensional smooth representations of $H$ over $K$.

\begin{defn}
    An object $\sV$ of $\Vect^{G}(\Omega)$ or $\VectCon^G(\Omega)$ is \emph{Lubin-Tate} if the restriction
    \[
    \sV |_{G_0} \in \Vect^{G_0}_{\LT}(\Omega) \subset \Vect^{G_0}(\Omega),
    \]
    and further that $\sV$ is \emph{Lubin-Tate of level $m$} if the same is true of $\sV|_{G_0}$. We let
\begin{align*}
    \VectCon^{G}_{\LT}(\Omega) &= \bigcup_{m \geq 1} \VectCon^{G}_{\LT,m}(\Omega), \\
    \Vect^{G}_{\LT}(\Omega) &= \bigcup_{m \geq 1} \Vect^{G}_{\LT,m}(\Omega),
\end{align*}
denote the corresponding full subcategories of $G$-equivariant Lubin-Tate bundles on $\Omega$.
\end{defn}

\begin{remark}
    The category $\Vect^G_{\LT}(\Omega)$ agrees with that defined by Kohlhaase \cite[Def.\ 4.2]{KOH}.
\end{remark}

% Suppose that $\sV \in \Vect^{G}(\Omega)$, $m \geq 1$, and set $\sW \coloneqq f_m^* \sV \in \Vect^{G \times H / H_m}(\sM_m)$. Then by definition, $\sV \in \Vect^{G}_{\LT,m}(\Omega)$ if and only if $\sW|_{\sN_m} \in \Vect^{G_0 \times H_0 / H_m}(\sN_m)$ lies in the full subcategory $\Vect^{G_0 \times H_0 / H_m}_{\LT}(\sN_m)$, of objects for which the natural map
% \[
% \OO_{\sN_m} \otimes_{K_m} \sW(\sN_m)^{G_m} \rightarrow \sW
% \]
% is an isomorphism. We write $\Vect^{G \times H/H_m}_{\LT}(\sM_m)$ for the category of such $\sW$. 

To construct objects of $\Vect^G_{\LT}(\Omega)$ and $\VectCon^G_{\LT}(\Omega)$ one can use the functors of Section \ref{sect:generalities}.

\begin{defn}
    For $m \geq 1$, we set $L_m \coloneqq c(\sM_m)$.
\end{defn}

The space $\sM_m$ is the disjoint union of spaces
\[
\sM_m = \bigsqcup_{i \in \bZ} \sM_m^i,
\]
where $\sM_m^0 = \sN_m$, and the action of the uniformiser $\Pi$ of $D$ in $H$ induces $G^0$-equivariant isomorphisms $\Pi^i \colon \sM^i_m \xrightarrow{\sim} \sM^0_m = \sN_m$ \cite[\S 7.1]{TAY3}. From this,
\[
c(\sM_m^i) = \OO(\sM_m^i)^{G_m}
\]
because $c(\sN_m) = \OO(\sN_m)^{G_m}$ (see Section \ref{sect:DRside0}), and therefore
\[
L_m = c(\sM_m) = \prod_{i \in \bZ} c(\sM_m^i) = \prod_{i \in \bZ} \OO(\sM_m^i)^{G_m} = \OO(\sM_m)^{G_m}.
\]
In particular, the triple $(X,G,H) = (\sM_m, G \times H/H_m, G_m)$ fits into the formalism of Section \ref{sect:generalities} with $L_m$ equal to the $K$-algebra $L$ of Section \ref{sect:generalities} in either case (A) or (B).

For example, using this approach one obtains functors
\[
(\OO_{\sM_m} \otimes_{L_m} -)^H \colon \Rep_{L_m}^{H_m \times G_m}(G \times H) \rightarrow \Vect^{G}_{\LT, m}(\Omega).
\]

However, unlike with the $G^0$-equivariant Lubin-Tate bundles considered above, the subgroup $G_m \leq G$ is no longer normal, and so the functor (cf.\ Remark \ref{rmk:simplerform})
\[
(\OO(\sM_m) \otimes_{\OO(\Omega)} (-)(\Omega))^{G_m} \colon \Vect^{G}(\Omega) \rightarrow \Mod_{L_m}
\]
doesn't obviously factor through the forgetful map $\Rep_{L_m}^{H_m \times G_m}(G \times H) \rightarrow \Mod_{L_m}$ and give us a candidate inverse functor.

The key result of this section is an alternative definition of a $G$-equivariant Lubin-Tate bundle, which fixes this issue, and gives a semilinear representation theoretic description of this category.

\begin{defn}
    For $m \geq 1$, set $G^m = \det^{-1}(1 + \pi^m \OO_F)$.
\end{defn}

We have that $G^m \leq G^0$ and $G^m$ acts on $K_m$ through the determinant action considered at the start of Section \ref{sect:LTDRbundles}, and hence
\[
    \OO(\sN_m)^{G_m} = K_m = \OO(\sN_m)^{G^m}.
\]
The action of the uniformiser of $H$ induces $G^0$-equivariant isomorphisms $K_m \xrightarrow{\sim} c(\sM_m^i) $, and thus
\[
 \OO(\sM_m)^{G_m} = \prod_{i \in \bZ} \OO(\sM_m^i)^{G_m} = L_m = \prod_{i \in \bZ} \OO(\sM_m^i)^{G^m} =  \OO(\sM_m)^{G^m}.
\]
We can interpret this as a statement about Lubin-Tate bundles, namely that for any $m \geq 1$ the trivial Lubin-Tate bundle $\sV = \OO \in \Vect^G(\Omega)$ satisfies
\[
    (f_m^*\sV)(\sM_m)^{G_m} = \OO(\sM_m)^{G_m} = L_m = \OO(\sM_m)^{G^m} = (f_m^*\sV)(\sM_m)^{G^m}.
\]
The following asserts that this is actually true of \emph{any} Lubin-Tate bundle of level $m$.
% From now on we work with $\Vect$ for simplicitly of exposition, but in everything that follows in this section $\Vect$ can be replaced by $\VectCon$.

\begin{lemma}\label{lem:equivdefs}
    Suppose that $\sV \in \Vect^{G}(\Omega)$, $m \geq 1$, and set 
    \[
        \sW \coloneqq f_m^* \sV \in \Vect^{G \times H / H_m}(\sM_m).
    \]    
    Then the following are equivalent: 
    \begin{enumerate}
        \item $\sV \in \Vect^{G}_{\LT,m}(\Omega)$,
        \item $\sV \in \Vect^{G}_{\LT,m}(\Omega)$ and $\sW(\sN_m)^{G_m} = \sW(\sN_m)^{G^m}$,
        \item The natural map
        \[
        \OO_{\sM_m} \otimes_{L_m} \sW(\sM_m)^{G^m} \rightarrow \sW
        \]
        is an isomorphism.
    \end{enumerate}
    The same holds with $\Vect$ replaced with $\VectCon$ throughout.
\end{lemma}

\begin{proof}
    Note that $\sV$ is Lubin-Tate if and only if the natural map
    \[
    \OO_{\sN_m} \otimes_{K_m} \sW(\sN_m)^{G_m} \rightarrow \sW|_{\sN_m}
    \]
    is an isomorphism, or equivalently if and only if the natural map
    \[
    \OO_{\sM_m} \otimes_{L_m} \sW(\sM_m)^{G_m} \rightarrow \sW
    \]
    is an isomorphism, as the action of a uniformiser of $H$ provides $G^0$-equivariant isomorphisms
    \[
        \sW(\sN_m) = \sW(\sM_m^0) \xrightarrow{\sim} \sW(\sM_m^i) 
    \]
    for all $i \in \bZ$, which are compatible with the actions of $\OO(\sN_m)$ and $\OO(\sM_m^i)$. In particular, it is immediate that $(3)$ implies $(1)$, as $\sW(\sN_m)^{G^m} \subset \sW(\sN_m)^{G_m}$, and that $(2)$ implies $(3)$.

    In order see that $(1)$ implies $(2)$, suppose that $\sV$ is Lubin-Tate of level $m$. We want to show that $\sW(\sN_m)^{G_m} = \sW(\sN_m)^{G^m}$. Note that because $G^m = \SL_n(F) G_m$, we are reduced to showing that if $z \in \sW(\sN_m)^{G_m}$, then $z$ is fixed by all $g \in \SL_n(F)$. For any $t \geq m$, we have a $K_t \rtimes G^0$-module 
    \[
    V_t \coloneqq (f_t^*\sV)(\sN_t),
    \]
    and, because these actions are compatible, a $K_{\infty} \rtimes G^0$-module
    \[
        V_{\infty} \coloneqq \varinjlim_{t \geq m} V_t.
    \]
    A priori, it is not clear that $\SL_n(F)$ even preserves $V_m^{G_m}$. However, the inclusion $V_m \hookrightarrow V_\infty$ is $G^0$-equivariant, and hence
    \[
    V_m^{G_m} \hookrightarrow V_{\infty}^{G_m} \hookrightarrow V_{\infty}^{G^0\text{-}\sm}.
    \]
    Therefore, if we can show that $\SL_n(F)$ acts trivially on $V_{\infty}^{G^0\text{-}\sm}$, it must also act trivially on $V_m^{G_m}$.

    From the definition $\sV$ being Lubin-Tate of level $m$, we have that
    \[
        \OO(\sN_m) \otimes_{K_m} V_m^{G_m} \xrightarrow{\sim} V_m
    \]
    and so applying $\OO(\sN_t) \otimes_{\OO(\sN_m)} - $,
    \[
         \OO(\sN_t) \otimes_{K_m} V_m^{G_m} \xrightarrow{\sim} \OO(\sN_t) \otimes_{\OO(\sN_m)} V_m.
    \]
    The right-hand side is nothing but $V_t$, and when we take the $G_t$-invariants this becomes
    \[
        K_t \otimes_{K_m} V_m^{G_m}  \xrightarrow{\sim} V_t^{G_t}
    \]  
    because $\OO(\sN_t)^{G_t} = K_t$ and the action of $G_t$ on $V_m^{G_m}$ is trivial. 
    In particular,
     \begin{alignat*}{2}
        V_{\infty}^{G^0\text{-}\sm} &= \makebox[10em][c]{$\bigcup_{r \geq m} V_{\infty}^{G_r}$} & &= \makebox[10em][c]{$\bigcup_{r \geq m} \left( \varinjlim_{t \geq m} V_t  \right)^{G_r}$} \\
        &= \makebox[10em][c]{$\varinjlim_{r,t \geq m} V_t^{G_r}$} & &= \makebox[10em][c]{$\varinjlim_{t \geq m} V_t^{G_t}$} \\
        &\cong \makebox[10em][c]{$\varinjlim_{t \geq m} K_t \otimes_{K_m} V_m^{G_m}$} & &\cong \makebox[10em][c]{$K_{\infty} \otimes_{K_m} V_m^{G_m}$},
    \end{alignat*}
    % \begin{align*}
    %     V_{\infty}^{G^0\text{-}\sm} &= \bigcup_{r \geq m} V_{\infty}^{G_r},\\
    %     &= \bigcup_{r \geq m} \left( \varinjlim_{t \geq m} V_t  \right)^{G_r},\\
    %     &= \varinjlim_{r,t \geq m} V_t^{G_r},\\
    %     &= \varinjlim_{t \geq m} V_t^{G_t},\\
    %     &\cong \varinjlim_{t \geq m} K_t \otimes_{K_m} V_m^{G_m},\\
    %     &\cong K_{\infty} \otimes_{K_m} V_m^{G_m},
    % \end{align*}
    where we have used that the diagonal is cofinal in the direct limit. In particular, $V_{\infty}^{G^0\text{-}\sm}$ is a smooth $K_{\infty}[\SL_n(F)]$-module, which is free of finite rank over $K_{\infty}$, because the same is true of $V_m$ over $K_m$, and hence $\SL_n(F)$ has to act trivially by the following lemma.
\end{proof}

\begin{lemma}\label{lem:subgroupsSLnF}
    The only open normal subgroup of $\SL_n(F)$ is $\SL_n(F)$.
\end{lemma}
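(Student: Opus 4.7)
The plan is to show that any open normal subgroup $N \triangleleft \SL_n(F)$ contains every elementary matrix $E_{ij}(x) = I + x e_{ij}$ (for $i \neq j$ and $x \in F$), as these generate $\SL_n(F)$ by standard Gaussian elimination over the field $F$. The case $n = 1$ is vacuous, so I focus on $n \geq 2$.

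First, since $N$ is open and the principal congruence subgroups $\Gamma_k \coloneqq \ker(\SL_n(\OO_F) \twoheadrightarrow \SL_n(\OO_F / \pi^k))$ form a neighbourhood basis of the identity in $\SL_n(F)$, there is some $k \geq 1$ with $\Gamma_k \subseteq N$. In particular, $E_{12}(\pi^k \OO_F) \subseteq N$.

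Next, I exploit normality via conjugation by diagonal matrices to show $E_{12}(F) \subseteq N$. When $n \geq 3$, conjugating $E_{12}(\pi^k)$ by the diagonal matrix $\mathrm{diag}(t_1, t_2, (t_1 t_2)^{-1}, 1, \ldots, 1) \in \SL_n(F)$ yields $E_{12}(t_1 t_2^{-1} \pi^k)$; as $(t_1, t_2)$ varies over $(F^\times)^2$, the scalar $t_1 t_2^{-1} \pi^k$ sweeps out all of $F^\times$, and adding $I = E_{12}(0)$ gives $E_{12}(F) \subseteq N$. The case $n = 2$ requires slightly more care because conjugation by $\mathrm{diag}(t, t^{-1})$ only scales $x$ by $t^2$, which fails to cover $F^\times$ since $(F^\times)^2$ has finite (but nontrivial) index in $F^\times$. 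Here I use that $N$ contains the whole open subgroup $E_{12}(\pi^k \OO_F)$, not just a single element: conjugating by $\mathrm{diag}(\pi^{-m}, \pi^m)$ gives $E_{12}(\pi^{k - 2m} \OO_F) \subseteq N$ for every $m \geq 0$, and the ascending union $\bigcup_m \pi^{k - 2m} \OO_F = F$ yields $E_{12}(F) \subseteq N$.

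Finally, conjugating by signed permutation matrices (which lie in $\SL_n(F)$) transports $E_{12}(F) \subseteq N$ to $E_{ij}(F) \subseteq N$ for every pair $i \neq j$, and since elementary matrices generate $\SL_n(F)$, we conclude $N = \SL_n(F)$.

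The only mildly subtle point is the $n = 2$ case, where the squaring obstruction $[F^\times : (F^\times)^2] > 1$ prevents us from covering $F^\times$ by conjugating a single element; this is resolved by feeding the whole open neighbourhood $E_{12}(\pi^k \OO_F) \subseteq N$ into the diagonal conjugation and observing that the resulting tower of open $\OO_F$-modules exhausts the additive group $F$. Everything else is a routine application of normality plus the generation of $\SL_n$ by elementary matrices.
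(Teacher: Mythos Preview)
Your proof is correct and takes a genuinely different route from the paper's. The paper argues abstractly: passing to $\mathrm{PSL}_n(F)$ and invoking its simplicity \cite[Thm.\ 9.3]{LANG}, any normal subgroup $N$ of $\SL_n(F)$ is either contained in the centre (hence finite, hence not open) or has $NZ = \SL_n(F)$, in which case $\SL_n(F)/N$ is abelian and therefore trivial because $\SL_n(F)$ is perfect \cite[Thm.\ 9.2]{LANG}. Your argument is instead a direct computation: you feed the congruence subgroup $\Gamma_k \subset N$ through diagonal conjugation to produce all elementary matrices $E_{12}(F)$, then permute to get every $E_{ij}(F)$, and conclude by the elementary-matrix generation of $\SL_n$ over a field. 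The paper's approach is shorter and citation-based, and in fact classifies \emph{all} normal subgroups along the way; your approach is more self-contained and makes the role of openness completely explicit, including the nice observation that for $n=2$ one must exploit the entire open set $E_{12}(\pi^k\OO_F)$ rather than a single element to overcome the $[F^\times:(F^\times)^2]$ obstruction.
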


\begin{proof}
    Write $Z$ for the centre of $\SL_n(F)$. If $N$ is a normal subgroup of $\SL_n(F)$, then $NZ / Z$ is a normal subgroup of $\text{PSL}_n(F)$ which is simple \cite[Thm.\ 8.4, Thm.\ 9.3]{LANG}. If $NZ = Z$, then $N \subset Z$ and $N$ is finite, and thus $N$ cannot be open. If $NZ / Z = \text{PSL}_n(F)$, then $NZ = \SL_n(F)$ and $N$ is a finite index subgroup of $\SL_n(F)$ with abelian quotient $\SL_n(F) / N = NZ / N \cong Z / (Z \cap N)$. But this quotient must be trivial, as $\SL_n(F)$ is perfect \cite[Thm.\ 8.3, Thm.\ 9.2]{LANG}.
\end{proof}

\begin{defn}
For $m \geq 1$, we define $Y_m = G^m \times H_m$. 
\end{defn}

\begin{cor}\label{cor:maincorLT}
For any $m \geq 1$, the functors
\[
    (\OO_{\sM_m} \otimes_{L_m} -)^H \colon \Rep_{L_m}^{Y_m}(G \times H) \rightarrow \VectCon^{G}_{\LT,m}(\Omega) \rightarrow \Vect^{G}_{\LT,m}(\Omega)
\]
are equivalences of categories, and $\VectCon^{G}_{\LT,m}(\Omega)$ and $\Vect^{G}_{\LT,m}(\Omega)$ are closed under sub-quotients.

Furthermore, these are compatible with respect to the forgetful functor
\[
\Rep^{Y_m}_{L_m}(G \times H) \rightarrow \Rep^{Z_m}_{K_m}(G_0 \times H_0), \qquad V \mapsto e_{0} \cdot V
\]
where $e_0$ is the idempotent of $L_m$ corresponding to $K_m$, in the sense that the diagram
% https://q.uiver.app/#q=WzAsNixbMCwwLCJcXFJlcF57WV9tfV97TF9tfShHIFxcdGltZXMgSCkiXSxbMSwwLCJcXFZlY3RDb25ee0d9X3tcXExULG19KFxcT21lZ2EpIl0sWzEsMSwiXFxWZWN0Q29uXntHXzB9X3tcXExULG19KFxcT21lZ2EpIl0sWzAsMSwiXFxSZXBee1pfbX1fe0tfbX0oR18wIFxcdGltZXMgSF8wKSJdLFsyLDAsIlxcVmVjdF57R31fe1xcTFQsbX0oXFxPbWVnYSkiXSxbMiwxLCJcXFZlY3Ree0dfMH1fe1xcTFQsbX0oXFxPbWVnYSkiXSxbMCwxXSxbMSwyXSxbMCwzXSxbMywyXSxbMSw0XSxbMiw1XSxbNCw1XV0=
\[\begin{tikzcd}
	{\Rep^{Y_m}_{L_m}(G \times H)} & {\VectCon^{G}_{\LT,m}(\Omega)} & {\Vect^{G}_{\LT,m}(\Omega)} \\
	{\Rep^{Z_m}_{K_m}(G_0 \times H_0)} & {\VectCon^{G_0}_{\LT,m}(\Omega)} & {\Vect^{G_0}_{\LT,m}(\Omega)}
	\arrow[from=1-1, to=1-2]
	\arrow[from=1-1, to=2-1]
	\arrow[from=1-2, to=1-3]
	\arrow[from=1-2, to=2-2]
	\arrow[from=1-3, to=2-3]
	\arrow[from=2-1, to=2-2]
	\arrow[from=2-2, to=2-3]
\end{tikzcd}\]
commutes.
\end{cor}

\begin{proof}
For $\VectCon$, the functor is the composition of
\[
\OO_{\sM_m} \otimes_{L_m} - \colon \Rep^{Y_m}_{L_m}(G \times H) \rightarrow \VectCon^{G \times H / H_m}(\sM_m),
\]
from Section \ref{sect:generalities} (with $(X,G,H)$ taken as $(\sM_m, G \times H/H_m, G^m)$) and the equivalences
\[
(-)^H \colon \VectCon^{G \times H / H_m}(\sM_m) \xrightarrow{\sim} \VectCon^{G \times H / H_0}(\sM) \xrightarrow{\sim} \VectCon^{G}(\Omega)
\]
of \cite[Prop.\ 2.53]{TAY3} and \cite[\S 7.5]{TAY3}. The assumptions of Section \ref{sect:generalities} are satisfied, as $G \times H / H_m$ acts transitively on the components $(\sM_m^i)_{i \in \bZ}$ of $\sM_m$ \cite[\S 7.1]{TAY3}, and each connected component $X$ of $\sM_m$ is irreducible as a $G_m\text{-}\OO_{X}$-module by Corollary \ref{cor:irred} and the isomorphisms $\sM_m^i \xrightarrow{\sim} \sN_m$ induced by the action of a uniformiser of $H$. Therefore the first part of the statement follows from Proposition \ref{prop:mainprop} and Lemma \ref{lem:equivdefs}. The commutativity follows from the commutativity of
% https://q.uiver.app/#q=WzAsNCxbMCwwLCJcXFJlcF57Wl9tfV97TF9tfShHIFxcdGltZXMgSCkiXSxbMSwwLCJcXFZlY3RDb25ee0cgXFx0aW1lcyBIIC8gSF9tfShcXHNNX20pIl0sWzEsMSwiXFxWZWN0Q29uXntHXzAgXFx0aW1lcyBIXzAgLyBIX219KFxcc05fbSkiXSxbMCwxLCJcXFJlcF57Wl9tfV97S19tfShHXzAgXFx0aW1lcyBIXzApIl0sWzAsMV0sWzEsMl0sWzAsM10sWzMsMl1d
\[\begin{tikzcd}[ampersand replacement=\&]
	{\Rep^{Y_m}_{L_m}(G \times H)} \& {\VectCon^{G \times H / H_m}(\sM_m)} \\
	{\Rep^{Z_m}_{K_m}(G_0 \times H_0)} \& {\VectCon^{G_0 \times H_0 / H_m}(\sN_m)}
	\arrow[from=1-1, to=1-2]
	\arrow[from=1-1, to=2-1]
	\arrow[from=1-2, to=2-2]
	\arrow[from=2-1, to=2-2]
\end{tikzcd}\]
which commutes by construction, and the commutativity of
% https://q.uiver.app/#q=WzAsNixbMSwwLCJcXFZlY3RDb25ee0cgXFx0aW1lcyBIIC8gSF8wfShcXHNNKSJdLFsxLDEsIlxcVmVjdENvbl57R18wfShcXHNOKSJdLFsyLDAsIlxcVmVjdENvbl57R30oXFxPbWVnYSkiXSxbMiwxLCJcXFZlY3RDb25ee0dfMH0oXFxPbWVnYSkiXSxbMCwwLCJcXFZlY3RDb25ee0cgXFx0aW1lcyBIIC8gSF9tfShcXHNNX20pIl0sWzAsMSwiXFxWZWN0Q29uXntHXzAgXFx0aW1lcyBIXzAgLyBIX219KFxcc05fbSkiXSxbMCwxXSxbMCwyLCJcXHNpbSJdLFsxLDMsIlxcc2ltIl0sWzIsM10sWzQsNV0sWzQsMCwiXFxzaW0iXSxbNSwxLCJcXHNpbSJdXQ==
\[\begin{tikzcd}[ampersand replacement=\&]
	{\VectCon^{G \times H / H_m}(\sM_m)} \& {\VectCon^{G \times H / H_0}(\sM)} \& {\VectCon^{G}(\Omega)} \\
	{\VectCon^{G_0 \times H_0 / H_m}(\sN_m)} \& {\VectCon^{G_0}(\sN)} \& {\VectCon^{G_0}(\Omega)}
	\arrow["\sim", from=1-1, to=1-2]
	\arrow[from=1-1, to=2-1]
	\arrow["\sim", from=1-2, to=1-3]
	\arrow[from=1-2, to=2-2]
	\arrow[from=1-3, to=2-3]
	\arrow["\sim", from=2-1, to=2-2]
	\arrow["\sim", from=2-2, to=2-3]
\end{tikzcd}\]
which commutes because the left-hand square does by definition and the right-hand square does by \cite[Lem.\ 7.10]{TAY3}. The same holds with $\VectCon$ replaced by $\Vect$ everywhere.
\end{proof}

\begin{remark}
    For the equivalence of categories part of Corollary \ref{cor:maincorLT} one doesn't need the full force of the results of Section \ref{sect:generalities}, it being possible to show directly that $(\OO_{\sM_m} \otimes_{L_m} -)^H$ has inverse $(\OO(\sM_m) \otimes_{\OO(\Omega)} (-)(\Omega))^{G^m}$, the key property used being the characterisation of Lemma \ref{lem:equivdefs}.
\end{remark}

For $m' \geq m$, there is a fully faithful inclusion functor
\[
L_{m'} \otimes_{L_m} - \colon \Rep^{Y_m}_{L_m}(G \times H) \rightarrow \Rep^{Y_{m'}}_{L_{m'}}(G \times H),
\]
and this is compatible with inclusion of Lubin-Tate bundles of levels $m$ and $m'$ in the sense that
% https://q.uiver.app/#q=WzAsNixbMCwxLCJcXFJlcF57Wl9tfV97TF9tfShHIFxcdGltZXMgSCkgIl0sWzEsMSwiXFxWZWN0Q29uXntHfV97XFxMVCxtfShcXE9tZWdhKSJdLFswLDAsIlxcUmVwXntaX3ttJ319X3tMX3ttJ319KEcgXFx0aW1lcyBIKSJdLFsxLDAsIlxcVmVjdENvbl57R31fe1xcTFQsbSd9KFxcT21lZ2EpIl0sWzIsMCwiXFxWZWN0XntHfV97XFxMVCxtJ30oXFxPbWVnYSkiXSxbMiwxLCJcXFZlY3Ree0d9X3tcXExULG19KFxcT21lZ2EpIl0sWzAsMSwiXFxzaW0iXSxbMCwyLCIiLDIseyJzdHlsZSI6eyJ0YWlsIjp7Im5hbWUiOiJob29rIiwic2lkZSI6InRvcCJ9fX1dLFsxLDMsIiIsMCx7InN0eWxlIjp7InRhaWwiOnsibmFtZSI6Imhvb2siLCJzaWRlIjoidG9wIn19fV0sWzIsMywiXFxzaW0iXSxbMyw0LCJcXHNpbSJdLFsxLDUsIlxcc2ltIl0sWzUsNCwiIiwxLHsic3R5bGUiOnsidGFpbCI6eyJuYW1lIjoiaG9vayIsInNpZGUiOiJ0b3AifX19XV0=
\[\begin{tikzcd}
	{\Rep^{Z_{m'}}_{L_{m'}}(G \times H)} & {\VectCon^{G}_{\LT,m'}(\Omega)} & {\Vect^{G}_{\LT,m'}(\Omega)} \\
	{\Rep^{Z_m}_{L_m}(G \times H) } & {\VectCon^{G}_{\LT,m}(\Omega)} & {\Vect^{G}_{\LT,m}(\Omega)}
	\arrow["\sim", from=1-1, to=1-2]
	\arrow["\sim", from=1-2, to=1-3]
	\arrow[hook, from=2-1, to=1-1]
	\arrow["\sim", from=2-1, to=2-2]
	\arrow[hook, from=2-2, to=1-2]
	\arrow["\sim", from=2-2, to=2-3]
	\arrow[hook, from=2-3, to=1-3]
\end{tikzcd}\]
commutes.

\begin{defn}
    We define 
    \begin{align*}
    \Psi_{G} &\colon \Rep_K^{\sm}(G) \rightarrow \VectCon_{\LT}^{G}(\Omega),\\
    \Psi_{H} &\colon \Rep_K^{\sm}(H) \rightarrow \VectCon_{\LT}^{G}(\Omega),
    \end{align*}
    as the direct limits of the functors
    \begin{align*}
    \Rep_K^{G^m}(G) \xrightarrow{L_m \otimes_K -} \Rep_{L_m}^{Y_m}(G \times H) \rightarrow \VectCon^{G}_{\LT,m}(\Omega),\\
    \Rep_K^{H^m}(H) \xrightarrow{L_m \otimes_K -} \Rep_{L_m}^{Y_m}(G \times H) \rightarrow \VectCon^{G}_{\LT,m}(\Omega).
    \end{align*}
\end{defn}

Here we have used that the categories $\Rep_K^{G^m}(G)$ exhaust $\Rep_K^{\sm}(G)$, which is true because $\Rep_K^{G_m}(G) = \Rep_K^{G^m}(G)$ as a consequence of the following lemma.

\begin{lemma}\label{lem:normaliserGm}
    The normal closure of $G_m$ in $G$ is $G^m$.
\end{lemma}

\begin{proof}
    Let $N$ be the normal closure of $G_m$ in $G$. Then $N \cap \SL_n(F)$ is an open normal subgroup of $\SL_n(F)$, hence $\SL_n(F) \subset N$ by Lemma \ref{lem:subgroupsSLnF}, and thus $G^m \subset N$, as $G^m = \SL_n(F)G_m$ because the determinant map $\det \colon G_m \rightarrow 1 + \pi^m \OO_F$ is surjective.
\end{proof}

\begin{remark}\label{rem:explicitformGH}
By abuse of notation we will also view $\Psi_{G}$ and $\Psi_H$ as functors to $\Vect^{G}_{\LT}(\Omega)$.
\end{remark}

Kohlhaase also considered functors from $\Rep_K^{\sm}(G)$ and $\Rep_K^{\sm}(H)$ to $\Vect_{\LT}^{G}(\Omega)$ \cite[Thm.\ 4.5]{KOH}, using the equivalences,
\[
\bD_{\LT} \colon \Vect^{G}_{\LT}(\Omega) \xleftrightarrow{\sim} \Vect^{H}_{\Dr}(\bP^{n-1}) : \! \bD_{\Dr},
\]
where $\Vect^{H}_{\Dr}(\bP^{n-1})$ is an analogously defined category of $H$-equivariant \emph{Drinfeld bundles} on $\bP^{n-1}$. The following shows that these coincide with $\Psi_{G}$ and $\Psi_{H}$.

\begin{lemma}\label{lem:explicitformofGH}
    For $V \in \Rep_K^{\sm}(G)$ and $W \in \Rep_K^{\sm}(H)$ there are natural isomorphisms,
    \begin{align*}
        \Psi_{G}(V) &\cong \OO_{\Omega} \otimes_K V, \\
        \Psi_{H}(W) &\cong \bD_{\Dr}(\OO_{\bP^{n-1}} \otimes_K W)
    \end{align*}
    in $\Vect_{\LT}^{G}(\Omega)$.
\end{lemma}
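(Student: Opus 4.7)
The plan is to leverage the explicit formula
\[
\Psi_{G \times H}(V) = (\OO_{\sM_m} \otimes_K V)^{H/H_m}
\]
from Remark \ref{rem:explicitformGH}, where the right-hand side is interpreted as the image of the $(G \times H/H_m)$-equivariant bundle $\OO_{\sM_m} \otimes_K V$ (diagonal action) under the descent equivalence $\Vect^{G \times H/H_m}(\sM_m) \xrightarrow{\sim} \Vect^G(\Omega)$ constructed in the proof of Lemma \ref{lem:fullyfaithfulLT} (and the analogous one for $\VectCon$). Each isomorphism is then a consequence of identifying the equivariant bundle $\OO_{\sM_m} \otimes_K V$ with a pullback.

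For the first isomorphism, I would fix $m \geq 1$ such that $V^{G_m} = V$ and view $V \in \Rep_K^{Z_m}(G \times H)$ via the trivial $H$-action. Because $H$ acts trivially on $V$, the diagonal $(G \times H/H_m)$-equivariant structure on $\OO_{\sM_m} \otimes_K V$ coincides with that of the pullback $f_m^*(\OO_\Omega \otimes_K V)$, where $\OO_\Omega \otimes_K V$ carries the diagonal $G$-action. Since $f_m^*$ is a quasi-inverse to the descent equivalence used above, this identification descends to the desired natural isomorphism $\Psi_G(V) \cong \OO_\Omega \otimes_K V$ in $\Vect^G_{\LT}(\Omega)$.

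For the second isomorphism, I would transport the analogous computation through $\bD_{\LT}$ to the Drinfeld side. First I would set up a $G$-level analog of Proposition \ref{prop:compatiblewithequiv} by defining, for each $m \geq 1$, the finite-level Drinfeld functor
\[
\Phi_{\Dr, m}(U) \coloneqq (\OO_{\sY_m} \otimes_{L_m} U)^G \colon \Rep_{L_m}^{Z_m}(G \times H) \to \Vect^H(\bP^{n-1}),
\]
built from the analogous descent equivalence $\Vect^{G/G_m \times H}(\sY_m) \xrightarrow{\sim} \Vect^H(\bP^{n-1})$, and then verifying that $\bD_{\LT} \circ \Phi_{\LT, m} \cong \Phi_{\Dr, m}$ by the same global-sections computation as in the proof of Proposition \ref{prop:compatiblewithequiv}. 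With this compatibility in hand, for $W \in \Rep_K^{\sm}(H)$ satisfying $W^{H_m} = W$ and viewed with trivial $G$-action,
\[
\bD_{\LT}(\Psi_H(W)) = \bD_{\LT}(\Phi_{\LT, m}(L_m \otimes_K W)) \cong \Phi_{\Dr, m}(L_m \otimes_K W) = (\OO_{\sY_m} \otimes_K W)^{G/G_m},
\]
and the symmetric version of the argument for the first isomorphism (with the roles of $G$ and $H$, $\sM_m$ and $\sY_m$, $\Omega$ and $\bP^{n-1}$ swapped) identifies the right-hand side with $\OO_{\bP^{n-1}} \otimes_K W$. Applying $\bD_{\Dr}$ yields $\Psi_H(W) \cong \bD_{\Dr}(\OO_{\bP^{n-1}} \otimes_K W)$.

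The main technical obstacle is establishing the $G$-level compatibility $\bD_{\LT} \circ \Phi_{\LT, m} \cong \Phi_{\Dr, m}$, which requires carefully unwinding Kohlhaase's definition of $\bD_{\LT}$ at the large-group level and checking that it interacts with the finite-level constructions in the expected way. Once this foundational piece is in place, both isomorphisms follow from transparent pullback identifications, and naturality in $V$ and $W$ is automatic since every step is functorial.
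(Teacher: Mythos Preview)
Your treatment of the first isomorphism is essentially the paper's: both use the explicit formula $\Psi_{G}(V)=(\OO_{\sM_m}\otimes_K V)^{H/H_m}$ from Remark \ref{rem:explicitformGH} and the triviality of the $H$-action on $V$ to pull the tensor factor $V$ outside the $H/H_m$-invariants, so that $(\OO_{\sM_m}\otimes_K V)^{H/H_m}=\OO_{\sM_m}^{H/H_m}\otimes_K V=\OO_\Omega\otimes_K V$.

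For the second isomorphism the routes genuinely differ. You propose to first upgrade Proposition \ref{prop:compatiblewithequiv} to a $G$-level statement $\bD_{\LT}\circ\Phi_{\LT,m}\cong\Phi_{\Dr,m}$ on all of $\Rep_{L_m}^{Z_m}(G\times H)$, and then run the symmetric version of the first argument on the Lubin--Tate side. The paper instead avoids setting up that general compatibility and works directly with the \emph{target} bundle: it computes $\Gamma(\Omega,\bD_{\Dr}(\OO_{\bP^{n-1}}\otimes_K W))$ by unwinding Kohlhaase's formula \cite[\S 4 (26)]{KOH} and the identities $\mathbb{C}_\infty^{H_m}=\mathbb{A}_m$ \cite[Thm.\ 4.3]{KOH} and \cite[Lem.\ 3.3]{KOH}, obtaining $(\mathbb{A}_m\otimes_K W)^{H/H_m}$, which is exactly $\Gamma(\Omega,\Psi_H(W))$ by Remark \ref{rem:explicitformGH}; since everything is quasi-Stein this suffices. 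Your approach is more structural and would yield a reusable compatibility statement, but the ``main technical obstacle'' you flag is real: Kohlhaase's $\bD_{\LT}$ at the $G$-level is defined via a limit over all levels rather than a single $C_m$, so the bookkeeping is heavier than in the $G_0$-case. The paper's approach sidesteps this by only checking what is needed for the specific bundle $\OO_{\bP^{n-1}}\otimes_K W$, at the cost of being less conceptual.
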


\begin{proof}
    If $G^m$ acts trivially on $V$, then
    \begin{align*}
    \Psi_{G}(V)
    &= (\OO_{\sM_m} \otimes_K V)^{H/H_m}, \\
    &= \OO_{\sM_m}^{H/H_m} \otimes_K V, \\
    &= \OO_{\Omega} \otimes_K V
    \end{align*}
    because $H$ acts trivially on $V$. For the description of $\Psi_{H}(W)$, as all spaces are the disjoint union of quasi-Stein spaces it is sufficient to give the isomorphism on global sections. As $W$ is smooth some $H_m$ acts trivially on $W$, and from \cite[\S 4 (26)]{KOH} we have that (in the notation of \cite{KOH}),
    \begin{align*}
        \Gamma(\Omega, \bD_{\Dr}(\OO_{\bP^{n-1}} \otimes_K W))
        &= (\mathbb{C}_{\infty} \otimes_{K} W)^{H}, \\
        &\equiv (\mathbb{C}_{\infty}^{H_m} \otimes_{K} W)^{H/H_m}, \\
        &\equiv (\mathbb{A}_{m} \otimes_{K} W)^{H/H_m},\\
        &= \Gamma(\Omega, \Psi_G(V))
    \end{align*}
    using \cite[Lem.\ 3.3]{KOH} for the second equality, and that $\mathbb{C}_{\infty}^{H_m} = \mathbb{A}_m$ \cite[Thm.\ 4.3]{KOH} for the third.
\end{proof}

\begin{prop}
    The functors $\Psi_{G}$ and $\Psi_{H}$ are fully faithful and preserve irreducibility.
\end{prop}
\begin{proof}
    Each is the composition of fully faithful functors. The functors
    \begin{align*}
    L_m \otimes_K - \colon \Rep_K^{G^m}(G) \rightarrow \Rep_{L_m}^{Y_m}(G \times H),\\
    L_m \otimes_K - \colon \Rep_K^{H^m}(H) \rightarrow \Rep_{L_m}^{Y_m}(G \times H),
    \end{align*}
    both preserve irreducibility, by Corollary \ref{cor:repsfunctors} with $(F,G,H)$ taken as $(L_m, G / G^m \times H / H_m, H/H_m)$ and $(L_m, G / G^m \times H / H_m, G/G^m)$ respectively, and the functors $\Rep_{L_m}^{Y_m}(G \times H) \rightarrow \VectCon^G_{\LT,m}(\Omega)$ and $\Rep_{L_m}^{Y_m}(G \times H) \rightarrow \Vect^G_{\LT,m}(\Omega)$ preserve irreducibility by Corollary \ref{cor:maincorLT}.
\end{proof}

It turns out that $\Psi_H$ is actually an equivalence of categories.

\begin{thm}\label{thm:finalmainthmLT}
    The functors
    \[
    \Rep_K^{\sm}(H) \xrightarrow{\Psi_{H}} \VectCon^{G}_{\LT}(\Omega) \rightarrow \Vect^{G}_{\LT}(\Omega)
    \]
    are equivalences of categories.
\end{thm}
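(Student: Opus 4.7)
The plan is to apply Theorem~\ref{thm:mainthmLT} to reduce the claim to showing that the composite
\[
\Rep_K^{\sm}(H) \hookrightarrow \Rep_K^{\sm}(G \times H) \xrightarrow{L_\infty \otimes_K -} \Rep_{L_\infty}^{\sm}(G \times H),
\]
in which an $H$-representation is inflated via the trivial $G$-action, is an equivalence of categories. Both asserted equivalences will then follow, since Theorem~\ref{thm:mainthmLT} already implies that the forgetful functor $\VectCon^G_{\LT}(\Omega) \to \Vect^G_{\LT}(\Omega)$ is an equivalence. Full faithfulness of this composite is immediate from Corollary~\ref{cor:repsfunctors}(1) applied to the triple $(L_\infty, G \times H, G)$, once we record that $L_\infty^G = K$: by Lemma~\ref{lem:explicitaction}, $G$ acts on $L_m = \prod_{i \in \bZ} K_m$ by combining a shift indexed by $\nu \circ \det$ with the Galois action $\det \bmod (1 + \pi^m \OO_F)$, and both layers of invariants cut down to $K$.

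For essential surjectivity, fix $W \in \Rep_{L_\infty}^{\sm}(G \times H)$ and choose $m$ with $W \in \Rep^m_{L_\infty}(G \times H)$, so that $W = L_\infty \otimes_{L_m} N$ for $N := W^{Z_m} \in \Rep_{L_m}^{Z_m}(G \times H)$. Since taking $G$-invariants commutes with the base change $L_\infty \otimes_{L_m} -$, one has $W^G = N^G$ and $L_\infty \cdot W^G = L_\infty \otimes_{L_m}(L_m \cdot N^G)$. Corollary~\ref{cor:repsfunctors}(2), applied to $(L_m, G \times H, G)$, therefore reduces the task to showing $L_m \cdot N^G = N$; combined with the injectivity provided by Corollary~\ref{cor:repsinjective}, this amounts to the bound $\dim_K N^G \geq \rank_{L_m} N =: r$.

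I expect this dimension bound to be the main obstacle, and would prove it by exploiting the filtration $G^m \trianglelefteq G^0 \trianglelefteq G$. Since $Z_m$ acts trivially on $N$, so does $G_m$, and therefore so does the normal closure of $G_m$ in $G$; by Lemma~\ref{lem:normaliserGm} this normal closure is $G^m$, so the $G$-action on $N$ factors through $G/G^m \cong F^\times / (1 + \pi^m \OO_F)$. Decomposing $N = \prod_i N_i$ along the principal idempotents of $L_m = \prod_i K_m$, each $N_i$ is a finite-dimensional $K_m$-vector space carrying a semi-linear action of the finite Galois group $G^0/G^m \cong \Gal(\breve{F}_m/\breve{F})$, so the Hilbert~90 statement for $\GL_r$ yields $\dim_K N_i^{G^0} = \dim_{K_m} N_i = r$. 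The cyclic quotient $G/G^0 \xrightarrow{\sim} \bZ$ then acts on $N^{G^0} = \prod_i N_i^{G^0}$ by pure shift (its action on the base field $K = K_m^{G^0/G^m}$ being trivial), so its fixed points are the diagonal copy of $N_0^{G^0}$, of $K$-dimension $r$.

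Granting this, $V := W^G$ is a $K$-vector space of dimension $r$ carrying a $K$-linear $H$-action (linearity because $L_\infty^G = K$ is also $H$-fixed), and smoothness of $V$ follows at once from $H_m$ acting trivially on $W$. The resulting isomorphism $L_\infty \otimes_K V \xrightarrow{\sim} W$ then exhibits $W$ as lying in the essential image, completing the proof.
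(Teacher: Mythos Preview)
Your proof is correct and follows essentially the same route as the paper's: both reduce to level $m$, invoke Lemma~\ref{lem:normaliserGm} to see that $G^m$ acts trivially on $N$, and then descend along $G/G^m$ in two stages---Galois descent for $G^0/G^m$ acting on each $K_m$-factor, followed by $\bZ$-invariants for the shift action of $G/G^0$. The paper packages these two steps as the categorical equivalences $\Vect^H(\Spec K) \simeq \Vect^{G/G^0 \times H}(X_0) \simeq \Vect^{G/G^m \times H}(X_m)$ (via \cite[Ex.~2.33]{TAY3} and \cite[Prop.~2.52]{TAY3} respectively), whereas you carry out the same descent by a direct dimension count; the content is identical. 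One small remark: your claim that ``taking $G$-invariants commutes with $L_\infty \otimes_{L_m} -$'' is true but not immediate---it relies on $L_{m'}^{G^m} = L_m$ together with the fact (which you only establish afterwards) that $G^m$ acts trivially on $N$---so you may wish to reorder, or simply bypass this step since your Hilbert~90/shift argument already yields $L_m \otimes_K N^G \xrightarrow{\sim} N$ directly.
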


\begin{proof}
    This follows from Corollary \ref{cor:maincorLT}, once we show that each
    \[
    L_{m} \otimes_K - \colon \Rep_K^{H_m}(H) \rightarrow \Rep^{Y_m}_{L_{m}}(G \times H) 
    \]
    is an equivalence. To see this, set
    \begin{align*}
        X_0 = \bigsqcup_{i \in \bZ} \Spec(c(\sM^i)), \qquad X_m = \bigsqcup_{i \in \bZ} \Spec(c(\sM^i_m)),
    \end{align*}
    and note that there are are pullback functors
    \[
    \Vect^{H/H_m}(\Spec(K)) \rightarrow \Vect^{G / G^0 \times H/H_m}(X_0) \rightarrow \Vect^{G / G^{m} \times H/H_m}(X_m),
    \]
    which are equivalences: the second as $G^0 / G^m$ is the Galois group of $X_m / X_0$ (\cite[Prop.\ 2.53]{TAY3}) and the first by \cite[Ex.\ 2.33]{TAY3}, using the fact that $G / G^0$ acts on the connected components of $X_0$ simply transitively. Taking global sections, this becomes
    \[
    \Rep_K^{H_m}(H) \xrightarrow{\sim} \Rep_{c(\sM)}^{G^0 \times H_m}(G \times H) \xrightarrow{\sim} \Rep_{L_m}^{Y_m}(G \times H)
    \]
    the composition of which coincides with $L_m \otimes_K -$ above. 
\end{proof}

\begin{remark}\label{rem:PsiGnotequivingeneral}
    Similarly, one can show $\Psi_G$ is also an equivalence when $n = 1$ (cf.\ \cite[Prop.\ 4.6]{KOH}).
    
    When $n \geq 2$, however, $\Psi_G$ is not an equivalence. To construct an object outside the essential image, let $W \in \Rep_K^{\sm}(H)$ be any representation of $H$ which is non-trivial on $\SL_1(D)$, the kernel of the reduced norm $\Nrd \colon H \rightarrow F^{\times}$. For example one could take $W$ to be any of the representations $V_m$ of \cite[Ex.\ 7.17]{TAY3}, where the condition that $n \geq 2$ is used to ensure that $\SL_1(D) \neq 0$.
    
    Let $L$ be an extension of $K$ containing $F^{\ab}$. Because $\Psi_H$ commutes with base change (by Remark \ref{rem:dualfunctorcompat} and \cite[Lem.\ B.2]{TAY3}), $\Psi_H(W)_L \cong \Psi_{H, L}(W_L)$, and by \cite[Thm.\ C]{TAY3} $\Psi_{H, L}(W_L)$ is non-trivial when restricted to $\VectCon(\Omega)$, in the sense that it is not the direct sum of $\dim_K(W)$ copies of $\OO_{\Omega}$.

    On the other hand, by Lemma \ref{lem:explicitformofGH} we have that for every $V \in \Rep_K^{\sm}(G)$, $\Psi_G(V)_L = \OO_{\Omega} \otimes_K V = \OO_{\Omega, L} \otimes_L V_L$, and therefore $\Psi_G(V)_L$ is isomorphic to the direct sum of $\dim_K(V)$ copies of $\OO_{\Omega_L}$ when restricted to $\VectCon(\Omega)$.

    In particular, $\Psi_{H}(W) \not\cong \Psi_G(V)$ for any $V \in \Rep_K^{\sm}(G)$, and therefore $\Psi_{H}(W)$ is not in the essential image of $\Psi_G$.
    % $\Psi_{H, L}(W_L)$ is non-trivial when restricted to 

    % From Lemma \ref{lem:explicitformofGH} we have that for every $V \in \Rep_K^{\sm}(G)$, $\Psi_G(V) = \OO_{\Omega} \otimes_K V$, and therefore $\Psi_G(V)$ is isomorphic to the direct sum of $\dim_k(V)$ copies of $\OO_{\Omega}$ when restricted to $\VectCon(\Omega)$.
    
    % Suppose that $W \in \Rep_K^{\sm}(H)$ has $\Psi_{H}(W) \cong \Psi_G(V)$ for some $V \in \Rep_K^{\sm}(G)$, and so similarly becomes trivial in $\VectCon(\Omega)$. Using \cite[Thm.\ C]{TAY3} this implies that the restricition of $W$ to $\SL_1(D)$ is trivial, where $\SL_1(D)$ is the kernel of $\Nrd \colon H \rightarrow F^{\times}$, and so $W$ is contained in the essential image of the inflation functor
    % \[
    %     \Rep_{K}^{\sm}(F^{\times}) \xrightarrow{\Nrd^*} \Rep_{K}^{\sm}(H).
    % \]

    % When $n \geq 2$, however, this inflation functor $\Nrd^*$ is not essentially surjective: there are objects $W$ of $\Rep_{K}^{\sm}(H)$ which are non-trivial on $\SL_1(D)$, and from the above reasoning $\Psi_H(W)$ for such $W$ cannot be in the essential image of $\Psi_G$.
    
    % An example of such representations outside the essential image of $\Nrd^*$ are the $V_m \in \Rep_{K}^{\sm}(H)$ of \cite[Ex.\ 7.17]{TAY3}, where the condition that $n \geq 2$ is used to ensure that $\SL_1(D) \neq 0$.
\end{remark}

\begin{cor}
    The functor
    \[
    \OO_{\bP^{n-1}} \otimes_K - \colon \Rep_K^{\sm}(H) \rightarrow \Vect^H_{\Dr}(\bP^{n-1})
    \]
    is an equivalence of categories.
\end{cor}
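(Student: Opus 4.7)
The plan is to deduce this corollary directly from Theorem \ref{thm:finalmainthmLT} by factoring $\Psi_H$ through the Drinfeld functor and using the Kohlhaase equivalence $\bD_{\Dr}$. Specifically, Lemma \ref{lem:explicitformofGH} gives a natural isomorphism
\[
\Psi_H(W) \cong \bD_{\Dr}(\OO_{\bP^{n-1}} \otimes_K W)
\]
for $W \in \Rep_K^{\sm}(H)$, which means $\Psi_H$ factors (up to natural isomorphism) as the composition
\[
\Rep_K^{\sm}(H) \xrightarrow{\OO_{\bP^{n-1}} \otimes_K -} \Vect^H_{\Dr}(\bP^{n-1}) \xrightarrow{\bD_{\Dr}} \Vect^G_{\LT}(\Omega).
\]

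First I would verify that the right-hand functor $\bD_{\Dr}$ is an equivalence of categories; this is the Kohlhaase equivalence recalled at the start of Section \ref{sect:DRside}, giving mutually inverse equivalences $\bD_{\LT}$ and $\bD_{\Dr}$ between $\Vect^G_{\LT}(\Omega)$ and $\Vect^H_{\Dr}(\bP^{n-1})$. Second I would invoke Theorem \ref{thm:finalmainthmLT}, which asserts that $\Psi_H$ is an equivalence from $\Rep_K^{\sm}(H)$ to $\Vect^G_{\LT}(\Omega)$.

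The conclusion is then purely formal 2-out-of-3: given a natural isomorphism $\Psi_H \cong \bD_{\Dr} \circ (\OO_{\bP^{n-1}} \otimes_K -)$ in which both $\Psi_H$ and $\bD_{\Dr}$ are equivalences, the remaining functor $\OO_{\bP^{n-1}} \otimes_K -$ must itself be an equivalence (one can take its quasi-inverse to be $\bD_{\LT}$ composed with a quasi-inverse of $\Psi_H$). There is no real obstacle here, since all the hard work has been absorbed into Theorem \ref{thm:finalmainthmLT} and the Kohlhaase equivalence; the only check is that the natural isomorphism of Lemma \ref{lem:explicitformofGH} is genuinely natural in $W$, which is visible from the formula on global sections given in its proof.
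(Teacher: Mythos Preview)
Your proposal is correct and matches the paper's own proof, which simply reads ``This follows directly from Lemma \ref{lem:explicitformofGH} and Theorem \ref{thm:finalmainthmLT}.'' You have just spelled out the implicit 2-out-of-3 argument using the Kohlhaase equivalence $\bD_{\Dr}$.
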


\begin{proof}
    This follows directly from Lemma \ref{lem:explicitformofGH} and Theorem \ref{thm:finalmainthmLT}.
\end{proof}

\section{$G^0$-Finite Vector Bundles on $\Omega$}\label{sect:G0finite}

In this section we consider the functors
\begin{align*}
    \Phi_H \coloneqq \Hom_{H}(-, f_* \OO_{\sM_{\infty}}) &\colon \Rep_K^{\sm}(H) \rightarrow \Vect^G(\Omega), \\
    \Phi_H^0 \coloneqq \Hom_{H_0}(-, f_* \OO_{\sN_{\infty}}) &\colon \Rep_K^{\sm}(H_0) \rightarrow \Vect^{G^0}(\Omega),
\end{align*}
which are defined in the same way as \cite[\S7.7, \S7.8]{TAY3}. We describe how one can use the results of Section \ref{sect:generalities} and Section \ref{sect:infintesimalaction} to show that main result of \cite{TAY3} also holds for these functors.

\begin{remark}\label{rem:dualfunctorcompat}
    For $V \in \Rep_K^{\sm}(H)$ and $W \in \Rep_K^{\sm}(H_0)$, there are natural isomorphisms
    \begin{align*}
    \Phi_H(V) &\cong \Psi_H(V^*), \\
    \Phi_H^0(W) &\cong \Psi^0_H(W^*)
    \end{align*}
    as $G$-equivariant (resp. $G_0$-equivariant) vector bundles (with connection). This follows from Remark \ref{rem:explicitformGH} together with \cite[Thm.\ 6.1(1)]{TAY3} and the construction of $\Phi_H^0$ and $\Phi_H$.
\end{remark}

\begin{thm}\label{thm:finiteVB1}
    The functors $\Phi_H^0$ and $\Phi_H$ are exact, monoidal, and fully faithful. The diagram
% https://q.uiver.app/#q=WzAsNCxbMCwwLCJcXFJlcF9LXntcXHNtfShIKSJdLFsxLDAsIlxcVmVjdF57R30oXFxPbWVnYSkiXSxbMCwxLCJcXFJlcF9LXntcXHNtfShIXzApIl0sWzEsMSwiXFxWZWN0XntHXjB9KFxcT21lZ2EpIl0sWzAsMSwiXFxQaGlfSCJdLFswLDJdLFsyLDMsIlxcUGhpX3tIXzB9Il0sWzEsM11d
\[\begin{tikzcd}
	{\Rep_K^{\sm}(H)} & {\Vect^{G}(\Omega)} \\
	{\Rep_K^{\sm}(H_0)} & {\Vect^{G^0}(\Omega)}
	\arrow["{\Phi_H}", from=1-1, to=1-2]
	\arrow[from=1-1, to=2-1]
	\arrow[from=1-2, to=2-2]
	\arrow["{\Phi_{H}^0}", from=2-1, to=2-2]
\end{tikzcd}\]
    commutes, and the essential image of each is closed under sub-quotients.
\end{thm}

\begin{proof}
    The triples $(\sN_m, G^0 \times H_0 / H_m, G^0)$ and $(\sM_m, G \times H / H_m, G)$ satisfy the assumptions of Section \ref{sect:generalities}, which can be checked similarly to the start of Section \ref{sect:DRside0} and the start of Section \ref{sect:DRside}. The fact that $\Phi_H^0$ and $\Phi_H$ are exact, monoidal, fully faithful and have essential images closed under sub-quotients then follows exactly as in \cite[\S7.10]{TAY3}, which only uses \cite[\S 6]{TAY3}, which itself goes through with $\VectCon$ replaced with $\Vect$ by appealing to the results Section \ref{sect:generalities} in lieu of the results of \cite[\S 4]{TAY3}. The compatibility between $\Phi^0_H$ with $\Phi_H$ follows analogously to the case for $\VectCon$ \cite[\S 7.8]{TAY3}.
\end{proof}

We can also describe the essential images of $\Phi_H^0$ and $\Phi_H$, by modifying the methods of \cite{TAY3} to our setting. The main difference is that unlike $\VectCon^{G^0}(\Omega)$, the category $\Vect^{G^0}(\Omega)$ is not obviously abelian (it might fail to be closed under quotients), and therefore one needs to also use the larger abelian category $\Coh^{G^0}(\Omega)$.

We need a couple of notions from \cite[\S 5]{TAY3}. For a set of objects $\sS$ of $\Coh^{G^0}(\Omega)$, we follow \cite[\S 3]{TAY3} and write $\sC(\sS)$ for the full subcategory of $\Coh^{G^0}(\Omega)$ whose objects are those objects isomorphic to a quotient of a sub-object of a direct sum of objects of $\sS$. For $\sV \in \Coh^{G^0}(\Omega)$, we set $\sC(\sV) \coloneqq \sC(\sS_{\sV})$ where $\sS_{\sV}$ is the set of objects $\{(\sV)^{\otimes s} \otimes (\sV^*)^{\otimes t}\}_{s,t \in \bZ_{\geq 0}}$. For $f  = \sum_{n \geq 0} a_n x^n \in \bZ_{\geq 0}[x]$ and $\sV \in \Vect^{G^0}(\Omega)$, we define
\[
	f(\sV) \coloneqq \bigoplus_{n \geq 0} (\sV^{\otimes n})^{\oplus a_n} \in \Vect^{G^0}(\Omega).
\]
\begin{defn}\label{defn:finiteobj}
An object $\sV \in \Vect^{G^0}(\Omega)$ is \emph{finite} if
\begin{enumerate}
    \item there are $f,g \in \bZ_{\geq 0}[x]$ with $f \neq g$ and $f(\sV) \cong g(\sV)$,
    \item for any complete field extension $L$ of $K$, $\sC(\sV_L) \subset \Vect^{G^0}(\Omega_L)$.
\end{enumerate}

We write $\Vect^{G^0}(\Omega)_{\fin}$ for the full subcategory of $\Vect^{G^0}(\Omega)$ whose objects are the finite objects, and $\Vect^{G}(\Omega)_{G^0\text{-}\fin}$ for the full subcategory of $\Vect^{G}(\Omega)$ whose objects are those that restrict to a finite object of $\Vect^{G^0}(\Omega)$.
\end{defn}

\begin{thm}\label{thm:finiteVB2}
    The functors $\Phi_H^0$ and $\Phi_H$ induce equivalences
% https://q.uiver.app/#q=WzAsNCxbMCwwLCJcXFJlcF9LXntcXHNtfShIKSJdLFsxLDAsIlxcVmVjdF57R30oXFxPbWVnYSlfe0deMFxcdGV4dHstfVxcZmlufSJdLFswLDEsIlxcUmVwX0tee1xcc219KEhfMCkiXSxbMSwxLCJcXFZlY3Ree0deMH0oXFxPbWVnYSlfe1xcZmlufSJdLFswLDEsIlxcUGhpX0giXSxbMCwyXSxbMiwzLCJcXFBoaV97SF8wfSJdLFsxLDNdLFszLDIsIlxcc2ltIiwwLHsic3R5bGUiOnsiYm9keSI6eyJuYW1lIjoibm9uZSJ9LCJoZWFkIjp7Im5hbWUiOiJub25lIn19fV0sWzEsMCwiXFxzaW0iLDAseyJzdHlsZSI6eyJib2R5Ijp7Im5hbWUiOiJub25lIn0sImhlYWQiOnsibmFtZSI6Im5vbmUifX19XV0=
\[\begin{tikzcd}
	{\Rep_K^{\sm}(H)} & {\Vect^{G}(\Omega)_{G^0\text{-}\fin}} \\
	{\Rep_K^{\sm}(H_0)} & {\Vect^{G^0}(\Omega)_{\fin}}
	\arrow["{\Phi_H}", from=1-1, to=1-2]
	\arrow[from=1-1, to=2-1]
	\arrow["\sim", draw=none, from=1-2, to=1-1]
	\arrow[from=1-2, to=2-2]
	\arrow["{\Phi_{H}^0}", from=2-1, to=2-2]
	\arrow["\sim", draw=none, from=2-2, to=2-1]
\end{tikzcd}\]
\end{thm}

\begin{proof}
   Given $V \in \Rep_K^{\sm}(H_0)$, $\Phi^0_H(V)$ is finite: $\sV$ satisfies the required polynomial relation by \cite[Thm.\ 6.1(4)]{TAY3}, and for any complete extension $L / K$, $\Phi^0_H$ commutes with base change by \cite[Lem.\ B.2]{TAY3} and so
   \[
   \sC(\Phi^0_H(V)_L) = \sC(\Phi^0_{H, L}(V_L)) \subset \Phi^0_{H,L}(\Rep_L^{\sm}(H)) \subset \Vect^{G^0}(\Omega_L),
   \]
   the essential image of $\Phi^0_{H,L}$ being closed under direct sums, duals, tensor products and sub-quotients.

    Conversely given $\sV \in \Vect^{G^0}(\Omega)$ which is finite, we may reason following \cite[\S 7.11]{TAY3}. For any complete field extension $L / K$ the full subcategory $\sC(\sV_L)$ is abelian by assumption, and being contained in $\Vect^{G^0}(\Omega_L)$ can be viewed as a Tannakian category by fixing an $L$-rational point $z$ of $\Omega_L$ and using the fibre functor
\[
\omega_z \colon \sC(\sV_L) \rightarrow \Vect_L, \qquad \omega_z \colon \sV \mapsto \sV(z) \coloneqq \sV_z \otimes_{\OO_{\Omega_L,z}} k(z).
\]
    Here we use the condition that $\sC(\sV_L) \subset \Vect^{G^0}(\Omega_L)$ to ensure that $\omega_z$ is exact. In particular, when $L = C$ is a complete algebraically closed extension of $K$ we can follow exactly the same reasoning as \cite[Prop.\ 5.8]{TAY3} to see that $\sC(\sV_C)$ is equivalent to the category of $C$-representations of a constant group scheme, and use this to build a $G^0$-equivariant finite \'{e}tale Galois covering of $f \colon Z \rightarrow \Omega_C$ with $\sV_C$ a direct summand of $f_* \OO_Z$. One can then reason exactly as in the proof of \cite[Thm.\ 7.18]{TAY3}: using the Scholze-Weinstein factorisation theorem \cite[Thm.\ 7.3.1]{SW} one can realise $\sV_C$ as an object in the essential image of $\Phi^0_{H,C}$, and from this deduce that $\sV$ is in the essential image of $\Phi^0_H$ from the compatibility of $\Phi^0_H$ with base change.

    The description of the essential image of $\Phi_H$ follows from that of $\Phi_H^0$ as in \cite[Cor.\ 7.19]{TAY3}.
\end{proof}

From Theorem \ref{thm:finiteVB2} and Theorems A and B of \cite{TAY3}, we immediately have the following.

\begin{cor}\label{cor:forgetfulequiv}
The forgetful functors
\begin{align*}
    \VectCon^{G}(\Omega)_{G^0\text{-}\fin} &\xrightarrow{\sim} \Vect^{G}(\Omega)_{G^0\text{-}\fin}, \\
    \VectCon^{G^0}(\Omega)_{\fin} &\xrightarrow{\sim} \Vect^{G^0}(\Omega)_{\fin}
\end{align*}
are equivalences of categories.
\end{cor}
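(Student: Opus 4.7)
The plan is to deduce this directly from Theorem \ref{thm:finiteVB} together with the analogous equivalence proved in the earlier paper \cite{TAY3} for vector bundles with connection, using a standard two-out-of-three argument.

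First I would recall that, by \cite{TAY3}, the analogues of $\Phi_H$ and $\Phi_{H_0}$ on the $\VectCon$-side, namely
\[
\Phi_H^{\nabla} \colon \Rep_K^{\sm}(H) \xrightarrow{\sim} \VectCon^{G}(\Omega)_{G^0\text{-}\fin}, \qquad
\Phi_{H_0}^{\nabla} \colon \Rep_K^{\sm}(H_0) \xrightarrow{\sim} \VectCon^{G^0}(\Omega)_{\fin},
\]
are equivalences of categories (these are the connection-enriched versions obtained from the equivalence $\Psi_H^{\nabla}$ mentioned in the introduction, together with the $V \mapsto V^*$ identification noted in Remark \ref{rem:dualfunctorcompat}). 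The key point is that the functors $\Phi_H$ and $\Phi_H^{\nabla}$ are constructed from the same pushforward $f_* \OO_{\sM_{\infty}}$; one simply forgets the $\sD$-module structure in the final step. Consequently the diagram
\[
\begin{tikzcd}
	\Rep_K^{\sm}(H) \ar[r, "\Phi_H^{\nabla}"] \ar[dr, "\Phi_H"'] & \VectCon^{G}(\Omega)_{G^0\text{-}\fin} \ar[d, "\text{forget}"] \\
	& \Vect^{G}(\Omega)_{G^0\text{-}\fin}
\end{tikzcd}
\]
commutes (up to a canonical natural isomorphism), and analogously for the triangle involving $\Phi_{H_0}^{\nabla}$, $\Phi_{H_0}$, and the forgetful functor in the $G^0$-case.

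Since both $\Phi_H^{\nabla}$ and $\Phi_H$ are equivalences (the latter by Theorem \ref{thm:finiteVB}), the forgetful functor $\VectCon^{G}(\Omega)_{G^0\text{-}\fin} \to \Vect^{G}(\Omega)_{G^0\text{-}\fin}$ is an equivalence by two-out-of-three. The same argument with $(H_0, G^0)$ in place of $(H, G)$ gives the second claimed equivalence.

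The only step that requires any care is checking the commutativity of the two triangles, i.e.\ that forgetting the connection on $\Phi_H^{\nabla}(V)$ reproduces $\Phi_H(V)$ naturally in $V$. This is essentially by construction, since in both cases the underlying sheaf is $\Hom_H(V, f_* \OO_{\sM_{\infty}})$ and the $\sD$-module structure on the $\VectCon$-side comes from the $\sD$-action on $\OO_{\sM_{\infty}}$, which plays no role once it is forgotten. This is the only genuine point to verify; once it is in hand, the corollary is immediate.
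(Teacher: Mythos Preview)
Your proposal is correct and is precisely the argument the paper has in mind: the corollary is stated without proof immediately after Theorem \ref{thm:finiteVB}, and the intended deduction is exactly the two-out-of-three argument you give, using the $\VectCon$-side equivalences from \cite{TAY3} together with the commutativity of the forgetful triangle.
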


The rank $1$ finite objects of $\VectCon^{G^0}(\Omega)$ are the torsion line bundles \cite[Lem.\ 5.5]{TAY3}. We can show this is also true in $\Vect^{G^0}(\Omega)$ with our slightly modified definition of finiteness.

\begin{prop}\label{prop:torsionLB}
Suppose that $\sL \in \Vect^{G^0}(\Omega)$ has rank $1$. Then $\sL$ is finite if and only if $\sL^{\otimes m} \cong \OO_{\Omega}$ for some $m \geq 1$.
\end{prop}

\begin{proof}
    If $\sL$ is finite, then applying \cite[Lem.\ 5.5]{TAY3} to the Tannakian category $\sC(\sL)$ (after choosing an $K$-point of $\Omega$) we have that $\sL^{\otimes m} \cong \OO_{\Omega}$ for some $m \geq 1$.
    
    Conversely, suppose that $\sL^{\otimes m} \cong \OO_{\Omega}$ for some $m \geq 1$, so $f(\sL) = g(\sL)$ for $f(x) = x^m$ and $g(x) = 1$. To verify the second condition, let $L$ be a complete extension of $K$, and note that as in the first part of the proof of Theorem \ref{thm:finiteVB2}, to show that every object of $\sC(\sL_L)$ is a vector bundle it is enough to show that $\sL_L$ is in the essential image of $\Phi^0_{H,L}$.
    
    Set $L_m \coloneqq L(\mu_m)$ be the finite extension of $L$ defined by adjoining the $m$th roots of $1$ to $L$, and fix an isomorphism $\alpha \colon \sL_{L_m}^{\otimes m} \xrightarrow{\sim} \OO_{\Omega_L}$. To this pair $(\sL_{L_m}, \alpha)$, we may follow the construction of \cite[pg. 11]{TAY2} (appropriately adding $G^0$-equivariant structure) to produce a $G^0$-equivariant finite \'{e}tale Galois covering $f \colon Z \rightarrow \Omega_{L_m}$ with Galois group $\mu_m(L_m)$ and with $\sL_{L_m}$ a direct summand of $f_* \OO_Z$.
    
    The proof of \cite[Thm.\ 7.18]{TAY3} starts with exactly such a covering, and thus by the same proof $\sL_{L_m}$ is in the essential image of $\Phi^0_{H,L_m}$. Then $\sL_L$ itself is in the essential image of $\Phi^0_{H,L}$, using the compatibility of $\Phi^0_{H,L}$ with base change as in the final part of the proof of \cite[Thm.\ 7.18]{TAY3}.
\end{proof}

\section{Properties of $\Phi_H$}\label{sect:properties}

For any irreducible $V \in \Rep_{\sm}(H)$ with $\dim(V) > 1$, it is expected that
\[
\HH_{\dRc}^{n-1}(\Omega, \Phi_H(V)) \cong \JL(V)^{\oplus n}
\]
in $\Rep^{\sm}_K(G)$ (when $K$ is algebraically closed), where
\[
\JL \colon \Irr^{\sm}_K(H) \xrightarrow{\sim} \Irr^{\sm, \ESI}_K(G)
\]
is the Jacquet-Langlands correspondence from smooth irreducible representations of $H$ to irreducible smooth essentially square-integrable representations of $G$. For example, this is known in dimension $1$ for $\GL_2(F)$ \cite[Thm.\ 0.4]{CDN1}, and in any dimension for certain level $0$ representations of $H$ corresponding to the first Drinfeld covering $\sM_1$ \cite[Thm.\ A]{JUNDR}.

It is therefore natural to ask which properties of the Jacquet-Langlands correspondence can already be seen from the properties of the functor $\Phi_H$. For example, the Jacquet-Langlands correspondence satisfies three natural properties:
\begin{enumerate}
    \item $\JL(V^*) = \JL(V)^*$ ,
    \item $\omega_{V} = \omega_{\JL(V)}$,
    \item $\JL(\chi_H \otimes V) = \chi_G \otimes \JL(\rho)$,
\end{enumerate}
where $\omega_V$ denotes the central character of $V$, $\chi \colon F^\times \rightarrow K^\times$ is a smooth character of $F^\times$, and 
\begin{align*}
    (-)_G &\colon \Char^{\sm}_K(F^\times) \xrightarrow{\sim} \Char^{\sm}_K(G), \qquad \chi_G = \chi \circ \det, \\
    (-)_H &\colon \Char^{\sm}_K(F^\times) \xrightarrow{\sim} \Char^{\sm}_K(H), \qquad \chi_H = \chi \circ \Nrd,
\end{align*}
are the associated smooth characters of $G$ and $H$ respectively. These are well-defined bijections because both $\det \colon G \rightarrow F^{\times}$ and $\Nrd \colon H \rightarrow F^{\times}$ are continuous, surjective, and are abelianizations of $G$ and $H$ respectively (by \cite[Thm.\ 8.3, Thm.\ 9.2]{LANG} and \cite[\S 1.4.3]{PRR}).

This motivates the following Theorem, which establishes that the corresponding properties hold true of the functor $\Phi_H$ (which we consider as a functor to either $\VectCon^G(\Omega)$ or $\Vect^G(\Omega)$).

\begin{thm}\label{thm:twists}
For any $V \in \Rep_K^{\sm}(H)$,
\begin{enumerate}
    \item $\Phi_H(V^*) = \Phi_H(V)^*$ ,
    \item If $V$ admits a central character $\omega_V$, then $\omega_{V} = \omega_{\Phi_H(V)}^*$,
    \item For any smooth character $\chi \colon F^\times \rightarrow K^\times$,
    \[
    \Phi_H(V \otimes \chi_H) = \Phi_H(V) \otimes \chi_G^*.
    \]
\end{enumerate}
\end{thm}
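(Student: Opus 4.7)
The plan is to establish the three parts in order, with (3) as the main content. Part (1) follows from the monoidality of $\Psi_H$: by its construction via Theorem \ref{thm:finalmainthmLT} out of the monoidal fully faithful functors of Proposition \ref{prop:mainprop} and Corollary \ref{cor:repsfunctors}, the equivalence $\Psi_H \colon \Rep_K^{\sm}(H) \xrightarrow{\sim} \Vect_{\LT}^G(\Omega)$ is a monoidal equivalence of rigid symmetric monoidal categories, and hence preserves duals, so $\Psi_H(V)^* \cong \Psi_H(V^*)$. Combined with $\Phi_H(V) \cong \Psi_H(V^*)$ from Remark \ref{rem:dualfunctorcompat} and $V^{**} \cong V$, this yields $\Phi_H(V^*) \cong \Psi_H(V) \cong \Psi_H(V^*)^* \cong \Phi_H(V)^*$. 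For part (2) I would compute directly on the explicit description $\Psi_H(V) = (\OO_{\sM_m} \otimes_K V)^{H/H_m}$ of Remark \ref{rem:explicitformGH}: a central element $z \in Z(G) = F^\times$ acts on $\sM_m$ as the corresponding scalar in $\GL_n(F)$, shifting connected components by $\nu_\pi(z)$ and acting on $K_m \subset \OO(\sM_m)$ through the Lubin--Tate character; after taking $H/H_m$-invariants and intertwining with the $H$-action this produces the scalar $\omega_V(z)$ on $\Psi_H(V)$. Translating through Remark \ref{rem:dualfunctorcompat} yields $\omega_V = \omega_{\Phi_H(V)}^*$.

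For part (3), I would use Theorem \ref{thm:finalmainthmLT} to recast the statement as a comparison of smooth $H$-representations. The bundle $\Psi_H(V) \otimes \chi_G$ is Lubin--Tate, since $\Psi_H(V)$ is Lubin--Tate by construction and $\chi_G \cong \Psi_G(\chi_G)$ is Lubin--Tate by Lemma \ref{lem:explicitformofGH}, and the Lubin--Tate property is preserved by tensor product. Hence by Theorem \ref{thm:finalmainthmLT} there exists a unique $W \in \Rep_K^{\sm}(H)$ with $\Psi_H(W) \cong \Psi_H(V) \otimes \chi_G$, and it suffices to show $W \cong V \otimes \chi_H$. Reducing to $V$ irreducible, both $W$ and $V \otimes \chi_H$ are irreducible, and their central characters agree: by part (2) and $\det|_{Z(G)} = \Nrd|_{Z(H)} = (-)^n$,
\[
\omega_W = \omega_{\Psi_H(V)} \cdot \chi_G|_{Z(G)} = \omega_V \cdot \chi^n = \omega_V \cdot \chi_H|_{Z(H)} = \omega_{V \otimes \chi_H}.
\]
Since $H = D^\times$ is anisotropic modulo its centre, every non-central regular semisimple element of $H$ is elliptic, so smooth finite-dimensional $H$-representations are determined by central character together with their values on regular elliptic elements.

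The remaining match on regular elliptic elements is the main obstacle, and this is where Kohlhaase's trace formula \cite[Thm.\ 4.7]{KOH} plays its essential role. For a regular elliptic $g \in G$ with appropriate fixed point $x \in \Omega$, the formula expresses $\operatorname{tr}(g \mid \Psi_H(V)_x) = c(x,g)\,\chi_V(\gamma_g)$, where $\gamma_g \in H$ is the Jacquet--Langlands-matched element (satisfying $\det g = \Nrd \gamma_g$) and $c(x,g)$ is a factor independent of $V$. Since $\chi_G$ acts on its own stalk at $x$ as the scalar $\chi_G(g) = \chi(\det g)$, applying the formula to both $\Psi_H(W) \cong \Psi_H(V) \otimes \chi_G$ and $\Psi_H(V)$ and cancelling $c(x,g)$ yields
\[
\chi_W(\gamma_g) = \chi(\det g)\,\chi_V(\gamma_g) = \chi(\Nrd \gamma_g)\,\chi_V(\gamma_g) = \chi_{V \otimes \chi_H}(\gamma_g),
\]
which together with the central character agreement gives $W \cong V \otimes \chi_H$ and hence $\Psi_H(V \otimes \chi_H) \cong \Psi_H(V) \otimes \chi_G$. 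The enrichment $\Psi_H^{\nabla}$ is used essentially here to rigidify the $\sD$-module structure on $\Psi_H(V)$ that Kohlhaase's trace formula depends on: via the equivalence $\VectCon^G(\Omega)_{G^0\text{-}\fin} \xrightarrow{\sim} \Vect^G(\Omega)_{G^0\text{-}\fin}$ of Theorem B (Corollary \ref{cor:forgetfulequiv}), the trivial connection on $\chi_G$ extends uniquely to the correct $\sD$-module structure on $\Psi_H(V) \otimes \chi_G$, and I expect the bulk of the technical work to lie in checking that Kohlhaase's statement is compatible with this twist, i.e.\ that $\operatorname{tr}(g \mid (\Psi_H(V) \otimes \chi_G)_x) = \chi_G(g)\operatorname{tr}(g \mid \Psi_H(V)_x)$ at the relevant fixed point.
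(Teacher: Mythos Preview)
Your parts (1) and (2) are essentially the paper's arguments: (1) is the monoidality of $\Phi_H$ (the paper cites this directly from \cite[Rem.\ 7.14]{TAY3}), and (2) is the observation that the diagonal $F^\times \hookrightarrow G \times H$ acts trivially on $\sM_m$, which is exactly what your ``intertwining with the $H$-action'' unpacks to.

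Your part (3), however, takes a genuinely different route from the paper. The paper first uses monoidality of $\Phi_H$ to reduce to the case $V = 1$, i.e.\ to the single identity $\Phi_H(\chi_H) \cong \OO_\Omega \otimes \chi_G^*$. It then identifies the underlying $\sD$-module of $\Phi_H(\chi_H)$ with $\OO_\Omega$ by base-changing to a field containing $\breve{F}_m$ (where $\chi_H|_{\SL_1(D)} = 1$ forces triviality via \cite[Thm.\ C]{TAY3}) and descending via an injectivity statement for $\PicCon(\Omega)$; this is where the $\sD$-module enrichment is essential for the paper. Only then does it invoke Kohlhaase's trace formula, on a one-dimensional stalk, together with surjectivity of $\Nrd$ on $H_{\reg}^{\ellip}$, to pin down the twist character. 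By contrast, you keep $V$ general and apply Kohlhaase's formula \emph{twice}, to $\Psi_H(W)$ and $\Psi_H(V)$, cancel, and conclude $\chi_W = \chi_{V\otimes\chi_H}$ on $H_{\reg}^{\ellip}$; density of the latter and local constancy of characters finish the argument. Your approach is more direct and, notably, does \emph{not} actually require the $\sD$-module enrichment at all: the identity $\operatorname{tr}(g \mid (\Psi_H(V) \otimes \chi_G)_x) = \chi_G(g)\operatorname{tr}(g \mid \Psi_H(V)_x)$ is immediate from the tensor-product equivariant structure on stalks, so your final paragraph overstates the difficulty and misidentifies where the work lies. What your approach does implicitly use (and you should make explicit) is that every regular elliptic conjugacy class of $H$ arises as some $\gamma_g$, which is clear since any monic irreducible polynomial of degree $n$ over $F$ is the characteristic polynomial of its companion matrix in $G$. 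The paper's approach buys a sharper structural statement (the underlying $\sD$-module of $\Phi_H(\chi_H)$ is trivial) at the cost of the $\PicCon$ base-change lemma; yours buys simplicity at the cost of invoking Kohlhaase's formula in its full strength for arbitrary $V$.
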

\begin{remark}
Here, for $\sV \in \VectCon^{\GL_n(F)}(\Omega)$ with endomorphism ring $K$, $\omega_{\sV}$ is the character
\[
\omega_{\sV} \coloneqq (-)^{\sV} \colon F^\times \equiv Z(\GL_n(F)) \rightarrow \Aut(\sV) = K^\times,
\]
where for $g \in \GL_n(F)$, $g^{\sV} \colon \sV \rightarrow g^{-1} \sV$ is the equivariant structure morphism, which is an automorphism of $\sV$ whenever $g$ is central, because $g$ is central and acts trivially on $\Omega$.
\end{remark}

\begin{remark}
    We do not prove it here, but the analogue of Part (3) of Theorem \ref{thm:twists} for $\Psi_H^0$ is also true: for any smooth character $\lambda \colon \OO_F^{\times} \rightarrow K^\times$, $\Psi_H^0(V \otimes \lambda_H) = \Psi_H^0(V) \otimes \lambda_G$ (cf.\ Remark \ref{rem:commchar0}).
\end{remark}

\begin{remark}
     In particular, as
\[
H^{n-1}_{\dRc}(\Omega, \OO_{\Omega}) = \text{St}_G = \JL(1)
\]
\cite[\S 3 Thm.\ 1, \S 4 Lem.\ 1]{SchStu}, this shows that for any smooth character $\chi \colon F^\times \rightarrow K^{\times}$,
\[
H^{n-1}_{\dRc}(\Omega, \Phi_H(\chi_H)) = \text{St}_G \otimes \chi_G = \JL(\chi_H),
\]
which complements the expectation outlined above in the case that $\dim(V) = 1$.
\end{remark}

\begin{proof}
    The first point follows from the fact that $\Phi_H$ commutes with duals \cite[Rem.\ 7.14]{TAY3}. For the second point, suppose $\lambda \in F^\times$, write $\iota \colon F^\times \hookrightarrow H$, $d \colon F^\times \hookrightarrow G$ for the canonical inclusions, and let $m \geq 1$ be the level of $V$. We want to show that the image of the action of $\iota(\lambda) \in H$ on $V$ under $\Aut(V) \xrightarrow{\sim} \Aut(\Phi_H(V))$ is the inverse of the action of $d(\lambda) \in G$ on $\Phi_H(V)$, so is equal to $d(\lambda^{-1})^{\Phi_H(V)}$. For a local section $[\phi \colon V \rightarrow \OO_{\sM_m}(f^{-1}(U))] \in \Phi_H(V)(U)$, the functorially induced action of $\iota(\lambda)_V$ on $\phi$ is $\phi \circ \iota(\lambda)_V$ which equals $\iota(\lambda)^{\OO_{\sM_m}}_{f^{-1}(U)} \circ \phi$ as $\phi$ is $H$-equivariant. On the other hand, 
       \begin{align*}
       d(\lambda^{-1})^{\Phi_H(V)}_{U}(\phi) &= d(\lambda^{-1})^{\OO_{\sM_m}}_{f^{-1}(U)} \circ \phi, \\
       &= \iota(\lambda)^{\OO_{\sM_m}}_{f^{-1}(U)} \circ \phi,
       \end{align*}
    the first equality by definition and the second because the diagonal subgroup $F \hookrightarrow G \times H$, $\lambda \mapsto (\iota(\lambda), d(\lambda))$ acts trivially on $\sM_m$. For the third point, as $\Phi_H$ is monoidal it is sufficient to show that
    \[
    \Phi_H(\chi_H) = \Phi_H(1) \otimes \chi_G^* =  \OO_{\Omega} \otimes_K \chi_G^*.
    \]
    Writing $m \geq 1$ for the level of $\chi$, set $L$ to be any finite extension of $K$ which contains $\breve{F}_m$. Over $L$ the covering $\sM_{m,L}$ breaks up into geometrically connected components, and therefore the diagram of \cite[Thm.\ C]{TAY3} commutes on representations of level $m$ (which can be seen from the proof, which takes place at each level). In particular, as the restriction of $\chi_H \otimes_K L$ to $\SL_1(D)$ is trivial, $\Phi_H(\chi_H \otimes_K L) \cong \OO_{\Omega, L}$ as $\sD$-modules on $\Omega_L$. We have that 
    \[
    \Phi_H(\chi_H \otimes_K L) = \Phi_H(\chi_H)_L,
    \]
    as $\Phi_H$ commutes with base change \cite[Lem.\ B.2]{TAY3}, and thus by Lemma \ref{lem:basechangeinj} below, $\Phi_H(\chi_H) \cong \OO_{\Omega}$ as $\sD$-modules over $K$. Consequently, $\Phi_H(\chi_H) = \OO_{\Omega} \otimes_K \psi_G$ for some character $\psi$ of $F^\times$ by \cite[Prop. 3.2.14]{AW2} and the fact that $\SL_n(F)$ is perfect \cite[Thm.\ 8.3, Thm.\ 9.2]{LANG}, and we are reduced to showing that $\psi = \chi^*$.
    
    Let $C$ be a complete algebraically closed extension of $K$. If $g \in G_{\reg}^{\ellip}$ and $h \in H_{\reg}^{\ellip}$ are regular elliptic elements with the same characteristic polynomial, then for any $x \in \Omega(C)$ which is fixed by $g$ (which exists as $g$ is regular elliptic), there is an equality
    \[
    \tr_{\Psi_H(\chi_H) \otimes \kappa(x)}(g) = \tr_{\chi_H}(h) = \chi(\Nrd(h))
    \]
    by \cite[Thm.\ 4.7]{KOH} and Lemma \ref{lem:explicitformofGH}, noting that although \cite{KOH} is written in the case that $K = \breve{F}$, all constructions generalise without issue to the relative setting. We have that $\Psi(\chi_H) = \Phi_H(\chi_H^*)$ by Remark \ref{rem:dualfunctorcompat}, and the left-hand side is the trace of the action of $g$ on the $k(x)$-vector space
    \[
    \Phi_H(\chi_H^*) \otimes \kappa(x) = (\OO_{\Omega} \otimes_K \psi_G^* ) \otimes \kappa(x) = \kappa(x) \otimes_K \psi_G^*,
    \]
    or in other words $\psi^*(\det(g))$. As $g$ and $h$ have the same characteristic polynomial, $\det(g) = \Nrd(h)$, so $\chi(\Nrd(h)) = \psi^*(\Nrd(h))$, from which the fact that $\chi = \psi^*$ follows from the Lemma \ref{lem:surjregell} below.
\end{proof}

\begin{lemma}\label{lem:basechangeinj}
    For any finite extension $L$ of $K$, the base change homomorphism
    \[
     (-)_L \colon \PicCon(\Omega) \rightarrow \PicCon(\Omega_L)
    \]
    is injective.
\end{lemma}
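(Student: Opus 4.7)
The plan is to use Galois descent combined with Hilbert 90. First, by enlarging $L$ inside a finite Galois extension $L'/K$ and observing that $(\sL,\nabla)_{L'} = ((\sL,\nabla)_L)_{L'}$ is trivial whenever $(\sL,\nabla)_L$ is, we may reduce to the case that $L/K$ is finite Galois with group $\Gamma \coloneqq \Gal(L/K)$.

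Suppose $(\sL,\nabla) \in \PicCon(\Omega)$ has trivial image in $\PicCon(\Omega_L)$, and fix a horizontal nowhere-vanishing global section $s \in \Gamma(\Omega_L, \sL_L)$. The natural action of $\Gamma$ on $\Omega_L$ over $\Omega$ lifts canonically to a $\Gamma$-equivariant structure on $\sL_L$ which is compatible with the connection. For each $\sigma \in \Gamma$, $\sigma(s)$ is again a horizontal generator of $\sL_L$, so there is a unique unit $\lambda_\sigma \in \Gamma(\Omega_L, \OO_{\Omega_L})^{\nabla_L=0}$ with $\sigma(s) = \lambda_\sigma \cdot s$, and the assignment $\sigma \mapsto \lambda_\sigma$ is a $1$-cocycle.

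The crucial input is the identification $\Gamma(\Omega_L, \OO_{\Omega_L})^{\nabla_L = 0} = L$. Since $\Omega$ is a Zariski-open in $\bP^{n-1}_K$, it is geometrically connected over $K$, so $\Omega_L$ is connected; consequently horizontal global functions on $\Omega_L$ are constant, and $c(\Omega_L) = L$. Thus the cocycle takes values in $L^\times$, and Hilbert 90 supplies $\mu \in L^\times$ with $\lambda_\sigma = \sigma(\mu)\mu^{-1}$ for all $\sigma \in \Gamma$. The rescaled section $t \coloneqq \mu^{-1} s$ is then $\Gamma$-invariant, horizontal, and nowhere vanishing on $\Omega_L$; by Galois descent along the finite \'{e}tale cover $\Omega_L \to \Omega$ it descends to a horizontal nowhere-vanishing global section of $\sL$, trivializing $(\sL, \nabla)$ in $\PicCon(\Omega)$.

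The main technical point is the identification $c(\Omega_L) = L$, which reduces to the geometric connectedness of $\Omega$; once this is in hand everything else is a routine Hilbert 90 and Galois-descent argument. I expect no further obstacles.
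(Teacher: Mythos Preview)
Your proof is correct, but takes a different route from the paper. The paper argues directly that the $K$-vector space of horizontal global sections base-changes well: for any $(\sL,\nabla)\in\PicCon(\Omega)$ and any finite extension $L/K$, there is a natural isomorphism
\[
\sL(\Omega)^{\nabla=0}\otimes_K L \xrightarrow{\ \sim\ } \sL_L(\Omega_L)^{\nabla_L=0},
\]
obtained by adapting the argument of \cite[Lem.\ 3.8]{TAY3} with $c_X$ replaced by $\Hom_{\sD}(\OO,\sL)$. Injectivity then follows immediately from the criterion \cite[Cor.\ 3.1.7]{AW2} that a line bundle with connection is trivial if and only if it has a non-zero horizontal global section. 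By contrast, you reduce to the Galois case and run a classical Hilbert~90 / descent argument. Both proofs ultimately hinge on the same underlying fact (geometric connectedness of $\Omega$, giving $c(\Omega_L)=L$), but the paper's version works uniformly for all finite $L/K$ without the reduction step and yields the slightly stronger base-change statement for horizontal sections, while yours is more self-contained and avoids the external references to \cite{TAY3} and \cite{AW2}.
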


\begin{proof}
    For any line bundle with connection on $\Omega$ and any finite extension $L$ of $K$, there is an isomorphism
    \[
    \sL(\Omega)^{\sT(\Omega) = 0} \otimes_K L \xrightarrow{\sim} \sL_L(\Omega)^{\sT(\Omega_L) = 0}.
    \]
    This follows from the same argument given in the proof of \cite[Lem.\ 3.5]{TAY3} with the sheaf $c_X$ replaced by $\Hom_{\sD}(\OO, \sL)$, the sections of which are described by \cite[Lem.\ 3.1.4]{AW2}. Using this, we see that the base change homomorphism
    \[
     (-)_L \colon \PicCon(\Omega) \rightarrow \PicCon(\Omega_L)
    \]
    is injective by \cite[Cor.\ 3.1.7]{AW2}, which shows that a line bundle with connection is trivial if and only if it possesses a non-zero global horizontal section. 
\end{proof}

\begin{lemma}\label{lem:surjregell}
    The reduced normal map induces a surjection
    \[
    \Nrd \colon H_{\reg}^{\ellip} \rightarrow F^\times.
    \]
\end{lemma}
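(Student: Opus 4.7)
The plan is to construct, for each $\lambda \in F^\times$, an explicit regular elliptic element $h \in H$ with $\Nrd(h) = \lambda$ using the cyclic algebra description of $D$. Fix the degree-$n$ unramified maximal subfield $L \subset D$ with Frobenius $\sigma \in \Gal(L/F)$, and an element $\Pi \in D$ such that $\Pi^n = \pi$ and $\Pi x \Pi^{-1} = \sigma(x)$ for $x \in L$. Write $\lambda = \pi^a v$ with $a \in \bZ$ and $v \in \OO_F^\times$, set $d = \gcd(a,n)$, $n' = n/d$, $a' = a/d$ (using the convention $d = n$, $n' = 1$, $a' = 0$ if $a = 0$), and let $L_d \subset L$ denote the unramified subextension of degree $d$ over $F$. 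I will seek $h$ of the form $\ell \Pi^a$ for a suitably chosen $\ell \in \OO_L^\times$.

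A direct computation using $\Pi \ell \Pi^{-1} = \sigma(\ell)$ yields $h^{n'} = N_{L/L_d}(\ell)\, \pi^{a'} \in L_d$. Since this element has $L_d$-valuation $a'$ coprime to $n'$, the polynomial $X^{n'} - h^{n'}$ is irreducible over $L_d$, giving $[L_d[h] : L_d] = n'$ and $[L_d[h] : F] = n$; one then checks that $F[h] = L_d[h]$ if and only if $F[h^{n'}] = L_d$, equivalently iff $N_{L/L_d}(\ell)$ generates $L_d$ over $F$. On the other hand, $\Nrd(\Pi) = (-1)^{n+1}\pi$ and multiplicativity of $\Nrd$ give $\Nrd(h) = N_{L/F}(\ell) \cdot (-1)^{a(n+1)} \pi^a$, so the condition $\Nrd(h) = \lambda$ becomes $N_{L/F}(\ell) = (-1)^{a(n+1)} v =: v^\star \in \OO_F^\times$.

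The problem thus reduces to finding $\ell$ in the fiber $X := N_{L/F}^{-1}(v^\star) \cap \OO_L^\times$ whose image $N_{L/L_d}(\ell)$ generates $L_d$ over $F$. This fiber is non-empty by the surjectivity of $N_{L/F}$ on unramified unit groups, and is a smooth $F$-subvariety of $\OO_L^\times$ of $F$-dimension $n-1$, being a torsor under $\ker(N_{L/F}|_{\OO_L^\times})$. The ``bad'' locus --- where $N_{L/L_d}(\ell)$ lies in some proper subfield of $L_d$ --- is a closed subvariety of strictly smaller $F$-dimension, since the surjection $N_{L/L_d} \colon X \twoheadrightarrow N_{L_d/F}^{-1}(v^\star)$ has equidimensional fibers of dimension $n - d$, and the proper subfields of $L_d$ have $F$-dimension at most $d/p$ with $p$ the smallest prime dividing $d$.

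The main technical point is producing $F$-rational points in the good locus, which will follow from smoothness of $X$ (non-empty $p$-adically open subsets of a smooth $F$-variety with an $F$-point possess $F$-points). The boundary case $d = 2$, where the dimension bound above is tight, is handled separately by observing that $F \cap N_{L_d/F}^{-1}(v^\star)$ consists of at most two square roots of $v^\star$ in $F$, while the fiber has positive $F$-dimension and hence contains elements outside $F$.
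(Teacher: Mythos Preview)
Your proof is correct but takes a completely different route from the paper's.

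The paper gives a short non-constructive argument: since $D$ is a division algebra, the reduced characteristic polynomial of any $h\in H$ is a power of its minimal polynomial, so regular already implies regular elliptic and $H_{\reg}^{\ellip}=H_{\reg}$. The regular locus is dense in $H$, and the composite $H\to F^\times\to F^\times/F^{\times n}$ is locally constant (as $F^{\times n}$ is open), so the dense subset $H_{\reg}$ already surjects onto $F^\times/F^{\times n}$. Since $H_{\reg}$ is stable under multiplication by $F^\times\subset H$ and $\Nrd(a)=a^n$ for $a\in F^\times$, one then corrects within a coset of $F^{\times n}$ to hit any prescribed $\lambda$ exactly. Four lines, no cyclic-algebra bookkeeping.

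Your approach is explicit: via the cyclic presentation of $D$ you write down candidates $h=\ell\,\Pi^a$ and reduce to finding $\ell$ in a norm fibre whose intermediate norm $N_{L/L_d}(\ell)$ generates $L_d$, which you settle by a dimension count on the norm-one torus torsor. This is substantially longer but yields concrete regular elliptic elements --- equivalently, explicit monic irreducible polynomials over $F$ of degree $n$ with prescribed constant term, which is exactly the reformulation the paper records in its closing remark. Two minor comments: your ``if and only if'' is in fact correct in both directions (for $(\Rightarrow)$ use $[F[h]:F[h^{n'}]]\le n'$ together with $F[h^{n'}]\subseteq L_d$), though only $(\Leftarrow)$ is needed; and your parenthetical about ``$p$-adically open subsets'' is phrased oddly --- what you actually use is that the $F$-points of a smooth geometrically irreducible $F$-variety with an $F$-point are Zariski-dense, so every nonempty Zariski-open meets them. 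The paper's density argument is certainly the more efficient one here.
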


\begin{proof}
    For $h \in H$, the reduced characteristic polynomial $f_h$ of $h$ is a power of the minimal polynomial $h$ over $F$ \cite[Lem.\ IV.2.4]{CSA}. In particular, if $f_h$ has distinct roots over $\overline{F}$ then $f_h$ is irreducible, and so any regular element of $H$ is regular elliptic \cite[\S 1.1]{BHJL}: $H_{\reg}^{\ellip} = H_{\reg}$. The set of $H_{\reg}$ is dense in $H$ \cite[Lem.\ A.3]{BHLTL}, and therefore the same is true for $H_{\reg}^{\ellip}$. The $n$th power subgroup $F^{\times n}$ of $F^\times$ is open, and hence
    \[
    H_{\reg}^{\ellip} \xrightarrow{\Nrd} F^\times \rightarrow F^{\times} / F^{\times n}
    \]
    is continuous to a discrete space and thus surjective, as $H_{\reg}^{\ellip}$ is dense in $H$ and $\Nrd \colon H \rightarrow F^\times$ is surjective. Therefore, as the set $H_{\reg}^{\ellip}$ is closed under multiplication by $F^\times$ (which follows directly from the definition) and $\Nrd(a) = a^n$ for $a \in F^\times$, $\Nrd \colon H_{\reg}^{\ellip} \rightarrow F^\times$ is also surjective. 
\end{proof}

\begin{remark}
    In light of \cite[\S 1.3]{BHJL}, Lemma \ref{lem:surjregell} is equivalent to the claim that for any $n \geq 1$ and $a \in F^\times$, there is some monic irreducible polynomial over $F$ of degree $n$ with constant term $a$.
\end{remark}

\bibliography{biblio}{}
\bibliographystyle{plain}
\vspace{2em}
\end{document}